\documentclass[10pt,a4paper]{amsart}
\usepackage[utf8]{inputenc}
\usepackage{graphicx}
\usepackage{tikz}
\usepackage[normalem]{ulem}
\usepackage[bottom]{footmisc}

\usepackage{amsmath,amssymb,amsthm,mathtools}
\usepackage{mathabx}

\usepackage{hyperref}
\usepackage{enumerate}

\newtheorem{theorem}{Theorem}
\newtheorem*{theorem*}{Theorem}
\newtheorem{corollary}[theorem]{Corollary}
\newtheorem{lemma}[theorem]{Lemma}
\newtheorem{proposition}[theorem]{Proposition}
\newtheorem*{question*}{Question}

\theoremstyle{definition}

\newcommand\R{\mathbb{R}}

\newcommand\ES{\mathbb{S}}
\newcommand\Z{\mathbb{Z}}
\newcommand\N{\mathbb{N}}
\newcommand\C{\mathbb{C}}

\newcommand\T{\mathbb{T}}

\newcommand\cC{{\mathcal C}}
\newcommand\cD{{\mathcal D}}

\newcommand\cF{{\mathcal F}}
\newcommand\cG{{\mathcal G}}
\newcommand\cH{{\mathcal H}}
\newcommand\cK{{\mathcal K}}
\newcommand\cL{{\mathcal L}}

\newcommand\cS{{\mathcal S}}
\newcommand\cT{{\mathcal T}}
\newcommand\cU{{\mathcal U}}

\newcommand\cW{{\mathcal W}}

\newcommand\ud{\mathrm{d}}
\newcommand\Sconf{\mathcal{S}^{\mathrm{conf}}}
\newcommand\meanr{\bar{r}}
\DeclareMathOperator{\supp}{supp}

\newcommand\id{\mathrm{id}}
\newcommand\e{\varepsilon}

\newcommand\rank{{\hbox{\rm rank}}\,}

\begin{document}

\title[Conformal Hamiltonian flows]{THE DYNAMICS OF  CONFORMAL HAMILTONIAN FLOWS: DISSIPATIVITY AND CONSERVATIVITY}

\author[S. Allais]{Simon Allais$^{\circ\dag}$}
\author[M.-C. Arnaud]{Marie-Claude Arnaud$^{\circ\dag\ddag}$}
\address{Simon Allais, Universit\'e  Paris Cit\'e and Sorbonne Universit\'e, CNRS, IMJ-PRG, F-75006 Paris, France.}
\email{simon.allais@imj-prg.fr}
\urladdr{https://webusers.imj-prg.fr/~simon.allais/}
\address{Marie-Claude Arnaud, Universit\'e  Paris Cit\'e and Sorbonne Universit\'e, CNRS, IMJ-PRG, F-75006 Paris, France.}
\thanks{\noindent$\circ $ Universit\'e   Paris Cit\'e  and Sorbonne Universit\'e, CNRS, IMJ-PRG, F-75006 Paris, France \\
$\dag$ ANR AAPG 2021 PRC CoSyDy: Conformally symplectic dynamics, beyond symplectic dynamics, ANR-CE40-0014\\
$\ddag$ Member of the {\sl Institut universitaire de France}}

\email{marie-Claude.Arnaud@imj-prg.fr}
\date{}
\keywords{Smooth mappings and diffeomorphisms, attractors of solutions to ordinary differential equations, attractors and repellers of smooth dynamical systems and their topological structure, flows related to symplectic and contact structures, invariant manifolds for ordinary differential
equations}
\subjclass[2020]{37C05,  34D45, 37C70, 53E50, 34C45}

\maketitle

\begin{abstract} We study in detail the dynamics of conformal Hamiltonian flows that are defined  on a conformal symplectic manifold (this notion was popularized by Vaisman in 1976). We show that they exhibit some conservative and dissipative behaviours. We also build many examples of various dynamics that show simultaneously their difference and resemblance with the contact and symplectic case.
\end{abstract}

\tableofcontents

\section{Introduction}
Symplectic dynamics models many conservative movements. Yet, other phenomena are dissipative and require another setting. This is the case of the damped mechanical systems: they are modelled by conformal Hamiltonian dynamics, which alter the symplectic form  up to a scaling factor.

This notion of  conformal symplectic dynamics can be placed in a broader context. To define such a dynamics, we only need to know in charts  an  equivalence class of 2-forms   for the relation   $\omega_1\sim \omega_2$,
\begin{center}
    where $\omega_1\sim \omega_2$ if $\omega_1=f\omega_2$ for some non-vanishing function $f$.\end{center}
    
A manifold endowed with such an equivalence class of local 2-forms, one of them being closed, is called a conformal symplectic manifold, a notion popularized  by Vaisman in \cite{Vaisman1976}. An equivalent notion  is  the notion of conformal structure $(M,\eta,\omega)$, a manifold $M$ endowed with a 1-form called the Lee form and a 2-form called the conformal form,  whose precise definition is given in section \ref{cccsm}. A proof of the equivalence of the two notions is given in \cite{ChantraineMurphy2019}. 

 We will study autonoumous conformal  Hamiltonian flows (CHF in short) $(\varphi_s)_{s\in\R}$ of compact manifolds, see definition in Section \ref{ssdefhamdyn}. They alter the conformal form up to a non-constant scaling factor. As the volume $\omega^n$ can increase or decrease at different points of the manifold under the action of the dynamics, we can expect different behaviours, some of them being {\sl conservative} e.g. completely elliptic periodic orbits, invariant foliations with compact leaves  and some other being {\sl dissipative}, e.g.   attractors or repulsors.

A precise definition of what we call conservative or dissipative requires the introduction of a notion related to the shape of the orbits. 
The {\sl  winding} of a point $x\in M$ through time is
defined as the map $t\mapsto r_t(x)$ ($r$ stands for ``rotation''),
\begin{equation*}
    r_t(x) := \int_0^t \eta(\partial_s\varphi_s(x))\ud s,\quad
    \forall t\in\R.
\end{equation*}
Then $\varphi_t^*\omega = e^{r_t}\omega$, see Lemma \ref{lem:rt}, and  a point $x\in M$ is
\begin{itemize}
    \item  either (positively) dissipative  when $\lim_{t\to+\infty} |r_t(x)|=+\infty$;
    \item or (positively) conservative.
\end{itemize}
Our main result, Proposition \ref{prop:CD}, asserts that for every CHF $(\varphi^H_t)$, if $\mathcal D_+$ is the set of positively dissipative points and if $\mathcal C_+$ the set of positively conservative points, then up to a set of zero volume, $\mathcal C_+$ coincides with the set of positively recurrent points, and then $\mathcal D_+$ with the set of positively non-recurrent points. Also, the $\omega$-limit set $\omega(x)$ of every $x\in\mathcal D_+$ is contained in $\{ H=0\}$.\\
Some   examples of conservative and dissipative points are
\begin{itemize}
    \item every attractor intersects $\{ H=0\}$, has non-trivial homology and almost every point in its basin of attraction that doesn't belong to the attractor is in $\mathcal D_+$, Corollary \ref{cor:attractorLee};
    \item if $x$ is a periodic points that is not a critical point of $H$, then 
    \begin{itemize}
        \item when $x\in\mathcal C_+$, the first return map to a Poincar\'e section preserves a closed 2-form and a foliation into (local) hypersurfaces;
        \item when $x\in \mathcal D_+$, then $H(x)=0$ and the first return map to a Poincar\'e section alters a certain closed 2-form up to a constant factor that is different from 1;
    \end{itemize}
    \item every fixed point of the flow is conservative.
\end{itemize}
We will provide also an example of wild conservative points: points that are recurrent, in $\{ H\neq 0\}$  but whose $\omega$-limit set intersects $\{ H=0\}$, see Section \ref{se:oscillating}. Hence these points satisfy  $\liminf_{t\to+\infty} |r_t(x)|=+\infty$. The origin of most of our examples is contact geometry. In particular, in Section \ref{sstwistedstructure}, we introduce a notion of twisted conformal symplectization that is crucial to the elaboration of examples and counter-examples.  \\
In a similar way, switching $H$ to $-H$, the set $\mathcal C_-$ of negatively conservative points is the set of $x\in M$ such that
$\lim_{t\to-\infty} |r_t(x)|=+\infty$ and $\mathcal D_-=M\backslash \mathcal C_-$ is the set of negatively dissipative points. We prove in Proposition \ref{prop:C_+} that $\cC_-$ and $\cC_+$  are always equal up to a set of zero volume. It is not true that for a general flow, the set of positively recurrent points is equal to the set of negatively recurrent points up to a set of volume zero.

 A CHF $(\varphi_t)$ is (positively) conservative when $\mathcal C_+=M$ and dissipative when $\mathcal C_+$ has zero volume.  We   highlight a strong relation between the topology of $\{ H=0\}$ and the property of being convervative:
 when $\{ H=0\}$ has a neighbourhood $V$ such that for every loop  $\gamma:\T\to V$, $\int_\gamma\eta=0$, then $(\varphi_t^H)$ is conservative, Section \ref{ssexactH=0}.  This   contains the case when $H$ doesn't vanish, Section \ref{ssHnonnul}.
    But there exist some examples of conservative CHF that are not in  this case, Section \ref{ssconsnonexact}. As the non-vanishing property is open in $C^0$-topology, we obtain $C^0$-open sets Hamiltonians $H$ such that the associated CHF flows are conservative.
    
    Among the conservative CHF, the Lee flows are those that correspond to the Hamiltonian $H=1$ for some choice of representative $(\eta,\omega)$ of the conformally symplectic structure. They are an extension of the Reeb flows in the contact setting. We will provide in every dimension examples of Lee flows 
    \begin{itemize}
        \item that are transitive, Section \ref{sstransLee}; this is different from the Hamiltonian symplectic case, where the level sets of $H$ are preserved;
        \item that have no periodic orbits, Section \ref{ssLeenoperiodic}; Weinstein conjecture in the contact setting and Arnol'd conjecture in the symplectic setting assert the existence of periodic orbits. This example emphasizes one difference between the CHF and the Reeb flows as well as the symplectic Hamiltonian flows.
    \end{itemize}
We will give a 2-dimensional example of Lee flow that is minimal (Section \ref{ssaxaconsdim2}), but we don't know if there is such an example in higher dimension.

We will give in Part \ref{ssLegendreattractors} of Section \ref{ssExampleattrcators} an example of dissipative CHF, with one normally hyperbolic attractor that is a Lagrangian submanifold, one normally hyperbolic repulsor that is also a Lagrangian submanifold and the remaining part of the manifold that is filled with heteroclinic connections. 
This gives a $C^1$-open set of CHF that are dissipative. See also Section \ref{ssaxadissdim2}.

There also exist $C^1$-open sets of CHF such that both $\mathcal C_+$ and $\mathcal D_+$ have positive volume. This happens when there is a normally hyperbolic periodic attractor    and one non degenerate local minimum of $e^\theta H$ where $\theta$ is a local primitive of $\eta$.\\

Another feature of the conformally symplectic dynamics is that they preserve isotropy (this is even a characterization of these dynamics). This is a common point with symplectic dynamics and contact dynamics. Therefore, we extend  or amend  some classical results for the invariant submanifolds of Hamiltonian flows. In Section \ref{ssinvdistr}, we prove that the CHF have a codimension 1 invariant foliation and explain in Section \ref{ssisotropy} the relation for a submanifold between being tangent to this foliation, being invariant and being isotropic (or coisotropic). This is reminiscent of Hamilton-Jacobi equation in the usual Hamiltonian setting.\\
We deduce that on a conformal cotangent bundle (see Section \ref{se:cotangent}), a Lagrangian invariant graph is necessarily  contained in the zero level set, which is a major difference with  the usual Hamiltonian setting.\\
Motivated by  the  result of Herman in the exact symplectic setting, \cite{Herman1989}, which asserts that every invariant torus on which the dynamics is $C^1$-conjugate to a minimal rotation is isotropic,  we  consider   tori $\mathcal T$ that are invariant by a CHF and such that the restricted dynamics is topologically conjugate to a rotation. In Section \ref{ssisotropy}, we recall the definition of the asymptotic cycle of an invariant measure and introduce in a similar way  the asymptotic cycle for flows on tori that are $C^0$-conjugate to a non necessarily minimal rotation. We prove that if the product of the cohomology class of the Lee form by the asymptotic cycle of $\mathcal T$ is non-zero, then $\mathcal T$ is isotropic. In particular, when the cohomology class of the Lee form is rational and when the rotation is minimal, the invariant torus is isotropic.

\subsection{2-dimensional examples}\label{ssexa2dim}

\subsubsection{A dissipative example}\label{ssaxadissdim2}

Let us discuss a simple
two-dimensional dissipative example that   illustrates some of our results.
Let $(M,\eta,\omega) = (\T^2,\ud x,\ud x\wedge\ud y)$ where $\T^2$
denotes the $2$-torus $\R^2/\Z^2$ and let $H:\T^2\to\R$
be the Hamiltonian function $H(x,y) = \sin(2\pi y)$.
We have pictured integral curves of the associated dynamics on Figure~\ref{fig:sinexample}.
\begin{figure}[!ht]
\centering
\includegraphics{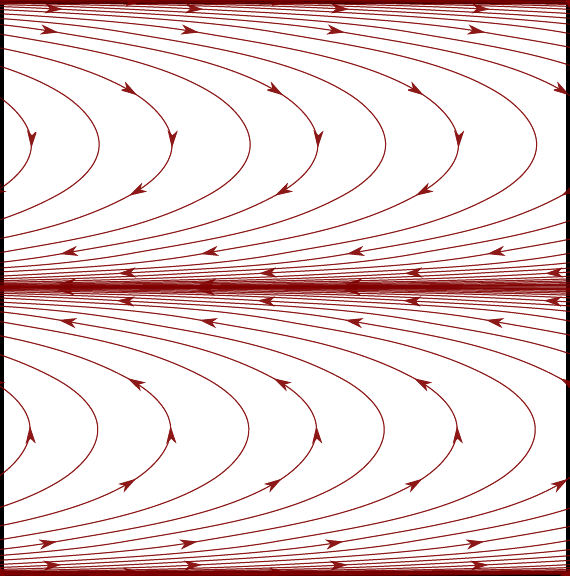}
\caption{Dynamics of $H(x,y)=\sin(2\pi y)$ in the
fundamental domain $[0,1]^2$}
\label{fig:sinexample}
\end{figure}
In this figure, we see that the only level set of $H$ that is preserved
is $\{ H=0\}$ and that it has two connected components:
an attractive circle and a repelling one.
Such a picture can be drawn in any dimension:
\emph{if the Lee form is not exact, there exist Hamiltonian flows
with attractive or repelling hyperbolic orbits} (\emph{cf}.
Proposition~\ref{prop:attractingPeriodicOrbits}).
Attractors (or repellers) are not necessarily contained in $\{ H=0\}$:
lifting this dynamics on the cover $\R/\Z\times\R/2\Z$, one
could see any of the two cylinders bounding the two attracting circles as
attractors.
However, as we will see attractors always intersect $\{ H=0\}$.
On Figure~\ref{fig:sinexample}, we see that the attractor is
winding in the $x$'s direction.
In general, \emph{the Lee form is not exact in any neighborhood
of the intersection of an attractor with $\{ H=0\}$}
(\emph{cf.} Corollary~\ref{cor:attractorLee}).
In particular, an attractor cannot be finite and must intersect $\{ H=0\}$.

\subsubsection{A conservative example}\label{ssaxaconsdim2}
In the opposite direction, let us point out the existence of
conformal Hamiltonian dynamics that preserve the symplectic
form $\omega$ but the behavior of which nonetheless differs from
the symplectic Hamiltonian case.
As a simple 2-dimensional example, let us consider the 2-torus
$\T^2$ endowed with its canonical area form $\omega=\ud x\wedge\ud y$
once again. Let us fix $a,b\in\R$ and choose the Lee form
$\eta := a\ud x+b\ud y$.
The Hamiltonian flow of $H\equiv 1$, which is called
the Lee flow associated to the representative
of the conformally symplectic structure
(the \emph{gauge}) $(\eta,\omega)$,
is $\varphi_t(x,y)=(x+bt,y-at)$.
If $a$ and $b$ are rationally independent, this flow is
minimal.
This is a striking difference with autonomous Hamiltonian
flows of symplectic manifolds, where trajectories are never
dense and there usually are plenty of periodic orbits.
In general, we prove that
\emph{there exist topologically transitive Lee flows in
any dimension} and that
\emph{there exist Lee flows without periodic orbit in
any dimension} (\emph{cf}. Propositions~\ref{prop:transitiveLee}
and \ref{prop:aperiodicLee}).
In both cases, the Lee form $\eta$ is not completely resonant  
  (\emph{i.e.} the set of its integrals along the loops  is a dense subgroup of $\R$),
which is necessary in order to have dense trajectories.
One could ask whether there always is a periodic orbit
when $\eta$ is  completely resonant,  
but this question is harder than
solving the Weinstein conjecture (\emph{i.e.} the existence of
a periodic orbit for any Reeb flow of a closed contact manifold).
 \subsection{ Structure of the article}
 \begin{itemize}
     \item In section \ref{SPrem}, we introduce the notions of conformal manifold and conformal Hamiltonian dynamics and prove some of their properties, Then we give some examples: the conformal cotangent bundle, the twisted conformal symplectization, and describe the invariant foliation.
     \item In section \ref{se:CvsD}, we characterize the global conservative-dissipative decomposition of the dynamics in term of recurrence. We prove the almost everywhere coincidence of the behaviours in the past and in the future. We also prove that the boundedness of the winding number implies the existence of invariant measures. We also provide an example of orbits that are conservative and have a strange oscillating behaviour.
     \item In section \ref{se:etaExact}, we give some topological conditions on $\{ H=0\}$ that imply that the dynamics is conservative. We gave some examples of such dynamics that are transitive and some others that have no periodic orbit. We   give an example of a conservative dynamics for which the topological condition for $\{ H=0\}$ is not satisfied.
     \item In section \ref{Sdiss}, we begin by studying some ergodic measures whose support is dissipative. Then we give examples of dissipative dynamics with Lagrangian attractors and repulsors, and also examples with periodic attractors and repulsors. We also give sufficient conditions implying that some connected component of $\{ H=0\}$ cannot be an atttractor.
     \item In section \ref{Sdistsub}, we give some condition that implies that a component of $\{ H=0\}$ is in the closure of a non-compact leaf of the invariant distribution. Then we study invariant submanifolds from different points of view: their position relatively to the invariant foliation, and when they are rotational tori, the relations between their asymptotic cycle and their isotropy.
     \item Finally, there is an appendix dealing with isotropic submanifolds.
 \end{itemize}
 
\subsection {  Acknowledgements.} {  The authors are grateful to Ana Rechtman for listening some  preliminary versions of this work, discussing them  and   pointing out the link with   classical results on foliations.}
The first author was supported by the postdoctoral fellowship of the Fondation Sciences
Mathématiques de Paris.

\section{Preliminaries}\label{SPrem}

\subsection{Conformal symplectic manifolds}\label{cccsm}

Given a closed 1-form $\eta$, the associated Lichnerowicz-De Rham differential
$\ud_\eta$ is defined on the differential forms $\alpha$
by $\ud_\eta \alpha := \ud\alpha - \eta\wedge\alpha$.
It satisfies $\ud_\eta^2 = 0$ and
$\ud_{\eta +\ud f}\alpha = e^f \ud_\eta (e^{-f}\alpha)$.
If $\ud_\eta \alpha = 0$, one says that $\alpha$ is $\eta$-closed.

Given an even dimensional manifold $M$,
a conformal symplectic structure is an equivalence class
of couples $(\eta,\omega)$ where $\eta$ is a closed 1-form
of $M$ and $\omega$ is a non-degenerate 2-form that
is $\eta$-closed, two such couples $(\eta_i,\omega_i)$,
$i\in\{1,2\}$, being equivalent if
there exists a map $f:M\to\R$ such that
$\eta_2 = \eta_1 + \ud f$ and $\omega_2 = e^f\omega_1$.
A conformal symplectic manifold is an even dimensional manifold $M$
endowed with a conformal symplectic structure,
we will often work with a specific representative $(\eta,\omega)$
and write $(M,\eta,\omega)$ the conformal symplectic manifold.
A notion that does not depend on the specific choice of
representative $(\eta,\omega)$ is called gauge invariant or
well defined up to gauge equivalence.
The closed 1-form $\eta$ is called the Lee form of $(M,\eta,\omega)$,
its cohomology class $[\eta]\in H^1(M;\R)$ is gauge invariant.
A conformal symplectomorphism $\varphi : (M_1,\eta_1,\omega_1)
\to (M_2,\eta_2,\omega_2)$ is a diffeomorphism $\varphi:M_1\to M_2$
such that $\varphi^*\eta_2 = \eta_1 + \ud f$ and
$\varphi^*\omega_2 = e^f \omega_1$ for some $f:M_1\to\R$
(this notion is gauge invariant).  When $\dim M\geq 4$, the second equality implies the first one.

Similarly to the symplectic case,
a submanifold $N$ of a conformal symplectic manifold $(M,\eta,\omega)$
is called isotropic if $TN \subset TN^\omega$,
coistropic if $TN^\omega\subset TN$ and lagrangian if
$TN=TN^\omega$ (where $E^\omega$ denotes the $\omega$-orthogonal bundle
of the bundle $E$), this notion is gauge invariant.

A symplectic manifold $(M,\omega)$ has a natural conformal symplectic
structure $(0,\omega)$ (which is the same as $(0,\lambda\omega)$
for $\lambda\in\R^*$); conversely, a conformal symplectic structure
$(\eta,\omega)$ comes from a symplectic structure if and only if
$\eta$ is exact.

\subsection{Hamiltonian dynamics}\label{ssdefhamdyn}

Given a map $H:M\to\R$ defined on a conformal symplectic
manifold $(M,\eta,\omega)$, we define its associated Hamiltonian
vector field $X$ by $\iota_X \omega = \ud_\eta H$,
conversely $H$ is the Hamiltonian of $X$.   When the cohomology class of $\eta$ is not $0$, then $H$ is unique.
 This matching    Hamiltonian-vector field  does depend on the choice of representative $(\eta,\omega)$
but not the algebra of Hamiltonian vector fields:
the previous vector field $X$ is the same as the one induced by $e^f H$
for the Lee form $\eta+\ud f$.
One can extend this definition to time-dependent Hamiltonian maps
but we will focus on autonomous Hamiltonian in this paper.
When $H\equiv 1$, the associated vector field $L^\eta$
is called the Lee vector field of $\eta$ and
its flow is called the Lee flow.

Let us assume that the vector field $X$ associated with $H$ is complete.
Let us denote $(\varphi_t)$ its flow and
\begin{equation*}
    r^H_t(x) := \int_0^t \eta(X\circ\varphi_s(x)) \ud s,\quad
    \forall x\in M,\forall t\in\R.
\end{equation*}
When the choice of $H$ is clear, we set $r_t := r_t^H$.
\begin{lemma}\label{lem:rt}
    Given a complete Hamiltonian flow $(\varphi_t)$ on $(M,\eta,\omega)$
    associated with a Hamiltonian
    $H$, for all $t\in\R$,
    \begin{equation*}
        \varphi_t^*\omega = e^{r_t}\omega,\
        \varphi_t^*\ud_\eta H = e^{r_t}\ud_\eta H,\
        H\circ\varphi_t = e^{r_t} H
        \text{ and }
        \varphi_t^*\eta = \eta + \ud r_t.
    \end{equation*}
    In particular, the level set $\{H=0\}$ is invariant
    under the flow and $\frac{1}{H}\omega$ is an invariant 2-form on $\{H\neq 0\}$.
\end{lemma}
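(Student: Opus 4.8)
The plan is to differentiate each pullback in $t$, recognize the resulting Lie derivative through Cartan's formula, and then integrate the (pointwise, scalar, linear) ODEs that appear. Throughout I would use that $\eta$ is closed and that $\omega$ is $\eta$-closed, i.e. $\ud\omega=\eta\wedge\omega$, together with the fact that the generating field $X$ is invariant under its own flow, $\varphi_t^* X=X$. The derivative rule $\frac{\ud}{\ud t}\varphi_t^*\beta=\varphi_t^*\cL_X\beta$ (valid since $X$ is autonomous) is the engine of the whole argument.

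First I would establish the single computation from which almost everything follows,
\[
\cL_X\omega=\eta(X)\,\omega .
\]
By Cartan's formula $\cL_X\omega=\iota_X\ud\omega+\ud\iota_X\omega$. Using $\ud\omega=\eta\wedge\omega$ and $\iota_X\omega=\ud_\eta H=\ud H-H\eta$, one finds $\iota_X\ud\omega=\eta(X)\omega-\eta\wedge\ud H$ (the $\eta\wedge\eta$ term dropping out), while $\ud\iota_X\omega=\ud(\ud H-H\eta)=-\ud H\wedge\eta=\eta\wedge\ud H$ since $\ud\eta=0$. Adding the two cancels the $\eta\wedge\ud H$ contributions and leaves $\eta(X)\omega$. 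Setting $\alpha_t:=\varphi_t^*\omega$, the derivative rule gives the pointwise identity $\frac{\ud}{\ud t}\alpha_t=(\eta(X)\circ\varphi_t)\,\alpha_t$ in the finite-dimensional space $\Lambda^2 T_x^*M$; this scalar linear ODE integrates to $\alpha_t=e^{r_t}\omega$, which is the first assertion.

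The identity for $\ud_\eta H$ is then immediate from the invariance of $X$:
\[
\varphi_t^*\ud_\eta H=\varphi_t^*(\iota_X\omega)=\iota_X(\varphi_t^*\omega)=\iota_X(e^{r_t}\omega)=e^{r_t}\ud_\eta H .
\]
For the Hamiltonian identity, contracting $\iota_X\omega=\ud H-H\eta$ once more with $X$ and using $\iota_X\iota_X\omega=0$ yields $\ud H(X)=H\,\eta(X)$; hence $\frac{\ud}{\ud t}(H\circ\varphi_t)=(\ud H(X))\circ\varphi_t=(H\circ\varphi_t)\,\dot r_t$, a scalar ODE giving $H\circ\varphi_t=e^{r_t}H$. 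For the Lee-form identity I would instead compute $\cL_X\eta=\iota_X\ud\eta+\ud\iota_X\eta=\ud(\eta(X))$ (again using $\ud\eta=0$), so that $\frac{\ud}{\ud t}\varphi_t^*\eta=\ud(\eta(X)\circ\varphi_t)=\ud\dot r_t=\partial_t(\ud r_t)$; integrating from $t=0$ with $r_0=0$ gives $\varphi_t^*\eta=\eta+\ud r_t$. The two consequences then drop out: $H\circ\varphi_t=e^{r_t}H$ forces $\{H=0\}$ to be invariant, and on the (invariant) set $\{H\neq 0\}$ one has $\varphi_t^*\big(\tfrac1H\omega\big)=\tfrac{1}{H\circ\varphi_t}\,\varphi_t^*\omega=\tfrac{1}{e^{r_t}H}\,e^{r_t}\omega=\tfrac1H\omega$.

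The only delicate point is the opening computation $\cL_X\omega=\eta(X)\omega$: it is where the structural hypotheses $\ud\eta=0$ and $\ud\omega=\eta\wedge\omega$ are genuinely used, and one must track the signs in $\iota_X(\eta\wedge\omega)$ and in $\ud(H\eta)$ so that the two $\eta\wedge\ud H$ terms cancel exactly. Once this is secured, everything else is a routine passage from a Lie-derivative formula to a pointwise scalar linear ODE, the sole remaining subtlety being the harmless commutation $\partial_t\,\ud r_t=\ud\,\partial_t r_t$ needed for the Lee-form identity.
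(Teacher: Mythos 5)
Your proof is correct and follows essentially the same route as the paper's: both establish $\cL_X\omega=\eta(X)\,\omega$ via Cartan's formula together with $\ud\omega=\eta\wedge\omega$ and $\iota_X\omega=\ud_\eta H$, then obtain the $\ud_\eta H$ identity from flow-invariance of $X$, the identity $H\circ\varphi_t=e^{r_t}H$ from $\ud H(X)=H\eta(X)$, and the Lee-form identity from $\cL_X\eta=\ud(\eta(X))$. The only cosmetic difference is that you integrate explicit scalar ODEs where the paper leaves that step implicit.
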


\begin{proof}
    By taking the Lie derivative of $\omega$,
    \begin{equation*}
        \cL_X \omega = \ud(\ud_\eta H) + \iota_X(\eta\wedge\omega)
        = \eta\wedge\ud H + \eta(X)\omega
        - \eta\wedge(\iota_X\omega)
    \end{equation*}
    Thus,
    \begin{equation*}
        \cL_X \omega = \eta\wedge\ud H + \eta(X)\omega
        - \eta\wedge\ud H + \eta\wedge\eta H
        = \eta(X)\omega,
    \end{equation*}
    which implies the first equality of the statement.
    
    We deduce that 
   \begin{eqnarray*}\varphi_t^*(\ud_\eta H)&=\varphi_t^*(\iota_X\omega)&=\omega(X\circ \varphi_t, \ud\varphi_t\cdot)=\omega(\ud\varphi_t X, \ud\varphi_t\cdot)\\
    &=\iota_X(\varphi_t^*\omega)&=\iota_X(e^{r_t}\omega)=e^{r_t}\ud_\eta H.
 \end{eqnarray*}
    Injecting $X$ in $\iota_X\omega = \ud_\eta H$,
    one gets $\ud H\cdot X = \eta(X)H$, which   implies that
    $h(t):=H\circ\varphi_t(x)$, for a fixed $x\in M$,
    satisfies $h'(t)=\eta(X\circ\varphi_t(x))h(t)$ and the third statement follows.
    Finally, the last statement is due to $\cL_X \eta = \ud(\eta(X))$.
\end{proof}
We remark that the relations $\varphi^*_t\omega=e^{r_t}\omega$
and $\varphi_t^*\eta = \eta + \ud r_t$ are also
satisfied in the time-dependent setting.
This indeed implies that conformal Hamiltonian diffeomorphisms
are conformal symplectomorphisms.

\subsection{Conformal cotangent bundles}\label{se:cotangent}
Given a manifold $L$ endowed with a closed $1$-form $\beta$,
one can define a conformal symplectic structure on $T^*L$
denoted $T^*_\beta L$ in the following way.
Let $\pi:T^*L\to L$ be the cotangent bundle map
and $\lambda$ the associated Liouville form:
$\lambda_{(q,p)}\cdot\xi := p(\ud\pi\cdot\xi)$.
The conformal structure defining $T^*_\beta L$
is $(\eta,\omega):=(\pi^*\beta,-\ud_\eta\lambda)$.
The neighborhood of the $0$-section of $T^*_\beta L$ is
a model of a neighborhood of a Lagrangian embedding of $L$
pulling back the Lee form to $\beta$
(see Section~\ref{se:weinstein}).

Let us recall how one can canonically extend diffeomorphisms
and flows of $M$ to conformal symplectomorphisms and Hamiltonian flows
of $T^*_\beta M$.
Let $f:M\to N$ be a diffeomorphism, one can symplectically extend
it to $\hat{f}:T^*M\to T^*N$ by the well-known formula:
\begin{equation*}
    \hat{f}(q,p) = \left(f(q),p\circ \ud f_q^{-1}\right),\quad
    \forall (q,p)\in T^*M.
\end{equation*}
Now if the diffeomorphism $f:M\to N$ satisfies $f^*\beta = \alpha +\ud r$,
for closed $1$-forms $\alpha$, $\beta$ and some map $r:M\to\R$,
the extension $\hat{f}:T^*_\alpha M \to T^*_\beta N$ defined by
\begin{equation*}
    \hat{f}(q,p) = (f(q),e^{r(q)} p \circ\ud f_q^{-1}),\quad
    \forall (q,p)\in T^*M
\end{equation*}
is conformally symplectic.
Indeed, let us denote by $\pi_M$, $\pi_N$ the associated cotangent bundle maps,
$\lambda_M$, $\lambda_N$ the associated Liouville forms.
Then $\hat{f}^*\lambda_N = e^{r\circ\pi_M}\lambda_M$:
\begin{equation*}
    \left(\hat{f}^*\lambda_N\right)_{(q,p)}\cdot\xi =
    e^{r(q)}p\circ\ud f^{-1} \circ\ud\pi_N\circ\ud \hat{f} \cdot\xi
    = e^{r(q)}p\circ \ud\pi_M\cdot\xi
\end{equation*}
as $\pi_N \circ\hat{f}=f\circ\pi_M$.
We deduce $\hat{f}^*(\ud_{\pi_N^*\beta}\lambda_N) = 
e^{r\circ\pi_M}\ud_{\pi_M^*\alpha}\lambda_M$:
\begin{equation*}
    \begin{split}
    \hat{f}^*(\ud\lambda_N - \pi_N^*\beta\wedge\lambda_N)
    &= \ud(e^{r\circ\pi_M}\lambda_M) - \pi_M^*(\alpha+\ud
    r)\wedge(e^{r\circ\pi_M}\lambda_M)\\
    &= e^{r\circ\pi_M}(\ud \lambda_M - \pi_M^*\alpha\wedge\lambda_M).
\end{split}
\end{equation*}

Now given a flow $f_t : M\to M$, with $f_0=\id$, of associated vector field
$X_t$, one has $f_t^*\beta = \beta + \ud r_t$ with
$r_t(q) := \int_0^t \beta(X_s\circ f_s(q))\ud s$
so the associated conformal symplectic flow $(\hat{f}_t)$ is
well-defined and one checks that it corresponds to the Hamiltonian flow
of $H_t(q,p) = p(X_t(q))$.

\subsection{Twisted conformal symplectizations}\label{sstwistedstructure}
A large class of conformal symplectic manifold that are non-symplectic
is given by the conformal symplectizations of contact manifolds.
Let $(Y^{2n+1},\alpha)$ be a manifold endowed with a contact form $\alpha$
(\emph{i.e.} a 1-form satisfying $\alpha\wedge(\ud\alpha)^n\neq 0$),
its conformal symplectization $\Sconf(Y,\alpha)$ is the manifold
$Y\times S^1$ endowed with the structure $(\eta=-\ud\theta,\omega =
-\ud_\eta(\pi^*\alpha))$
where $S^1=\R/\Z$ whereas $\theta:Y\times S^1\to S^1$ and $\pi:Y\times S^1\to Y$
are the canonical projections.
  The conformal symplectization only depends on the oriented contact
distribution $\ker\alpha$.
Indeed, when $(\eta',\omega') = (\eta -\ud f,
-\ud_{\eta'}(e^{-f}\alpha))$,
$(x,\theta)\mapsto (x,\theta-f(x))$
is a conformally symplectic diffeomorphism between   $(\omega,\eta)$ and $(\omega',\eta')$.

Given a closed 1-form $\beta$ of $Y$, we also define the $\beta$-twisted conformal
symplectization of $(V,\alpha)$ by replacing $\eta$ in the
previous definition with $\eta = \pi^*\beta - \ud\theta$,
we denote it $\Sconf_\beta(Y,\alpha)$.
We check that $\omega$ is non-degenerate by showing that $\omega^{n+1}$
does not vanish:
\begin{equation*}
    (-1)^{n+1}\omega^{n+1} = (n+1)(\ud\theta - \pi^*\beta)\wedge\pi^*\alpha
    \wedge (\ud(\pi^*\alpha))^n
    = (n+1)\ud\theta\wedge \pi^*(\alpha\wedge(\ud\alpha)^n) \neq 0,
\end{equation*}
the second equality comes from the fact that
$\beta\wedge\alpha\wedge(\ud\alpha)^n = 0$ for a degree reason
and the contact hypotheses implies the non-vanishing of the last
expression.
When the choice of the contact form $\alpha$ is clear,
the couple $(\eta,\omega)=(\pi^*\beta-\ud\theta,-\ud_\eta(\pi^*\alpha))$
as well as the associated Lee vector field and Hamilton equations
will be implicitly chosen or referred to as standard.

Let us show how the study of conformal Hamiltonian
dynamics will also inform us about contact Hamiltonian
dynamics , see also Proposition \ref{PLegendrianattractors}.
We recall that
the contact Hamiltonian vector field $X$ associated with the contact Hamiltonian
map $H:Y\to\R$ is defined by
\begin{equation}\label{eq:contactHam}
    \begin{cases}
        \alpha(X) = H,\\
        \iota_X\ud\alpha = (\ud H\cdot R)\alpha -\ud H,
    \end{cases}
\end{equation}
where $R$ is the Reeb vector field defined by $\alpha(R)=1$ and
$\iota_R\ud\alpha = 0$ (the Hamiltonian vector field associated with $H=1$).
\begin{lemma}\label{lem:HamiltonSconf}
    Let $H:Y\to\R$ be a contact Hamiltonian map of the contact manifold $(Y,\alpha)$
    with fixed contact form $\alpha$ associated with the Reeb vector field $R$
    and let $X$ be the associated Hamiltonian vector field.
    The conformal Hamiltonian vector field on $\Sconf_\beta(Y,\alpha)$
    associated with $\widetilde{H}:(x,\theta)\mapsto H(x)$ is
    \begin{equation*}
        \widetilde{X} =
        X\oplus (\beta(X)-\ud H\cdot R)\partial_\theta \in TY\oplus TS^1.
    \end{equation*}
    In particular, the standard Lee vector field is
    $L:=R\oplus\beta(R)\partial_\theta$.
\end{lemma}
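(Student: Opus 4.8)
The plan is to compute the conformal Hamiltonian vector field $\widetilde X$ on $\Sconf_\beta(Y,\alpha)$ directly from its defining equation $\iota_{\widetilde X}\omega = \ud_\eta \widetilde H$, using the explicit standard gauge $(\eta,\omega) = (\pi^*\beta - \ud\theta,\ -\ud_\eta(\pi^*\alpha))$, and then match the result against the contact Hamilton equations \eqref{eq:contactHam}. First I would write $\widetilde X = \widetilde X_Y \oplus c\,\partial_\theta$ for an unknown horizontal part $\widetilde X_Y \in TY$ and an unknown coefficient $c$, and expand both sides of $\iota_{\widetilde X}\omega = \ud_\eta\widetilde H$ in the natural coframe coming from $TY\oplus TS^1$. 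Since $\widetilde H = H\circ\pi$ depends only on the $Y$-factor, $\ud_\eta\widetilde H = \ud\widetilde H - \widetilde H\,\eta = \pi^*\ud H - (H\circ\pi)(\pi^*\beta - \ud\theta)$, which is the target one-form we must reproduce.

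\medskip

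The main computation is to unfold $\omega$. Expanding $\omega = -\ud(\pi^*\alpha) + \eta\wedge\pi^*\alpha = -\pi^*\ud\alpha + (\pi^*\beta - \ud\theta)\wedge\pi^*\alpha$, I would contract with $\widetilde X$ using the derivation property of $\iota$. The terms split into a purely horizontal contribution $\iota_{\widetilde X_Y}(-\pi^*\ud\alpha + \pi^*\beta\wedge\pi^*\alpha)$ and the terms involving $\ud\theta$, namely $-\iota_{\widetilde X}(\ud\theta\wedge\pi^*\alpha)$. Using $\ud\theta(\widetilde X) = c$, $\ud\theta(\cdot)$ vanishing on horizontal vectors, and $\pi^*\alpha(\partial_\theta) = 0$, the $\ud\theta$ piece produces $-c\,\pi^*\alpha + (\pi^*\alpha\cdot\widetilde X_Y)\,\ud\theta = -c\,\pi^*\alpha + (\alpha(\widetilde X_Y)\circ\pi)\,\ud\theta$. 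Comparing the coefficient of $\ud\theta$ on the two sides of $\iota_{\widetilde X}\omega = \ud_\eta\widetilde H$ forces $\alpha(\widetilde X_Y) = H$, which is exactly the first contact equation, identifying $\alpha(\widetilde X_Y)=H=\widetilde H$. Matching the horizontal one-forms then gives $\iota_{\widetilde X_Y}(-\pi^*\ud\alpha) + \alpha(\widetilde X_Y)\,\pi^*\beta - \beta(\widetilde X_Y)\,\pi^*\alpha - c\,\pi^*\alpha = \pi^*\ud H - (H\circ\pi)\pi^*\beta$; after cancelling the $\pi^*\beta$ terms via $\alpha(\widetilde X_Y)=H$, this reduces to $\iota_{\widetilde X_Y}\ud\alpha = \ud H + \big(\beta(\widetilde X_Y) + c - \text{(something)}\big)\alpha$, and I expect this to coincide with the second contact equation $\iota_X\ud\alpha = (\ud H\cdot R)\alpha - \ud H$, forcing $\widetilde X_Y = X$.

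\medskip

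Once $\widetilde X_Y = X$ is established, the remaining task is to read off $c$ from the $\pi^*\alpha$-coefficient in the horizontal matching. Collecting the constraint there should yield $c = \beta(X) - \ud H\cdot R$: the $\beta(X)$ contribution comes from the $\pi^*\beta\wedge\pi^*\alpha$ term in $\omega$, while the $-\ud H\cdot R$ comes from the $(\ud H\cdot R)\alpha$ term on the right-hand side of the second contact equation. The $H=1$ specialization is then immediate: $\ud H = 0$ gives $\ud H\cdot R = 0$ and $X = R$, so $\widetilde X = R \oplus \beta(R)\partial_\theta = L$, recovering the stated Lee vector field.

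\medskip

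The step I expect to be most delicate is the horizontal one-form matching, because the terms $\pi^*\beta\wedge\pi^*\alpha$ and the $\alpha$-valued right-hand side of \eqref{eq:contactHam} both contribute multiples of $\pi^*\alpha$ and $\pi^*\beta$, and one must keep careful track of which contraction produces which coefficient so that the $\beta$-dependence cleanly separates the identification of the horizontal part $\widetilde X_Y = X$ from the determination of $c$. The bookkeeping is routine but error-prone; the conceptual content is simply that the $\ud\theta$-component of $\iota_{\widetilde X}\omega$ encodes the first contact equation and the horizontal component encodes the second, with $\beta$ entering only additively through $\beta(X)$ in the $\partial_\theta$-coefficient.
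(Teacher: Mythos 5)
Your strategy --- write $\widetilde{X} = \widetilde{X}_Y \oplus c\,\partial_\theta$, expand $\iota_{\widetilde{X}}\omega = \ud_\eta\widetilde{H}$ in the splitting $TY\oplus TS^1$, and identify coefficients --- is the paper's own ``by identification'' argument, but your write-up has two concrete problems. The first is a sign slip that breaks your claimed cancellation: for $1$-forms, $\iota_{\widetilde{X}}(\pi^*\beta\wedge\pi^*\alpha) = \beta(\widetilde{X}_Y)\,\pi^*\alpha - \alpha(\widetilde{X}_Y)\,\pi^*\beta$, the opposite of what you wrote. With your signs, after substituting $\alpha(\widetilde{X}_Y)=H$ the $\pi^*\beta$-terms on the two sides are $+H\pi^*\beta$ and $-H\pi^*\beta$, so they do \emph{not} cancel (they would force $H\equiv 0$). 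With the correct signs they do cancel, and the horizontal equation reads
\begin{equation*}
    \iota_{\widetilde{X}_Y}\ud\alpha = \bigl(\beta(\widetilde{X}_Y)-c\bigr)\alpha - \ud H
\end{equation*}
(note also the sign of $\ud H$, which differs from your reduction).

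The second problem is structural: your plan ``identify this with the second contact equation, forcing $\widetilde{X}_Y=X$, then read off $c$'' is circular as stated, because recognizing the right-hand side as $(\ud H\cdot R)\alpha - \ud H$ requires already knowing that $\beta(\widetilde{X}_Y)-c = \ud H\cdot R$, and that quantity involves both unknowns; you flag this as the delicate step but never resolve it. The resolution is one line: contract the displayed equation with $R$. Since $\iota_R\ud\alpha = 0$ and $\alpha(R)=1$, the left-hand side vanishes and you get $\beta(\widetilde{X}_Y) - c = \ud H\cdot R$. Then $\widetilde{X}_Y$ satisfies both equations of \eqref{eq:contactHam}, and since these determine the vector field uniquely (if $\alpha(Z)=0$ and $\iota_Z\ud\alpha=0$, then $Z\in\ker\ud\alpha=\R R$ and $\alpha(Z)=0$ force $Z=0$), you conclude $\widetilde{X}_Y = X$ and hence $c=\beta(X)-\ud H\cdot R$. (Alternatively, by nondegeneracy of $\omega$ the conformal Hamiltonian field is unique, so it suffices to verify that $X\oplus(\beta(X)-\ud H\cdot R)\partial_\theta$ solves the equation.) The paper performs this same disentangling with a different contraction: it first computes the Lee field $L=R\oplus\beta(R)\partial_\theta$ by solving the case $H\equiv 1$, and then evaluates $\iota_{\widetilde{X}}\omega=\ud_\eta\widetilde{H}$ on $L$ to get $\eta(\widetilde{X}) = \omega(\widetilde{X},L) = \ud H\cdot R$, which is exactly the identity $\beta(\widetilde{X}_Y)-c=\ud H\cdot R$. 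Once your argument is repaired by contracting with $R$, your order of deduction is legitimate and slightly more streamlined than the paper's: the Lee field then genuinely falls out as the $H\equiv 1$ specialization, rather than having to be computed first.
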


\begin{proof}
    Let us first derivate the expression of the Lee vector field
    $L$.
    Let $\eta := \pi^*\beta- \ud\theta$ be the Lee form and
    $\omega := -\ud_\eta (\pi^*\alpha)$ be associated symplectic form.
    Let us write $L=V\oplus f\partial_\theta$, $V$ being a vector field
    of $Y$ and $f:Y\to\R$.
    Since $\iota_L\omega = -\eta$, one has $\eta(L) = 0$, that is
    $f = \beta(V)$.
    Developing the Lee equation, one then gets
    \begin{equation*}
        \iota_L\ud(\pi^*\alpha) + \alpha(V)(\pi^*\beta - \ud\theta)
        = \pi^*\beta - \ud\theta.
    \end{equation*}
    By identification, $\alpha(V)=1$ and $\iota_V\ud\alpha = \beta - \alpha(V)\beta
    = 0$,
    therefore $V=R$.
    The general case is also deduced by identification, once we have remarked that
    \begin{equation*}
        \eta(\widetilde{X}) = \omega(\widetilde{X},L) =
        \ud \widetilde{H}\cdot L - \widetilde{H}\eta(L) = \ud H\cdot R.
    \end{equation*}
\end{proof}
Therefore the conformal Hamiltonian flow $(\Phi_t)$ of $\Sconf_\beta(Y,\alpha)$
lifting the contact Hamiltonian flow $(\varphi_t)$ is
$\Phi_t(x,\theta) = (\varphi_t(x),\theta + \rho_t(x) - r_t(x))$
where
\begin{equation*}
    r_t(x) =
    r_t^{\widetilde{H}}(x,\theta) = \int_0^t (\ud H(\varphi_s(x))\cdot R)
    \ud s \quad \text{and}\quad
    \rho_t(x) = \int_0^t \beta(\partial_s\varphi_s(x))\ud s.
\end{equation*}
The expression of $r_t$ is consistent with the following general fact for
conformal Hamiltonian vector fields $X$ of $(M,\eta,\omega)$:
\begin{equation*}
    \eta(X) = \omega(X,L) = \ud H\cdot L -
    \eta(L)H = \ud H\cdot L,
\end{equation*}
where $L$ is the Lee vector field.
An isotropic embedding $i:L\hookrightarrow  (Y,\alpha)$
is by definition an embedding such that $i^*\alpha=0$,
it is Legendrian when the dimension of $L$ is maximal:
$2\dim L + 1 = \dim Y$.
One can associate to every isotropic submanifold $L\subset Y$
the isotropic lift $L\times S^1\subset \Sconf_\beta (Y,\alpha)$.
Therefore, dynamical properties of contact Hamiltonians
can be deduced from properties of conformal Hamiltonians
``by projection $\Sconf_\beta (Y,\alpha)\to Y$''.   See Part \ref{ssLegendreattractors} of section \ref{ssExampleattrcators}.

\subsection{The invariant distribution $\cF$}\label{ssinvdistr} 

In the conformal setting, the Hamiltonian map $H$ is not
an integral of motion.
But the (singular) distribution $\cF:=\ker\ud_\eta H$ is still
invariant since $\varphi^*_t\ud_\eta H = e^{r_t}\ud_\eta H$
(Lemma~\ref{lem:rt}). 
Moreover, we have
$$\ud\big(\ud_\eta H)=\eta\wedge \ud H=\eta\wedge \ud_\eta H
$$
hence by Frobenius theorem, at every regular point the Pfaffian distribution $\ker\ud_\eta H$ is integrable.

However, in dynamical systems with dissipative behaviors,
its regular leaves are often non-compact
(the important exception being $\{ H=0\}$).
Let us describe the major properties of $\cF$.

\begin{lemma}\label{lem:Hpath}
If $\gamma:[0,1]\to M$ is a path tangent to $\cF$,
then $H(\gamma(1)) = e^{\int_\gamma \eta} H(\gamma(0))$.
\end{lemma}

\begin{proof}
    Similarly to the proof of Lemma~\ref{lem:rt},
    $h:=H\circ\gamma$ satisfies $h'=\eta(\dot{\gamma})h$.
\end{proof}

\begin{corollary}\label{cor:01law}
    Every connected submanifold $L\subset M$ tangent to $\cF$ is either
    included in $\{ H =0\}$ or in $\{ H\neq 0\}$.
    In the case where the pull-back of the Lee form to $L$ is not
    exact, $L$ is included in $\{ H=0\}$.
\end{corollary}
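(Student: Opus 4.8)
The plan is to deduce both assertions from Lemma~\ref{lem:Hpath}, which already computes how $H$ evolves along any path tangent to $\cF$. The key observation is that $\{H=0\}$ and $\{H\neq 0\}$ are both invariant under the ``flow along $\cF$'' because the multiplicative factor $e^{\int_\gamma\eta}$ is never zero.

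First I would prove the dichotomy. Let $L\subset M$ be connected and tangent to $\cF$, and set $Z := L\cap\{H=0\}$. I claim $Z$ is both open and closed in $L$. Closedness is immediate since $H$ is continuous. For openness, take $x\in Z$; since $L$ is a (connected, hence locally path-connected) submanifold tangent to $\cF$, every nearby point $y\in L$ can be joined to $x$ by a path $\gamma$ inside $L$, which is therefore tangent to $\cF$. Lemma~\ref{lem:Hpath} gives $H(y) = e^{\int_\gamma\eta}H(x) = 0$, so a whole neighbourhood of $x$ in $L$ lies in $Z$. By connectedness of $L$, either $Z=L$ (so $L\subset\{H=0\}$) or $Z=\emptyset$ (so $L\subset\{H\neq 0\}$).

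For the second statement, suppose $L\subset\{H\neq 0\}$; I will show that the pull-back $\eta|_L$ is exact. On $\{H\neq 0\}$ the function $\log|H|$ is well defined and smooth (its sign is locally constant, so $|H|$ is smooth there). Lemma~\ref{lem:Hpath} in infinitesimal form reads $\ud(\log|H|) = \eta$ along directions tangent to $\cF$ — indeed, differentiating $H(\gamma(t)) = e^{\int_0^t\eta(\dot\gamma)}H(\gamma(0))$ gives $\frac{d}{dt}\log|H(\gamma(t))| = \eta(\dot\gamma(t))$ for every path $\gamma$ tangent to $\cF$. Hence on $L$ we have $\eta|_L = \ud\big(\log|H|\,\big|_L\big)$, so $\eta|_L$ is exact. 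Contrapositively, if $\eta|_L$ is not exact then $L$ cannot be contained in $\{H\neq 0\}$, so by the dichotomy $L\subset\{H=0\}$.

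The only mild subtlety, and the step I would be most careful about, is the smoothness and single-valuedness of $\log|H|$ on $L$: this relies on the sign of $H$ being constant on $L$, which is itself a consequence of $L$ being connected and contained in $\{H\neq 0\}$ (so $H$ has no zeros on $L$ and its sign cannot jump). Once that is settled, the identification $\eta|_L = \ud(\log|H||_L)$ is just the differential form of Lemma~\ref{lem:Hpath}, and exactness follows immediately.
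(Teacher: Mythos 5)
Your proposal is correct and matches the paper's (implicit) argument: the paper states this corollary without proof as a direct consequence of Lemma~\ref{lem:Hpath}, and your open--closed dichotomy plus the explicit primitive $\log|H|$ for $\eta|_L$ on $\{H\neq 0\}$ is precisely that consequence spelled out. The only remark is that your final ``subtlety'' is a non-issue: $\log|H|$ is smooth and single-valued on all of $\{H\neq 0\}$ (locally it equals $\log(\pm H)$), so no separate sign-constancy argument is needed.
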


In the symplectic case, regular levels of $H$ admit invariant
volume forms (see \emph{e.g.} \cite[\S I.8]{Ekeland1990}),
the following proposition generalizes this
phenomenon.

\begin{proposition}\label{prop:leafvolume}
    Let $(M,\eta,\omega)$ be a $2n$-dimensional closed conformal symplectic
    manifold and $H:M\to\R$ a Hamiltonian, the
    flow of which is $(\varphi_t)$.
    Let $i:\Sigma\hookrightarrow M$ be an embedded leaf tangent to $\cF$.
    Then there exists a volume form $\mu$ of $\Sigma$
    such that $\varphi^*_t \mu = e^{(n-1)r_t}\mu$.
    Moreover, there exists a $(2n-1)$-form
    $\mu_0$ on $M$ such that $\mu=i^*\mu_0$ and
    $\mu_0\wedge \ud_\eta H = \omega^n$ in the neighborhood
    of $\Sigma$.
\end{proposition}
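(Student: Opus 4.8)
\emph{Proof proposal.}
The plan is to realise $\mu$ as the restriction to $\Sigma$ of a $(2n-1)$-form $\mu_0$ on $M$ obtained by ``dividing'' the volume $\omega^n$ by $\ud_\eta H$, and then to read off both the volume property and the transformation law directly from Lemma~\ref{lem:rt}. Throughout I would work on the open set $U:=\{\ud_\eta H\neq 0\}$, which contains every regular leaf such as $\Sigma$ and is flow-invariant: since $\varphi_t^*\ud_\eta H=e^{r_t}\ud_\eta H$ with $e^{r_t}\neq 0$, the form $\ud_\eta H$ vanishes at $\varphi_t(p)$ if and only if it vanishes at $p$, so $\varphi_t(U)=U$. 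On such a leaf $T_p\Sigma=\ker(\ud_\eta H)_p$, whence $i^*\ud_\eta H=0$.

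First I would construct $\mu_0$. Fixing an auxiliary Riemannian metric, set $Z:=(\ud_\eta H)^\sharp/|\ud_\eta H|^2$, a smooth vector field on $U$ with $(\ud_\eta H)(Z)=1$, and define $\mu_0:=-\iota_Z\omega^n$. As $\omega^n\wedge\ud_\eta H$ is a $(2n+1)$-form on a $2n$-manifold it vanishes, so contracting by $Z$ gives $0=(\iota_Z\omega^n)\wedge\ud_\eta H+\omega^n$, that is $\mu_0\wedge\ud_\eta H=\omega^n$ on $U$, which is the second assertion. I would then set $\mu:=i^*\mu_0$.

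Two pointwise facts finish the algebraic part, both proved by completing a basis $e_1,\dots,e_{2n-1}$ of $T_p\Sigma=\ker(\ud_\eta H)_p$ with a vector $Z$ satisfying $(\ud_\eta H)(Z)\neq0$ and observing that in the evaluation $(\,\cdot\,\wedge\ud_\eta H)(e_1,\dots,e_{2n-1},Z)$ only the term carrying $\ud_\eta H$ on $Z$ survives. Applied to $\mu_0$ this yields $\mu(e_1,\dots,e_{2n-1})\,(\ud_\eta H)(Z)=\pm\,\omega^n(e_1,\dots,e_{2n-1},Z)\neq 0$, so $\mu$ is a volume form; applied to any $\nu$ with $\nu\wedge\ud_\eta H=0$ it gives $i^*\nu=0$, so $\mu$ is independent of the chosen $\mu_0$ (two choices differ by such a $\nu$). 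For the transformation law I would combine $\varphi_t^*\omega^n=e^{nr_t}\omega^n$ and $\varphi_t^*\ud_\eta H=e^{r_t}\ud_\eta H$ from Lemma~\ref{lem:rt} to get
\[
  \bigl(\varphi_t^*\mu_0-e^{(n-1)r_t}\mu_0\bigr)\wedge\ud_\eta H
  = e^{-r_t}\varphi_t^*(\mu_0\wedge\ud_\eta H)-e^{(n-1)r_t}\omega^n=0 .
\]
Pulling this back by $i$ and using $i^*\ud_\eta H=0$ with the second pointwise fact gives $i^*\varphi_t^*\mu_0=e^{(n-1)r_t}\,i^*\mu_0$, i.e. $\varphi_t^*\mu=e^{(n-1)r_t}\mu$.

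The one point needing care—more to state cleanly than to prove—is the meaning of $\varphi_t^*\mu$, since the flow does not fix $\Sigma$ but only permutes the leaves of $\cF$. I would resolve this by defining $\mu$ simultaneously on every leaf by the same recipe $(\text{inclusion})^*\mu_0$, so that $\mu$ is a volume form along the foliation; then $i^*\varphi_t^*\mu_0=(\varphi_t|_\Sigma)^*\mu_{\varphi_t(\Sigma)}$ and the displayed identity is exactly the asserted relation between the induced volumes of $\Sigma$ and of its image leaf (and it becomes the literal equality $\varphi_t^*\mu=e^{(n-1)r_t}\mu$ when $\Sigma$ is invariant, e.g. $\Sigma=\{H=0\}$). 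Everything else is the routine exterior-algebra bookkeeping above, all taking place on the invariant regular set $U$ where division by $\ud_\eta H$ is legitimate.
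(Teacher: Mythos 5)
Your core computations are correct, and your route is genuinely different from the paper's. The paper starts from an arbitrary $(2n-1)$-form $\mu_1$ whose restriction to $\Sigma$ is a volume form, rescales it by a function $f$ so that $f\mu_1\wedge\ud_\eta H=\omega^n$ near $\Sigma$, and then obtains the transformation law infinitesimally: applying $\cL_X$ to the identity $\mu_0\wedge\ud_\eta H=\omega^n$ yields $(\cL_X\mu_0)\wedge\ud_\eta H=(n-1)\eta(X)\,\mu_0\wedge\ud_\eta H$, which is pulled back to $\Sigma$ and then integrated along the flow. You instead build $\mu_0$ explicitly by contraction, $\mu_0=-\iota_Z\omega^n$ with $(\ud_\eta H)(Z)=1$, and you obtain the transformation law directly in integrated form from $\varphi_t^*\omega^n=e^{nr_t}\omega^n$ and $\varphi_t^*\ud_\eta H=e^{r_t}\ud_\eta H$, using the pointwise fact that wedging with $\ud_\eta H$ annihilates exactly those forms whose pull-back to $\Sigma$ vanishes. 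This is a clean alternative: it avoids the Lie-derivative/integration step and gives for free that $\mu$ does not depend on the choice of $\mu_0$. One cosmetic point: your $\mu_0$ lives only on $U=\{\ud_\eta H\neq0\}$, whereas the statement asks for a form on all of $M$; multiplying by a bump function equal to $1$ near $\Sigma$ and supported in $U$ fixes this (the paper is equally casual about the analogous step).

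The one genuine defect is your closing paragraph. The claim that ``the flow does not fix $\Sigma$ but only permutes the leaves of $\cF$'' is false: since $\ud_\eta H=\iota_X\omega$, one has $(\ud_\eta H)(X)=\omega(X,X)=0$, so the Hamiltonian vector field $X$ is everywhere tangent to $\cF$; because $\cF$ is a regular foliation on the invariant set $U$ and an integral curve of a vector field tangent to a foliation stays in the leaf of its initial point, $\varphi_t(\Sigma)=\Sigma$ for all $t$. This invariance is what makes $\varphi_t^*\mu$ meaningful as a form on $\Sigma$ in the first place, and the paper invokes it explicitly (``Since the flow $(\varphi_t)$ preserves $\Sigma$, $i^*(\cL_X\mu_0)=\cL_X(i^*\mu_0)$''). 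As written, you only assert the literal identity $\varphi_t^*\mu=e^{(n-1)r_t}\mu$ ``when $\Sigma$ is invariant'', so your argument proves the proposition as stated only after adding this one-line observation; with it, your proof is complete.
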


\begin{proof}
    Let $\mu_1$ be a $(2n-1)$-form on $M$ such that $i^*\mu_1$
    is a volume form of $\Sigma$ (which is oriented by $\ud_\eta H$).
    By assumption, $(\ud_\eta H)_x\neq 0$ for $x\in\Sigma$ whereas
    $i^*\ud_\eta H = 0$ so $(\mu_1\wedge\ud_\eta H)_x \neq 0$
    for every $x\in\Sigma$.
    Since $\Sigma$ is an embedded leaf, there exists
    an open neighborhood $U$ of $\Sigma$ on which
    $\mu_1\wedge\ud_\eta H$ does not vanish.
    There exists $f:M\to\R$ that does not
    vanish on $U$ such that $f\mu_1\wedge\ud_\eta H = \omega^n$
    restricted to $U$.
    Let us show that
    $\mu_0:=f\mu_1$ and the volume form $\mu:=i^*\mu_0$ are
    the desired forms.

    Let us recall that $\cL_X\omega=\eta(X)\omega$
    and $\cL_X\ud_\eta H = \eta(X)\ud_\eta H$
    (Lemma~\ref{lem:rt}).
    Let us apply $\cL_X$ to the equation
    $\mu_0\wedge \ud_\eta H = \omega^n$:
    \begin{equation*}
        (\cL_X\mu_0)\wedge\ud_\eta H
        + \mu_0\wedge \eta(X)\ud_\eta H =
        n\eta(X)\omega^n.
    \end{equation*}
    Therefore,
    \begin{equation*}
        (\cL_X\mu_0)\wedge\ud_\eta H =
        (n-1)\eta(X)\mu_0\wedge \ud_\eta H
    \end{equation*}
    so that
    \begin{equation*}
        i^*(\cL_X\mu_0) = (n-1)\eta(X)i^*\mu_0.
    \end{equation*}
    Since the flow $(\varphi_t)$ preserves $\Sigma$,
    $i^*(\cL_X\mu_0) = \cL_X(i^*\mu_0)$ and
    the conclusion follows.
\end{proof}

\begin{corollary}
    Every embedded leaf of $\cF$
    outside $\{ H=0\}$ admits an invariant
    volume form
\end{corollary}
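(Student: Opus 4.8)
The plan is to take the volume form $\mu$ produced by Proposition~\ref{prop:leafvolume} and correct it by a power of $H$ so as to cancel the conformal factor $e^{(n-1)r_t}$. First I would record two facts to be combined. On the one hand, Proposition~\ref{prop:leafvolume} gives a volume form $\mu$ on the embedded leaf $\Sigma$ with $\varphi_t^*\mu = e^{(n-1)r_t}\mu$; here $\varphi_t$ genuinely restricts to $\Sigma$ because the Hamiltonian vector field $X$ is tangent to $\cF$ (indeed $\ud_\eta H(X)=(\iota_X\omega)(X)=\omega(X,X)=0$), so each flow line through a point of $\Sigma$ stays in $\Sigma$. On the other hand, Lemma~\ref{lem:rt} gives $H\circ\varphi_t = e^{r_t}H$, whence $|H|\circ\varphi_t = e^{r_t}|H|$ since $e^{r_t}>0$.

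Next I would check that $|H|$ restricts to a smooth, nowhere-vanishing positive function on $\Sigma$. Since $\Sigma$ is a connected leaf tangent to $\cF$ lying outside $\{H=0\}$, Lemma~\ref{lem:Hpath} (equivalently Corollary~\ref{cor:01law}) shows that $H$ has constant sign along $\Sigma$, so $(i^*|H|)^{-(n-1)}$ is a well-defined positive smooth function there. I would then set
\[
\nu := (i^*|H|)^{-(n-1)}\,\mu,
\]
which is again a volume form of $\Sigma$, and compute, using the two relations above,
\[
\varphi_t^*\nu = (|H|\circ\varphi_t)^{-(n-1)}\,\varphi_t^*\mu = \bigl(e^{r_t}|H|\bigr)^{-(n-1)} e^{(n-1)r_t}\mu = |H|^{-(n-1)}\mu = \nu.
\]
Thus $\nu$ is invariant, as desired.

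There is essentially no serious obstacle: the statement is a direct corollary of Proposition~\ref{prop:leafvolume} once one notices that the same winding factor $e^{r_t}$ governs both the transport of $\mu$ and of $H$, so that a suitable power of $|H|$ compensates it exactly. The only point needing care is that $(i^*|H|)^{-(n-1)}$ be a genuine smooth volume factor, which is guaranteed by the constant sign and nonvanishing of $H$ on a connected leaf outside $\{H=0\}$; note also that for $n=1$ the exponent is zero and $\mu$ itself is already invariant.
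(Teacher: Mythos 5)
Your proof is correct and is essentially the paper's own argument: the paper likewise takes the volume form $\mu$ from Proposition~\ref{prop:leafvolume} and observes that $\frac{\mu}{H^{n-1}}$ is invariant, which is exactly your cancellation of the factor $e^{(n-1)r_t}$ against $(H\circ\varphi_t)^{-(n-1)}=e^{-(n-1)r_t}H^{-(n-1)}$. Your additional remarks (constant sign of $H$ on the leaf via Lemma~\ref{lem:Hpath}, using $|H|$ for positivity) are just careful bookkeeping of the same idea.
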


\begin{proof}
    Let $\mu$ be the volume form associated
    with $\Sigma$ by Proposition~\ref{prop:leafvolume}.
    The volume form $\frac{\mu}{H^{n-1}}$ is invariant.
\end{proof}
  When the embedded leaf of $\cF$ is not compact, this invariant volume can be unbounded.

\section{A global decomposition of the phase space: conservative versus dissipative}
\label{se:CvsD}
 Let us introduce three   notions of attractors that will be used in different parts  of this article.

 \begin{itemize}
    \item  
An invariant  compact subset $A\subset M$ is a {\em weak attractor}
if there exists an open subset $U\supset A$,
called a basin of attraction of $A$, such that
$\bigcup_{x\in U} \omega(x) \subset A$   where $\omega(x)$ is
the omega-limit set of $x$.   The basin of attraction is not necessarily unique.
\item A subset $A\subset M$ is a {\em strong attractor}
if there exists an open subset $U\supset A$,
such that $\forall t>0, \varphi_t(\overline{U})\subset U$ and $A = \bigcap_{t>0} \varphi_t(U)$
(which implies that $A$ is compact and invariant).
\item an invariant  closed submanifold $N\subset M$ is {\em normally hyperbolically attractive} if there exists a tubular neighbourhood $V=j(N\times[-\varepsilon_0, \varepsilon_0])$ of $N$  where $j:N\times[-\varepsilon_0, \varepsilon_0]\hookrightarrow M$ 
is an embedding, $\tau>0$ and $a\in (0, 1)$ such that $\varphi_\tau^H(V)\subset {\rm Int}(V)$ and if we denote $V_\epsilon=j(N\times[-\varepsilon, \varepsilon])$, then
$$  \forall \varepsilon\in (0, \varepsilon_0], \varphi_{\tau}^H(V_\varepsilon)\subset V_{a\varepsilon}.
$$
\end{itemize}
Observe that a strong attractor is always a weak attractor.

\subsection{The conservative-dissipative decomposition}
\label{se:CD}

  We defined in the introduction the partition in invariant sets $M=\cC_+\sqcup\cD_+$ with 
\begin{equation}\label{eq:C}
    \cC_+ := \left\{ x\in M\ |\ \liminf_{t\to+\infty} 
    |r_t(x)| < +\infty \right\}
    = \left\{ x\in M\ |\ \liminf_{p\to+\infty} 
    |r_p(x)| < +\infty \right\}
\end{equation}
and 
\begin{equation}\label{eq:D}
    \cD_+ := \left\{ x\in M \ |\   \lim_{t\to +\infty}
    |r_t(x)| = +\infty \right\}.
\end{equation}
The second definition of $\cC_+$ is to be understood with $p\in\N$;
the equality between both definitions is due to
$|\partial_t r_t(x)|\leq \|\eta(X)\|_\infty <+\infty$
(see Lemma~\ref{lem:rt}).

\begin{proposition}\label{prop:CD}   Up to a set with zero Lebesgue measure, the set of positively recurrent points coincides with $\cC_+$. The $\omega$-limit set  of every point   in $\cD_+$ is in $\{ H=0\}$. Almost every point in $\cD_+$ is in $\{ H\neq 0\}$ and if $x\in \cD_+\cap \{ H\neq 0\}$,  $r_t(x)\to -\infty$ as $t\to+\infty$ and  every neighbourhood of     $\omega(x)=A$ contains a closed curve $\gamma$ such that $\int_\gamma\eta\neq0$. Hence A is infinite.\\
    Moreover, for every embedded leaf $\Sigma$
    included in $\{ H \neq 0\}$ with a proper inclusion
    map,   up to a set with zero  $(n-1)$-dimensional volume,  
    $\cC_+\cap\Sigma$ coincides with the  set of positively recurrent points in $\Sigma$. 
\end{proposition}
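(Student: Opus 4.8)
\emph{The plan is to dispatch the transport consequences (claims on $\cD_+$) by elementary arguments using Lemma~\ref{lem:rt}, and to reduce the two measure-theoretic coincidences to a Poincar\'e recurrence argument, which is the real crux.} Throughout write $\mu:=\omega^n$ and recall from Lemma~\ref{lem:rt} that $H\circ\varphi_t=e^{r_t}H$ and $\varphi_t^*\mu=e^{nr_t}\mu$. Since $M$ is compact, $H$ is bounded, so for $x\in\{H\neq0\}$ the identity $|H\circ\varphi_t(x)|=e^{r_t(x)}|H(x)|\le\|H\|_\infty$ forces $r_t(x)$ to be bounded above; as $x\in\cD_+$ means $|r_t|\to+\infty$ and $t\mapsto r_t$ is continuous, this gives $r_t(x)\to-\infty$, proving the claim on the sign of $r_t$. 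For the $\omega$-limit claim, if $y=\lim_k\varphi_{t_k}(x)\in\omega(x)$ then $H(y)=\lim_k e^{r_{t_k}(x)}H(x)$, which is $0$ both when $H(x)=0$ (the orbit stays in the invariant set $\{H=0\}$) and when $H(x)\neq0$ (then $r_{t_k}\to-\infty$). For the claim that almost every point of $\cD_+$ lies in $\{H\neq0\}$, I would split $\{H=0\}=(\{H=0\}\cap\{\ud H\neq0\})\sqcup(\{H=0\}\cap\{\ud H=0\})$: the first set is locally a smooth hypersurface, hence $\mu$-null; on the second, $\iota_X\omega=\ud_\eta H=\ud H$ vanishes, so $X=0$ there and these are fixed points, for which $r_t\equiv0$ and which therefore lie in $\cC_+$. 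Hence $\cD_+\cap\{H=0\}$ is $\mu$-null.

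For the loop statement, let $x\in\cD_+\cap\{H\neq0\}$ and $A=\omega(x)$; by compactness $\mathrm{dist}(\varphi_t(x),A)\to0$. Given a neighbourhood $V\supseteq A$, I may shrink it so that $\overline V$ is compact and $\varphi_t(x)\in V$ for $t\ge T$. If every loop $\gamma\subset V$ satisfied $\int_\gamma\eta=0$, then $[\eta|_V]=0\in H^1(V;\R)$, so $\eta=\ud\theta$ on $V$ with $\theta$ bounded on $\overline V$; integrating $\tfrac{\ud}{\ud t}r_t=\eta(X\circ\varphi_t)=\tfrac{\ud}{\ud t}(\theta\circ\varphi_t)$ along the orbit for $t\ge T$ gives $r_t(x)=r_T(x)+\theta(\varphi_t(x))-\theta(\varphi_T(x))$, which is bounded, contradicting $r_t(x)\to-\infty$. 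Thus every neighbourhood of $A$ carries a loop with nonzero $\eta$-period; in particular $A$ is infinite, since a finite invariant set is a union of fixed points and admits a simply connected neighbourhood on which $\eta$ is exact.

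Now for the coincidence of $\cC_+$ with the positively recurrent points up to $\mu$-null sets. One inclusion is direct: if $x\in\{H\neq0\}$ is forward recurrent, $\varphi_{t_k}(x)\to x$, then $e^{r_{t_k}(x)}=H(\varphi_{t_k}x)/H(x)\to1$, so $r_{t_k}\to0$ and $x\in\cC_+$; on $\{H=0\}$ the recurrent points are, up to the $\mu$-null hypersurface part, the fixed points, again in $\cC_+$. For the reverse inclusion I would use the invariant volume form $\nu:=|H|^{-n}\mu$ on $\{H\neq0\}$ (invariant since $\varphi_t^*\nu=(e^{r_t}|H|)^{-n}e^{nr_t}\mu=\nu$, cf. the corollary to Proposition~\ref{prop:leafvolume}), which has the same null sets as $\mu$ there. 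Writing $K_m:=\{1/m\le|H|\le m\}$, one has $\nu(K_m)<\infty$ and $\{H\neq0\}=\bigcup_m K_m$, and for $x\in\{H\neq0\}$ the condition $x\in\cC_+$ is equivalent to the forward orbit meeting some $K_m$ at a sequence of times tending to $+\infty$. Applying Poincar\'e recurrence to the first-return map of the ($\nu$-preserving) time-one map on the \emph{finite}-measure set $K_m$ shows that $\nu$-a.e.\ such point is genuinely recurrent; since recurrence is an orbit property and each $\varphi_{-s}$ is nonsingular, this passes from $K_m$ to all points whose orbit enters $K_m$ infinitely often, and the union over $m$ gives that $\mu$-a.e.\ point of $\cC_+\cap\{H\neq0\}$ is recurrent. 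Together with the $\{H=0\}$ discussion this yields the global coincidence.

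For the leafwise statement, Proposition~\ref{prop:leafvolume} furnishes on an embedded leaf $\Sigma\subset\{H\neq0\}$ a volume form with $\varphi_t^*\mu_\Sigma=e^{(n-1)r_t}\mu_\Sigma$, so that $\nu_\Sigma:=|H|^{-(n-1)}\mu_\Sigma$ is an invariant volume form on $\Sigma$ (the one relevant to the null sets in the statement). Properness of the inclusion makes $\Sigma\cap K_m$ compact, hence $\nu_\Sigma(\Sigma\cap K_m)<\infty$, and on $\Sigma\subset\{H\neq0\}$ the equivalences used above (recurrent $\Rightarrow r_{t_k}\to0\Rightarrow\cC_+$, and $x\in\cC_+$ iff the orbit meets some $\Sigma\cap K_m$ infinitely often) hold verbatim. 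Running the identical finite-measure exhaustion and first-return Poincar\'e argument inside $(\Sigma,\nu_\Sigma)$ then gives, up to a $\nu_\Sigma$-null set, that $\cC_+\cap\Sigma$ coincides with the positively recurrent points of $\Sigma$. \emph{The main obstacle throughout is exactly this recurrence step:} the invariant measures $\nu$ and $\nu_\Sigma$ are in general infinite, so Poincar\'e recurrence cannot be applied globally and must be localized to the finite-measure level-slabs $K_m$ via the induced map, the delicate point being to upgrade ``enters $K_m$ infinitely often'' to genuine recurrence for a.e.\ point.
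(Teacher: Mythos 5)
Your proof follows essentially the same route as the paper's: the transport identities of Lemma~\ref{lem:rt}, the splitting of $\{H=0\}$ into the $\mu$-null hypersurface part $\{\ud H\neq 0\}$ and the fixed-point part $\{\ud H=0\}$, the invariant measure $|H|^{-n}\omega^n$ coming from the invariance of $\omega/H$, the exhaustion of $\{H\neq0\}$ by finite-measure slabs (your $K_m$ is the paper's $\cH_k$, with the restriction to points returning infinitely often playing the role of the paper's $\cC'_k$), the first-return map plus Poincar\'e recurrence, and the leafwise analogue via Proposition~\ref{prop:leafvolume} with $\nu_\Sigma=|H|^{-(n-1)}\mu_\Sigma$ and properness giving compactness of $\Sigma\cap K_m$. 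You also correctly identified the crux: the invariant measures are infinite, so recurrence must be localized to finite-measure sets via the induced map, exactly as the paper does.

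The one step where you diverge, and where your argument as written has a gap, is the loop statement. You assume every loop in $V$ has zero $\eta$-period, write $\eta=\ud\theta$ on $V$, and assert that $\theta$ is bounded once $V$ is shrunk to have compact closure. Compact closure alone does not bound a primitive: an open set with compact closure can have unbounded intrinsic diameter. For instance, a thin simply connected strip spiralling arbitrarily many times in a direction along which $\eta$ has nonzero periods in $M$ carries an exact restriction of $\eta$ whose primitive grows with each winding, even though the closure is compact. So ``shrink so that $\overline V$ is compact'' must be replaced by a shrinking with controlled topology. The paper's device does exactly this: cover the (compact, connected) set $A=\omega(x)$ by finitely many path-connected contractible open sets $V_j\subset V$, on each of which path-integrals of $\eta$ are uniformly bounded by some $K$; then, rather than invoking exactness, pick $j_0$ visited at arbitrarily large times, choose $t_1>t_0>T$ with $r_{t_0}(x)-r_{t_1}(x)>K$ and $\varphi_{t_0}(x),\varphi_{t_1}(x)\in V_{j_0}$, and close the orbit arc inside $V_{j_0}$ to produce a loop with nonzero period. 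Equivalently, with such a cover your contradiction route goes through, since any primitive on the (connected, via $A$) finite union then has oscillation bounded in terms of $K$ and the number of pieces. With that repair your proof is complete and coincides in substance with the paper's.
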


\begin{proof}
    Let us first remark that $\omega^n$-almost every point
    of $\{ H=0 \}$ is trivially recurrent   and in $\cC_+$:
     every point of  the subset $\{ H= 0\}\cap \{ \ud H = 0\}$
    is fixed by the dynamics whereas
    $\{ H= 0\}\cap \{ \ud H \neq 0\}$ is
    negligible. 
    
    Since $H\circ\varphi_t = e^{r_t}H$ by  Lemma \ref{lem:rt},  a point $x\in M$
    satisfying $r_t(x)\to +\infty$ must be in
    $\{ H=0\}\cap\{ \ud H \neq 0\}$ which is a negligible
    set. Hence if $x\in \cD_+\cap \{ H\neq 0\}$,  $r_t(x)\to -\infty$ as $t\to+\infty$ and then $\lim_{t\to\infty} H(\varphi_tx)=0$ and $x$ is not positively recurrent.
    
    Let us now show that almost every point of
    $\cC'_+:=\cC_+\cap\{ H\neq 0\}$ is recurrent.
    According to Lemma~\ref{lem:rt},
    $H\circ \varphi_t = e^{r_t} H$, so
    \begin{equation*}
    \cC'_+ = 
    \left\{ x\in M\ |\ \limsup_{p\to+\infty} |H(\varphi_p(x))| \neq 0\right\}.
    \end{equation*}
    For $k\in\N^*$, let us define the following compact sets
    \begin{equation*}
        \cH_k := \left\{ x\in M\ |\ |H(x)|\geq \frac{1}{k} \right\}.
    \end{equation*}
    Then $\cC'_+$ is the increasing union of the $\cC'_k$'s defined by
    \begin{equation*}
        \cC'_k := \cH_k \cap \bigcap_{N\in\N}\bigcup_{p\geq N}
        \varphi_p^{-1}(\cH_k),\quad
        \forall k\in\N^*.
    \end{equation*}
    For each $k\in\N^*$, there is a well-defined first-return measurable map
    $f_k:\cC'_k\righttoleftarrow$, $f_k(x):= \varphi_{n(x)}(x)$ where
    $n(x):=\min \{ p\in\N^*\ |\ \varphi_p(x)\in \cH_k\}$.
    Since the 2-form $\frac{\omega}{H}$ of $\{ H\neq 0\}$
    is preserved by $\varphi_p$
    for all $p\in\N$ (Lemma~\ref{lem:rt}),
    the measurable maps $f_k$'s are preserving the measure $\nu:A\mapsto \int_A
    \frac{\omega^n}{H^n}$.
    Since, for $k\in\N^*$, $\cC'_k$ has a countable basis of open sets and a measure
    $\nu(\cC'_k)\leq \nu(\cH_k)\leq k^n\omega^n(M)$ which is finite,
    the Poincaré's recurrence theorem implies that almost every
    point of $\cC'_k$ is recurrent for $f_k$.\\
    Let us prove that if $x\in \cD_+\cap \{ H\neq 0\}$, every neighbourhood $V$ of $\omega(x)$ contains a closed curve $\gamma$ such that $\int_\gamma \eta\neq 0$. By compacity of $\omega(x)$,
    one can assume that $V$ is a finite union of
    path-connected contractible open sets $V_j$.
    Let $K>0$ be such that
    $|\int_\gamma \eta |< K$
    for every $\gamma :[0,1]\to V_j$ and every $j$
    (where $\eta$ denotes the Lee form). Let $T>0$ be such that for all $t\geq T$, $\varphi_t(x)\in V$ and 
   let $j_0$ be such that there exist arbitrarily
    large $t$'s satisfying $\varphi_t(x)\in V_{j_0}$.
    Let $t_1> t_0>T$ be such that $r_{t_0}(x) - r_{t_1}(x) > K$
    and $\varphi_{t_i}(x)\in V_{j_0}$ for $i\in\{ 0,1\}$.
    Then concatenating $t\mapsto \varphi_t(x)$,
    $t\in [t_0,t_1]$, with a path of $V_{j_0}$ connecting
    $\varphi_{t_1}(x)$ to $\varphi_{t_0}(x)$,
    one gets a loop $\gamma:I\to V$ satisfying
    $\int_\gamma \eta \neq 0$.
    The conclusion follows.
    
    Finally, let $\Sigma\subset\{ H\neq 0\}$
    be an embedded leaf of $\cF$ with a proper
    inclusion map.   We have seen that no point in $\cD_+\cap \Sigma$ is  positively recurrent.
    Since $\cC_+\cap\Sigma = \cC_+'\cap\Sigma$,
    it is enough to
    prove that almost every point of $\cC'_k\cap\Sigma$
    is recurrent, for all $k\in\N^*$.
    Let $\mu$ be the volume form associated with $\Sigma$
    by Proposition~\ref{prop:leafvolume},
    then the first return maps $f_k|_{\Sigma\cap\cC'_k}$
    are preserving the measure
    $\nu_\Sigma:A\mapsto \int_A \frac{\mu}{H^{n-1}}$
    defined on $\Sigma$.
    Since $\Sigma\hookrightarrow\{ H\neq 0\}$
    is proper, the $\Sigma\cap\cC'_k$'s
    are compact, so the $\nu_\Sigma(\Sigma\cap\cC'_k)$'s
    are finite.
    The conclusion follows.
\end{proof}
We recall that  $U\subset M$ is a  wandering set if 
$\exists T>0$,
 $\forall 
t\geq T, \varphi_t(U)\cap  U=\emptyset$.
\begin{corollary}\label{Cwandering}
Let $U$ be a wandering set. Then   almost every point of $U$
belongs to $\cD_+\cap\cD_-$ and satisfies $\lim_{t\to+\infty}H(\varphi_t^H(x))=\lim_{t\to-\infty}H(\varphi_t^H(x))=0$.
\end{corollary}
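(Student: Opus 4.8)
The plan is to leverage Proposition~\ref{prop:CD} directly, since a wandering set contains no recurrent points, so almost every point must fall into the dissipative set in both time directions. First I would recall the key dichotomy from Proposition~\ref{prop:CD}: up to a set of zero Lebesgue measure, the positively recurrent points coincide with $\cC_+$, hence the positively non-recurrent points coincide with $\cD_+$ up to measure zero. I would then observe that a wandering set $U$ contains no positively recurrent points at all: if $x\in U$ and $t\geq T$ implies $\varphi_t(U)\cap U=\emptyset$, then in particular $\varphi_t(x)\notin U$ for all $t\geq T$, so no orbit starting in $U$ can return arbitrarily close to $x$ while staying in $U$ — more carefully, since $U$ is open (or contains a neighborhood of $x$ if we work with its interior), the trajectory leaves $U$ permanently after time $T$, precluding recurrence. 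Therefore $U\subset M\setminus(\text{positively recurrent points})$, and by the Proposition, $U\subset\cD_+$ up to a set of zero Lebesgue measure.

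The symmetric argument in negative time is the second step. The wandering condition $\varphi_t(U)\cap U=\emptyset$ for $t\geq T$ is equivalent, after applying $\varphi_{-t}$, to $\varphi_{-t}(U)\cap U=\emptyset$ for $t\geq T$, so $U$ is also wandering for the reversed flow. Applying the time-reversed version of Proposition~\ref{prop:CD} — which the paper has set up via the symmetric definitions of $\cC_-$ and $\cD_-$ in the introduction — gives that $U\subset\cD_-$ up to a set of zero Lebesgue measure. Intersecting the two full-measure conclusions, almost every point of $U$ lies in $\cD_+\cap\cD_-$.

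The final step is the claim about $H$ converging to $0$ along the orbit in both time directions. For this I would invoke the part of Proposition~\ref{prop:CD} stating that almost every point in $\cD_+$ lies in $\{H\neq 0\}$, and that for $x\in\cD_+\cap\{H\neq 0\}$ one has $r_t(x)\to-\infty$ as $t\to+\infty$. Combined with the identity $H\circ\varphi_t=e^{r_t}H$ from Lemma~\ref{lem:rt}, this yields $\lim_{t\to+\infty}H(\varphi_t^H(x))=0$ for almost every $x\in U$. The time-reversed statement gives $\lim_{t\to-\infty}H(\varphi_t^H(x))=0$ for almost every $x\in U$ as well, and intersecting the relevant full-measure sets completes the proof.

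The main subtlety, rather than a genuine obstacle, is the careful bookkeeping of the several ``up to measure zero'' exceptional sets and the verification that the wandering hypothesis genuinely excludes positive recurrence for the reversed flow as cleanly as for the forward flow; the wandering condition is manifestly time-symmetric once one rewrites it via $\varphi_{-t}$, so this reduces to a routine check. One should also confirm that $U$ (or its points) genuinely avoids the recurrent set: a point is positively recurrent if its forward orbit returns to every neighborhood, which is incompatible with the orbit permanently exiting the open set $U$ after time $T$, so the inclusion $U\subset\cD_+$ up to null sets is immediate from the Proposition.
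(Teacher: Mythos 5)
Your proof is correct and is essentially the paper's own argument: Corollary~\ref{Cwandering} is stated there without proof, as an immediate consequence of Proposition~\ref{prop:CD} applied in both time directions (using the time-symmetry of the wandering condition and the identity $H\circ\varphi_t=e^{r_t}H$ from Lemma~\ref{lem:rt}), exactly as you do. The only point worth noting is that the paper's definition of a wandering set does not literally require $U$ to be open, so your reduction to topological non-recurrence relies on the openness you assume; for a merely measurable wandering set one would instead apply Poincar\'e recurrence for the first-return maps $f_k$ directly to the sets $U\cap\cC'_k$ as in the proof of Proposition~\ref{prop:CD}, but under the standard open-set reading (the one the paper uses when it speaks of wandering points) your argument is complete.
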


\begin{corollary}\label{cor:attractorLee}
    Let $A$ be a weak attractor with basin $U$,
    then for almost every point $x$ of $U\setminus A$,
    $r_t(x)\to -\infty$ as $t\to+\infty$.
    In particular, the Lee form is not exact in any neighborhood
    of ${A}\cap \{ H = 0\}$.
\end{corollary}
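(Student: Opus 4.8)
The plan is to read off both assertions from Proposition~\ref{prop:CD} once I show that almost every point of $U\setminus A$ lies in $\cD_+\cap\{H\neq 0\}$. First I would use the weak attractor property: every $x\in U$ satisfies $\omega(x)\subset A$. I then observe that no point of $U\setminus A$ can be positively recurrent, since a positively recurrent $x$ satisfies $x\in\omega(x)\subset A$, contradicting $x\notin A$. Hence $U\setminus A$ is contained in the set of positively non-recurrent points. By Proposition~\ref{prop:CD}, the positively recurrent points coincide with $\cC_+$ up to a Lebesgue-null set, so their complement coincides with $\cD_+$ up to a null set; therefore almost every point of $U\setminus A$ belongs to $\cD_+$.

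Next I would invoke the remaining conclusions of Proposition~\ref{prop:CD}: almost every point of $\cD_+$ lies in $\{H\neq 0\}$, and for every $x\in\cD_+\cap\{H\neq 0\}$ one has $r_t(x)\to-\infty$ as $t\to+\infty$. Combining this with the previous step, almost every $x\in U\setminus A$ satisfies $x\in\cD_+\cap\{H\neq 0\}$ and $r_t(x)\to-\infty$, which is exactly the first assertion.

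For the \emph{in particular} statement I would argue by contradiction on the non-exactness. Assuming $A\neq M$, the set $U\setminus A$ is a nonempty open set, hence of positive $\omega^n$-measure, so I may pick a witness $x\in(U\setminus A)\cap\cD_+\cap\{H\neq 0\}$. On one hand $\omega(x)\subset A$ by the attractor property; on the other hand, since $x\in\cD_+$, Proposition~\ref{prop:CD} gives $\omega(x)\subset\{H=0\}$, so $\omega(x)\subset A\cap\{H=0\}$. Consequently any neighborhood $V$ of $A\cap\{H=0\}$ is a neighborhood of $\omega(x)$, and the last clause of Proposition~\ref{prop:CD} produces a closed curve $\gamma$ in $V$ with $\int_\gamma\eta\neq 0$. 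If $\eta=\ud g$ were exact on $V$, Stokes' theorem would force $\int_\gamma\eta=0$, a contradiction; thus $\eta$ is not exact on any neighborhood of $A\cap\{H=0\}$. The main point to watch is the bookkeeping of the several almost-everywhere clauses: one must ensure a single point $x$ simultaneously realizes $x\in U\setminus A$, $x\in\cD_+$, $x\in\{H\neq 0\}$ and $\omega(x)\subset\{H=0\}$, which is precisely why the positivity of the measure of $U\setminus A$ (i.e. $A\neq M$) is needed to extract the witness loop.
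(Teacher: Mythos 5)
Your proof is correct and follows exactly the paper's route: the paper's own proof is the one-line observation that points of $U\setminus A$ are not positively recurrent, hence almost all of them lie in $\cD_+$ by Proposition~\ref{prop:CD}, with the remaining conclusions ($r_t\to-\infty$ on $\cD_+\cap\{H\neq 0\}$, and the loop $\gamma\subset V$ with $\int_\gamma\eta\neq 0$ around $\omega(x)\subset A\cap\{H=0\}$) read off from the same proposition, precisely as you spell out. Your explicit bookkeeping of the almost-everywhere clauses and of the nondegeneracy condition $A\neq M$ needed to extract a witness point is just a careful unwinding of what the paper leaves implicit, not a different argument.
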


As a consequence, an attractor (or repeller) of $(\varphi_t)$ is never
a finite set.

\begin{proof}[Proof of Corollary \ref{cor:attractorLee}]
    By definition, points of $U\setminus A$ are not recurrent
    so almost every point of $U\setminus A$ is in $\cD_+$
    by Proposition~\ref{prop:CD}. The same proposition implies the other results.
\end{proof}

\subsection{Almost everywhere coincidence of past and future}
  In the remaining of the article, we will say that a property is almost everywhere satisfied with reference to every volume form. 
 We have of course that $\cC_-$ coincides with the set of negatively recurrent points. What is surprising is that the set of negatively recurrent points coincide with the set of positively recurrent points up to a set with zero volume.
\begin{proposition}\label{prop:C_+} The sets $\cC_+$ and $\cC_-$ coincide up to a set with zero volume. \\
Moreover, for every embedded leaf $\Sigma$
    included in $\{ H \neq 0\}$ with a proper inclusion
    map,   up to a set with zero  $(n-1)$-dimensional volume,  
    $\cC_+\cap\Sigma$ coincides $\cC_-\cap\Sigma$.

\end{proposition}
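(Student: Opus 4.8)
The plan is to show that the sets $\cC_+$ and $\cC_-$ agree modulo a null set by exhibiting, on the relevant part of phase space, a measure-preserving structure to which a two-sided recurrence argument applies. By Proposition~\ref{prop:CD}, up to null sets $\cC_+$ coincides with the positively recurrent points and (switching $H$ to $-H$) $\cC_-$ with the negatively recurrent points; moreover, outside $\{H=0\}$, points of $\cD_+$ satisfy $H\circ\varphi_t\to 0$ and analogously for $\cD_-$. Since almost every point of $\{H=0\}$ is fixed or lies in the negligible set $\{\ud H\neq0\}\cap\{H=0\}$, these points are simultaneously in $\cC_+$ and $\cC_-$, so the whole question reduces to comparing positive and negative recurrence on $\{H\neq0\}$.

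First I would work on $\{H\neq 0\}$ with the invariant $2$-form $\frac{1}{H}\omega$ from Lemma~\ref{lem:rt}, which gives the genuinely invariant measure $\nu:A\mapsto\int_A \frac{\omega^n}{H^n}$ preserved by the full flow $(\varphi_t)$ there. The key point is that $\nu$ is invariant for both positive and negative times, so Poincar\'e recurrence applies symmetrically: on any set of finite $\nu$-measure, the set of positively recurrent points and the set of negatively recurrent points each have full measure, hence coincide up to a null set. I would exhaust $\{H\neq0\}$ by the compact sets $\cH_k=\{|H|\ge 1/k\}$ as in the proof of Proposition~\ref{prop:CD}; each has finite $\nu$-measure, and on $\cC'_k$ the first-return map $f_k$ preserves $\nu$. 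Since $f_k$ is an invertible measure-preserving transformation on a finite measure space, two-sided Poincar\'e recurrence yields that $f_k$-forward-recurrent and $f_k$-backward-recurrent points coincide almost everywhere; translating back to the flow and taking the increasing union over $k$ gives the coincidence of positive and negative recurrence, hence of $\cC_+$ and $\cC_-$, up to $\omega^n$-null sets on $\{H\neq0\}$.

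For the second statement, I would repeat the argument on an embedded leaf $\Sigma\subset\{H\neq0\}$ with proper inclusion, using the volume form $\mu$ from Proposition~\ref{prop:leafvolume} and the induced measure $\nu_\Sigma:A\mapsto\int_A \frac{\mu}{H^{n-1}}$, which is preserved by the first-return maps $f_k|_{\Sigma\cap\cC'_k}$. Properness forces $\Sigma\cap\cC'_k$ to be compact, so $\nu_\Sigma$ is finite there, and the same two-sided recurrence argument gives the coincidence of $\cC_+\cap\Sigma$ and $\cC_-\cap\Sigma$ up to an $(n-1)$-dimensional null set.

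The main obstacle I anticipate is the careful bookkeeping of the first-return maps: one must check that the first-return map $f_k$ used for positive recurrence is genuinely invertible (its inverse being the backward first-return map) and that its measure invariance holds for the two-sided dynamics, so that forward and backward recurrence are compared for the \emph{same} transformation rather than two a priori unrelated ones. Establishing that the forward- and backward-recurrent sets of a single invertible $\nu$-preserving map agree almost everywhere is the crux, and it follows from applying Poincar\'e recurrence to both $f_k$ and $f_k^{-1}$ on the finite measure space $(\cC'_k,\nu)$.
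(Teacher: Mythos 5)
Your proposal is correct and follows essentially the same route as the paper's proof: the paper likewise reuses the sets $\cC'_k$, the finite invariant measure $\frac{1}{H^n}\omega^n$ (resp.\ $\frac{\mu}{H^{n-1}}$ on a leaf), and the first-return maps $f_k$ from the proof of Proposition~\ref{prop:CD}, deducing from Poincar\'e recurrence that almost every point of $\cC'_k$ is negatively recurrent for $f_k$, hence $\cC_+\subset\cC_-$ up to a null set, with the reverse inclusion by symmetry. Your explicit verification that $f_k$ is invertible (mod null sets) so that backward recurrence is legitimate is exactly the point the paper leaves implicit, so this is a faithful, slightly more detailed rendering of the same argument.
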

\begin{proof}
We will prove that up to a set with zero volume $\cC_+\subset \cC_-$ and we will deduce the first part of Proposition \ref{prop:C_+}.
We keep the notation of the proof of Proposition \ref{prop:CD}. The first return map  $f_k:\cC'_k\righttoleftarrow$ preserves the finite volume $\frac{1}{H^n}\omega^n$, hence almost every point of $\cC'_k$ is negatively recurrent for $f_k$. This implies that up to a set with zero volume, $\cC'_+$ and hence $\cC_+$ is a subset of $\cC_-$.\\
The proof of the last part is similar. 
\end{proof}
\subsection{An example where $\cC_+\neq \cC_-$}
Adapting the construction made in Section \ref{se:oscillating} and the shadowing lemma, it is not hard to obtain an orbit that is negatively dissipative and positively conservative, i.e.  such that $\cC_-\neq\cC_+$.

\subsection{Boundedness of $r_t$ and invariant measures}

Let us assume that $L\subset M$ is an invariant measurable set of
the dynamics on which $(t,x)\mapsto r_t(x)$ is
a bounded map $\R\times L\to \R$
(in particular $L\subset  \cC_+\cap\cC_-$).
Inspired by the proof of \cite[Theorem~5.1.13]{KatokHasselblatt},
let us define the bounded measurable map $h:L\to\R$,
\begin{equation}\label{eq:h}
    h(x) := \sup_{t\in\R} r_t(x).
\end{equation}
Then $h\circ\varphi_t = h - r_t$, so that for instance Lemma~\ref{lem:rt} implies
that, restricted to $L$,
\begin{equation*}
    \varphi_t^*(e^h\omega)=e^h\omega,\ 
    \varphi_t^*(e^h\ud_\eta H)=e^h\ud_\eta H
    \text{ and }
    (e^hH)\circ\varphi_t = e^h H,\quad
    \forall t\in\R.
\end{equation*}

\begin{corollary}\label{cor:rtBoundedMeasure}
    An invariant measurable set $L\subset M$ of positive measure on which
    $(t,x)\mapsto r_t(x)$ is bounded admits an invariant Borel measure
    of positive density.
\end{corollary}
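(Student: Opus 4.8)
The plan is to promote the invariant $2$-form $e^h\omega$ that was just produced to an invariant volume form by taking its top exterior power. First I would recall that the standing hypothesis makes $h := \sup_{t\in\R} r_t$ a well-defined bounded measurable function on $L$ satisfying $h\circ\varphi_t = h - r_t$. This identity rests only on the cocycle relation $r_{t+s} = r_t + r_s\circ\varphi_t$ coming straight from the defining integral of $r_t$: indeed $h\circ\varphi_t(x) = \sup_s r_s(\varphi_t(x)) = \sup_s\big(r_{t+s}(x)-r_t(x)\big) = h(x)-r_t(x)$. Measurability of $h$ is handled by writing $h = \sup_{t\in\Q} r_t$, a countable supremum of functions continuous in $x$.

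Next, since $\varphi_t^*(e^h\omega) = e^h\omega$ on $L$, raising to the $n$-th power gives the invariant top-degree form $\varphi_t^*(e^{nh}\omega^n) = e^{nh}\omega^n$; equivalently one verifies it directly from Lemma~\ref{lem:rt}, namely $\varphi_t^*(e^{nh}\omega^n) = e^{n(h-r_t)}(e^{r_t}\omega)^n = e^{nh}\omega^n$. (Note the exponent $n$ in $e^{nh}$ is forced: $e^h\omega^n$ alone would pick up an extra factor $e^{(n-1)r_t}$.) Since $\omega^n$ is a volume form on $M$, this says precisely that the density $e^{nh}$ against $\omega^n$ is flow-invariant along $L$.

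I would then define the Borel measure $\mu(A) := \int_{A\cap L} e^{nh}\,\omega^n$. Because $h$ is bounded, $e^{nh}$ is bounded both above and away from $0$, so $\mu$ is absolutely continuous with respect to $\omega^n$ with positive bounded density, and $L$ having positive measure makes $\mu$ nontrivial and finite. Invariance is the change-of-variables computation
\[
\mu(\varphi_t(B)) = \int_{\varphi_t(B)\cap L} e^{nh}\omega^n = \int_{B\cap L}\varphi_t^*\!\big(e^{nh}\omega^n\big) = \int_{B\cap L} e^{nh}\omega^n = \mu(B),
\]
where I use that $L$ is flow-invariant and that $\varphi_t$, being isotopic to the identity, preserves orientation.

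The computations here are entirely routine; the only point requiring a word of care is that $h$ is \emph{a priori} defined only on the measurable set $L$ rather than on an open set, so $e^h\omega$ and $e^{nh}\omega^n$ are not honest smooth forms on all of $M$. This is harmless because $L$ is invariant and $\omega,\omega^n$ are globally defined, so every pullback identity above is a genuine equality of forms after restriction to $L$, and the conclusion is phrased as an invariance of measures. I expect this bookkeeping — reading the form-level invariance as an identity of measures on a merely measurable invariant set — to be the only (very mild) obstacle.
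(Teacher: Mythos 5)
Your proof is correct and follows essentially the same route as the paper: the paper also sets $h:=\sup_{t\in\R}r_t$, notes $h\circ\varphi_t = h-r_t$ and the resulting invariance $\varphi_t^*(e^{nh}\omega^n)=e^{nh}\omega^n$ on $L$, and takes the measure $A\mapsto\int_A e^{nh}\omega^n$ as the invariant measure of positive density. Your additional remarks (the cocycle identity behind $h\circ\varphi_t=h-r_t$, measurability via $\sup_{t\in\Q}$, and the necessity of the exponent $n$) are just explicit versions of details the paper leaves implicit.
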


\begin{proof}
    The measure $A\mapsto \int_A e^{nh}\omega^n$ is positive and invariant.
\end{proof}

\begin{corollary}\label{cor:rtBoundedLeafMeasure}
    If $L$ is an embedded leaf of $\cF$
    on which $(t,x)\mapsto r_t(x)$ is bounded,
    then it admits an invariant Borel measure of 
    positive
    density.
\end{corollary}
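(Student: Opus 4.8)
The goal is to establish Corollary~\ref{cor:rtBoundedLeafMeasure}: if $L$ is an embedded leaf of $\cF$ on which $(t,x)\mapsto r_t(x)$ is bounded, then $L$ admits an invariant Borel measure of positive density. The plan is to mirror the argument of Corollary~\ref{cor:rtBoundedMeasure}, but to work with the volume form on the leaf supplied by Proposition~\ref{prop:leafvolume} rather than with $\omega^n$ on the ambient manifold. The key mechanism is already in place: the bounded measurable map $h(x):=\sup_{t\in\R} r_t(x)$ defined in~\eqref{eq:h} satisfies the cocycle-type identity $h\circ\varphi_t = h - r_t$.

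First I would invoke Proposition~\ref{prop:leafvolume} to obtain a volume form $\mu$ on $L$ satisfying $\varphi_t^*\mu = e^{(n-1)r_t}\mu$. The factor here is $e^{(n-1)r_t}$ rather than $e^{n r_t}$, which is the only substantive difference from the ambient case, so the correct weight to use is $e^{(n-1)h}$ instead of $e^{nh}$. Then I would compute, using $h\circ\varphi_t = h - r_t$, that
\begin{equation*}
    \varphi_t^*\bigl(e^{(n-1)h}\mu\bigr)
    = e^{(n-1)(h\circ\varphi_t)}\,\varphi_t^*\mu
    = e^{(n-1)(h - r_t)}\,e^{(n-1)r_t}\mu
    = e^{(n-1)h}\mu,
\end{equation*}
so the weighted form $e^{(n-1)h}\mu$ is genuinely $\varphi_t$-invariant on $L$.

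From there the conclusion is immediate: the measure $A\mapsto \int_A e^{(n-1)h}\mu$ is a Borel measure on $L$ that is invariant under the flow, and since $h$ is bounded (hypothesis of boundedness of $r_t$) the density $e^{(n-1)h}$ is bounded above and below away from $0$, hence positive. I would remark that the boundedness of $h$, which is what guarantees both that $h$ is well defined as a finite supremum and that the density stays positive, follows from the standing assumption that $(t,x)\mapsto r_t(x)$ is bounded on $L$; this is the same hypothesis that placed $L$ inside $\cC_+\cap\cC_-$ in the preceding discussion.

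I do not expect any serious obstacle here, since the corollary is a direct transcription of Corollary~\ref{cor:rtBoundedMeasure} to the leaf, with $\omega^n$ replaced by $\mu$ and the exponent $n$ replaced by $n-1$. The only point requiring a moment's care is getting the correct exponent: one must read off from Proposition~\ref{prop:leafvolume} that the transformation weight on the leaf is $e^{(n-1)r_t}$, and match the weight in the invariant density accordingly. Everything else is the same bounded-coboundary trick inherited from \cite[Theorem~5.1.13]{KatokHasselblatt}.
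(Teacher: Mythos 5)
Your proof is correct and is essentially identical to the paper's: the paper also defines the invariant measure as $A\mapsto \int_A e^{(n-1)h}\mu$, where $\mu$ is the leaf volume form from Proposition~\ref{prop:leafvolume}. You simply spell out the invariance computation via $h\circ\varphi_t = h - r_t$, which the paper leaves implicit after having set it up before Corollary~\ref{cor:rtBoundedMeasure}.
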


\begin{proof}
    The desired measure is $A\mapsto \int_A e^{(n-1)h}\mu$,
    where $\mu$ is given by Proposition~\ref{prop:leafvolume}.
\end{proof}

 We will see in Section \ref{ssexactH=0} that when
$\eta$ is exact in the neighborhood of
$\{ H=0\}$, the flow is conservative and Corollaries~\ref{cor:rtBoundedMeasure}
and \ref{cor:rtBoundedLeafMeasure}
apply.

One of the dynamical importance of these corollaries is signified by
Poincaré's recurrence theorem: if the invariant measures in question
are also finite,
almost every point of the invariant set is recurrent.
However, with the exception of regular leafs of $\cF$
included in $\{ H=0\}$, the recurrence can also be
deduced from Proposition~\ref{prop:CD}.

\subsection{An oscillating behavior}\label{se:oscillating}
In this subsection, we give an example of a flow possessing
orbits included in $\cC_+$   that are in $\{H\neq 0\}$, positively recurrent and whose $\omega$-limit set intersects $\{ H=0\}$. Hence they have an unbounded associated winding $t\in[0, +\infty)\mapsto r_t(x)$.

\begin{proposition}\label{prop:oscillation}
There exists a Hamiltonian map $H:M\to\R$ on some conformal symplectic
manifold, the flow of which satisfying
\begin{equation*}
    \limsup_{t\to+\infty} r_t(x) > \liminf_{t\to+\infty} r_t(x) = -\infty,
\end{equation*}
for some point $x\in\{ H\neq 0\}$.
\end{proposition}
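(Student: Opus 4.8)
The plan is to reduce the two asymptotic conditions on $r_t$ to a single statement about the shape of the $\omega$-limit set, and then to realize that statement by an explicit flow with a homoclinic loop anchored on $\{H=0\}$.

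First I would exploit Lemma~\ref{lem:rt}. For a point $x$ with $H(x)\neq 0$, the identity $H\circ\varphi_t=e^{r_t}H$ gives
\begin{equation*}
    r_t(x)=\log\frac{|H(\varphi_t(x))|}{|H(x)|},\quad\forall t\in\R,
\end{equation*}
so $\liminf_{t\to+\infty}r_t(x)=-\infty$ is equivalent to $\liminf_{t\to+\infty}|H(\varphi_t(x))|=0$, i.e. to $\omega(x)\cap\{H=0\}\neq\emptyset$, while $\limsup_{t\to+\infty}r_t(x)>-\infty$ is equivalent to $\limsup_{t\to+\infty}|H(\varphi_t(x))|>0$, i.e. to $\omega(x)\not\subset\{H=0\}$. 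Hence it suffices to produce a conformal Hamiltonian flow carrying an orbit $x\in\{H\neq 0\}$ whose $\omega$-limit set meets both $\{H=0\}$ and its complement. Note that, since $H$ keeps a constant sign along each orbit, such an orbit remains in $\{H>0\}$ (say) for all time even though it accumulates on $\{H=0\}$: the infimum $0$ is never attained.

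Next I would build the flow so that $\omega(x)$ is a homoclinic loop. Concretely, I would realize on the closed surface $(\T^2,\eta,\omega)$ a Hamiltonian $H$ having a saddle critical point $p$ with $H(p)=0$ and a homoclinic loop $\Gamma$ (the unstable separatrix of $p$ returning as its stable separatrix) with $\Gamma\setminus\{p\}\subset\{H>0\}$. If $\Gamma$ is attracting from its $\{H>0\}$ side, then every orbit $x$ starting close enough to $\Gamma$ on that side spirals onto it, so that $\omega(x)=\Gamma$. Then $\omega(x)$ contains $p\in\{H=0\}$ and contains points of $\Gamma\setminus\{p\}$ where $H$ is bounded below by some $h_0>0$; the reduction above yields $\liminf r_t(x)=-\infty$ and $\limsup r_t(x)\geq\log(h_0/H(x))>-\infty$, which is exactly the claim.

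The delicate point is the stability of $\Gamma$, and here the conformal structure is essential. At $p$ one has $H(p)=0$ and $\ud H(p)=0$, so $DX(p)=\omega^{-1}\mathrm{Hess}_pH$ is traceless and the leading saddle value vanishes; the stability of the loop is therefore governed by the separatrix value $\int_{-\infty}^{+\infty}\eta(X(\gamma(s)))\,\ud s=\int_\Gamma\eta$. Thus $\Gamma$ attracts one side precisely when $\int_\Gamma\eta\neq 0$, which forces $\Gamma$ to be non-contractible and $\eta$ non-exact near $\Gamma$, in agreement with Corollary~\ref{cor:attractorLee}. I expect the main obstacle to be the \emph{persistence} of the homoclinic connection together with its placement in $\{H>0\}$: turning on a non-exact $\eta$ generically splits the separatrices of $p$, so the connection must be forced, either by building a reversible or $\Z/2$-symmetric model in which $W^u(p)$ and $W^s(p)$ are exchanged and hence must meet, or by prescribing the desired phase portrait first and then checking it is conformally Hamiltonian for a suitable gauge $(\eta,\omega)$ and Hamiltonian $H$ (equivalently, by lifting a well-controlled contact Hamiltonian flow through the twisted symplectization $\Sconf_\beta(Y,\alpha)$ of Lemma~\ref{lem:HamiltonSconf}). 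Should exact control of a single loop prove awkward, one can instead arrange a transverse homoclinic orbit to $p$ inside $\{H>0\}$, invoke the Smale--Birkhoff theorem to obtain a horseshoe $\Lambda\ni p$ with $\Lambda\setminus\{p\}\subset\{H>0\}$, and take $x$ a topologically transitive point of $\Lambda$, for which $\omega(x)=\Lambda$ again meets both $\{H=0\}$ and $\{H\neq0\}$.
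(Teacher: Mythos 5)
Your opening reduction is correct and is consistent with how the paper frames the problem: on a closed manifold, for $x$ with $H(x)\neq 0$, Lemma~\ref{lem:rt} gives $r_t(x)=\log\left(|H(\varphi_t(x))|/|H(x)|\right)$, so the two asymptotic conditions are exactly ``$\omega(x)$ meets $\{H=0\}$'' and ``$\omega(x)\not\subset\{H=0\}$''. The fatal gap is that the configuration you then try to realize cannot exist. Suppose $p$ is a hyperbolic saddle of $X$ with $H(p)=0$ (your own computation $DX(p)=\omega^{-1}\mathrm{Hess}_pH$ gives eigenvalues $\pm\lambda$, hence hyperbolicity). If $x\in W^s(p)$, then $\varphi_t(x)\to p$ at an exponential rate, and since $|X(y)|\leq L\, d(y,p)$ near $p$, the function $s\mapsto \eta(X(\varphi_s(x)))$ is absolutely integrable on $[0,+\infty)$; hence $r_t(x)$ converges to a finite limit $r_\infty$, and Lemma~\ref{lem:rt} forces
\begin{equation*}
    0=H(p)=\lim_{t\to+\infty}H(\varphi_t(x))=e^{r_\infty}H(x),
\end{equation*}
so $H(x)=0$. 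The same argument applies to $W^u(p)$. Consequently every orbit homoclinic to $p$ --- a separatrix loop as well as a transverse homoclinic intersection --- lies entirely inside $\{H=0\}$. Your loop $\Gamma$ with $\Gamma\setminus\{p\}\subset\{H>0\}$, and likewise the fallback horseshoe with $\Lambda\setminus\{p\}\subset\{H>0\}$, are therefore impossible. (This also undercuts your stability analysis: the ``separatrix value'' $\int_{-\infty}^{+\infty}\eta(X(\gamma(s)))\,\ud s$ converges absolutely precisely because of hyperbolicity, and its convergence is what forces $H\equiv 0$ on the separatrix in the first place.)

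A useful sanity check: if your one-side-attracting loop existed, its basin on that side would be an open, hence positive-volume, set of wandering (so non-recurrent) points, each of which would lie in $\cC_+$ (along such an orbit $r_t$ oscillates between $-\infty$ and values near $\log\left(\max_\Gamma H/H(x)\right)$, so $\liminf_t|r_t|<+\infty$). This contradicts Proposition~\ref{prop:CD}, by which $\cC_+$ coincides almost everywhere with the positively recurrent set; it shows that oscillating orbits as in Proposition~\ref{prop:oscillation} can only form a set of measure zero, so no open-basin mechanism can ever produce them. The paper's proof uses a genuinely different mechanism, driven by recurrence rather than attraction: it embeds $L=T^1\Sigma\times S^1$ ($\Sigma$ a closed hyperbolic surface) as a Lagrangian submanifold of a closed conformal symplectic manifold, takes $H(q,p)=p(X(q))$ on a Weinstein neighborhood $W\subset T^*_\alpha L$, and uses the Shadowing Lemma for the Anosov geodesic flow to build a geodesic along which $\int\beta$ stays bounded above while its $\liminf$ is $-\infty$; the conformal orbit is then chosen on the flow-invariant section $P$ dual to the geodesic vector field, which keeps it inside $W$ and inside $\{H\neq 0\}$, the repeated excursions away from $\{H=0\}$ coming from the recurrence of the Anosov flow. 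If you want to salvage your strategy, you would have to anchor the oscillation on a non-hyperbolic invariant set (where convergence can be subexponential and $r_t$ can diverge), which is a different construction from the one you propose.
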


\label{shadowing}
In order to prove this proposition,
let us briefly recall the statement of the Shadowing Lemma for flows
(see \emph{e.g.} \cite[Theorem~18.1.6]{KatokHasselblatt}).
Let $(\varphi_t)$ be a smooth flow on a Riemannian manifold $M$,
the infinitesimal generator of which is $X_t$.
A differentiable curve $c:I\to\R$, $I\subset\R$ interval,
is called an $\e$-orbit if $\|\dot{c}(t)-X_t(c(t))\|\leq\e$ for all
$t\in I$.
A differentiable curve $c:I\to\R$ is said to be $\delta$-shadowed
by the orbit $(\varphi_t(x))_{t\in J}$ if there exists $s:J\to I$
with $|s'-1|<\delta$ such that $d(c(s(t)),\varphi_t(x))<\delta$
for all $t\in J$ ($d$ denoting the Riemannian distance).
The Shadowing Lemma states that, given a hyperbolic set $\Lambda$ of
$(\varphi_t)$, there is a neighborhood $U\supset\Lambda$,
so that for every $\delta>0$ there is an $\e>0$ such that every
$\e$-orbit included in $U$ is $\delta$-shadowed by an orbit
of $(\varphi_t)$.
\begin{proof}
Let $\Sigma$ be a closed hyperbolic surface,
let us denote $\pi:T^1\Sigma\to\Sigma$ the associated unit
tangent bundle and
let $\beta$ be a non-exact closed $1$-form of $\Sigma$.
Let us denote $(G_t)$ the geodesic flow on $T^1\Sigma$
and $X$ the associated vector field.
Let $(M,\eta,\omega)$ be a conformal symplectic closed manifold
associated with $(T^1\Sigma,\pi^*\beta)$ by
Lemma~\ref{lem:lagrangianEmb} in Appendix~\ref{se:isotropic}: that is
one may assume that $L:=T^1\Sigma\times S^1$  is
a Lagrangian submanifold of $M$ and that
the restriction of $\eta$ to this submanifold
is $\alpha:=\pi^*\beta - \ud\theta$ (we identify $\pi^*\beta$
with its pull-back by the projection by a slight
abuse of notation).
Let $W$ be a Weinstein neighborhood of $L$:
identifying the $0$-section of $T^*_\alpha L$ with
$L$, one can see $W$ as a neighborhood of the
$0$-section of $T^*_\alpha L$ (see
\cite[Theorem~2.11]{ChantraineMurphy2019} or
Section~\ref{se:weinstein}).
Let us identify the vector field $X$ of $T^1\Sigma$
with the vector field $X\oplus 0$ of $L$ and
let $H:M\to\R$ be a Hamiltonian function satisfying
$H(q,p)=p(X(q))$ on $W\subset T^*_\alpha L$
(shrinking $W$ if necessary).
Let us prove that $H$ satisfies the statement of
the proposition.

Let us first find an orbit $(\gamma,\dot{\gamma}):
\R_+\to T^1\Sigma$ of the geodesic flow $(G_t)$ such that
\begin{equation}\label{eq:oscillation}
\begin{cases}
\exists K>0,\forall t>0,\quad
\int_{\gamma|[0,t]} \beta \leq K,\\
\limsup_{t\to+\infty} \int_{\gamma|[0,t]} \beta
> \liminf_{t\to+\infty} \int_{\gamma|[0,t]} \beta
= - \infty.
\end{cases}
\end{equation}
Such an orbit can be found applying the Shadowing Lemma
to $(G_t)$.
Indeed, let us fix $\delta\in (0,1)$ and take
an $\e>0$ associated by the Shadowing Lemma.
Let $a:\R/T\Z\to\Sigma$ be a closed geodesic
of unit speed such that $\int_a\beta > 0$
(up to reparametrization,
such an $a$ can be obtained as a minimizer of the energy
functional among loops homotopic to a loop $b$
satisfying $\int_b\beta >0$).
By topological transitivity of $(G_t)$,
there exists an $\e/2$-orbit $(c,\dot{c}):[0,T']\to T^1\Sigma$
such that $\dot{c}(0)=\dot{a}(0)$ and
$\dot{c}(T') = -\dot{a}(0)$.
By successively concatenated $c$ or $c^{-1}$
with higher and higher iterations of $a$ and $a^{-1}$,
one can thus build an $\e$-orbit $(\tilde{\gamma},
\dot{\tilde{\gamma}}):\R_+\to T^1\Sigma$
satisfying conditions (\ref{eq:oscillation})
where $\gamma$ is replaced with $\tilde{\gamma}$.
The Shadowing Lemma applied to this $\e$-orbit
gives us the desired $\gamma$.

According to Section~\ref{se:cotangent},
on $W\subset T^*_\alpha L$, the Hamiltonian flow
$(\varphi_t)$ of $H$ takes the form
\begin{equation*}
    \varphi_t(q,p;z) =
    (G_t(q),e^{r_t(q)} p\circ (\ud G_t(q))^{-1} ;z),
\end{equation*}
where $(q,p)\in T^*(T^1\Sigma)$, $z\in T^* S^1$ and
\begin{equation*}
    r_t(q) := \int_0^t \pi^*\beta(\partial_s G_s(q))
    \ud s,
\end{equation*}
as long as $\varphi_s(q,p;z)$ stays inside $W$
for $s\in [0,t]$.
As $(G_t)$ is Anosov, one has the bundle decomposition
$T(T^1\Sigma) = E^s\oplus \R X\oplus E^u$ which
is preserved by $(G_t)$ with
$\ud G_t\cdot X = X\circ G_t$.
  Let $q\mapsto P_q$ be the section of
$T^*(T^1\Sigma)$ vanishing on $E^s\oplus E^u$ and such that $P(X)\equiv 1$;
it satisfies $P_q\circ (\ud G_t(q))^{-1} = P_{G_t(q)}$ for all $q$.
For fixed $z\in T^* S^1$ and $\lambda >0$, let us
consider the $\R_+$-orbit generated by
$(\dot{\gamma}(0),\lambda P_{\dot{\gamma}(0)};z)$
(where $\gamma$ satisfies (\ref{eq:oscillation})).
By the first condition of (\ref{eq:oscillation}), $r_t(\dot{\gamma}(0))$ is
bounded from above so this orbit keeps inside $W$
for a sufficiently small $\lambda$.
The second condition of (\ref{eq:oscillation}) implies
the statement for $x=(\dot{\gamma}(0),\lambda P_{\dot{\gamma}(0)};z)$
(the orbit is in $\{ H\neq 0\}$
since $P(X)\equiv 1$).
\end{proof}

\section{Global conservative behaviors}\label{se:etaExact}

As we have seen in Corollary~\ref{cor:attractorLee},
a necessary condition for attractors to appear is
the non-exactness of the Lee form in the neighborhood
of $\{ H=0\}$.
Here, we study the opposite case:
Hamiltonian flow $(\varphi_t)$ of $H$ on a closed conformal symplectic
manifold $(M^{2n},\eta,\omega)$ in the case where $\eta$ is exact
in the neighborhood of the invariant set $\{ H = 0\}$.
That is, we assume that there exists an open set $U$
containing $\{ H=0\}$ such that
$[\eta|_U]=0$ in $H^1(U;R)$.
This hypotheses is thus gauge invariant.

\subsection{When $H$ does not vanish}\label{ssHnonnul}
Let us first assume that $H$ does not vanish and denote $X_H^\eta$
its associated vector field for the Lee form $\eta$.
Possibly reversing time, we will assume that $H$ is positive.
Since $X_H^\eta = X_{e^fH}^{\eta+\ud f}$,
by setting $f=-\log\circ H$, we see that $X_H^\eta$ is
the Lee vector field of $\eta'=\eta - \ud(\ln\circ H)$.
Therefore, Hamiltonian flows of non-vanishing $H$ are Lee flows.

We now assume that $H\equiv1$ for the choice of gauge $(\eta,\omega)$,
so that the vector field is $L^\eta$.
Since $\eta(L^\eta)=0$, $r_t\equiv 0$   and $\cC_+=\cC_-=M$, i.e. the flow is positively and negatively conservative with the terminology given in the introduction. Thus almost every point is positively and negatively recurrent.

Lemma~\ref{lem:rt} implies that $\omega$ is preserved by the flow.
Let us point out that this flow is not conjugated to
a symplectic flow in general since one can
have $H^2(M;\R)=0$ (\emph{e.g.} the conformal symplectization
of the contact sphere $(\ES^{2n-1},\frac{1}{2}(x\ud y-y\ud x))$).
The volume form $\omega^n$ is preserved so almost every point
is recurrent according to Poincaré's recurrence theorem.
More precisely, almost every point of a proper
embedded leaf of $\cF$ is recurrent 
according to Proposition~\ref{prop:CD} and
Corollary~\ref{cor:rtBoundedLeafMeasure}.
Let us remark that in the case where
$\eta$ is   completely resonant,    (\emph{i.e.} the subgroup $\{\int_\gamma \eta; \gamma:S^1\to M\}$ is discrete),
there exists $k\in\R^*$ and a map
$\theta : M\to \R/k\Z$ such that $\eta = \ud\theta$
and the invariant foliation $(\theta^{-1}(s))_{s\in\R/k\Z}$ has compact leafs.

\subsection{When
$\eta$ is exact in the neighborhood of
$\{ H=0\}$}\label{ssexactH=0}
Let us move on to the general case:
there exists an open neighborhood $U$ of
$\{ H=0\}$ on which $\eta|_U=\ud\theta$
for some $\theta:U\to\R$.
\begin{proposition}\label{prop:rtborne}
Under the hypotheses of this section,
the map $(t,x)\mapsto r_t(x)$ is bounded
on $\R\times M$.
\end{proposition}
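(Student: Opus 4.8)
The plan is to reduce boundedness of $r_t$ to boundedness of $\ln|H|$ along orbits, and then to control the latter near $\{H=0\}$ by means of an orbit-invariant function built from $\theta$. By Lemma~\ref{lem:rt} one has $H\circ\varphi_t = e^{r_t}H$, so on $\{H\neq 0\}$ the crucial identity $r_t(x) = \ln|H(\varphi_t(x))| - \ln|H(x)|$ holds. Hence it suffices to show that $\ln|H(\varphi_s(x))|$ stays in a fixed bounded interval, uniformly in $x$ and $s$. Fix the data: choose an open set $V$ with $\{H=0\}\subset V\subset\overline V\subset U$, and set $C:=\max_M|H|$, $c:=\min_{M\setminus V}|H|$ (which is positive since $\{H=0\}\subset V$ and $M\setminus V$ is compact) and $B:=\max_{\overline V}|\theta|$ (finite since $\overline V\subset U$ is compact). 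Outside $V$ we already have $\ln c\le \ln|H|\le \ln C$ directly, so the content lies entirely in the behaviour inside $V$.

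The key device is the function $\psi := e^{-\theta}H$ on $U$. Using $\eta|_U=\ud\theta$ and arguing as in the proof of Lemma~\ref{lem:rt}, along any orbit segment contained in $U$ one has $r_s - r_{s_1} = \theta\circ\varphi_s - \theta\circ\varphi_{s_1}$, whence $\psi$ is \emph{constant} along such a segment. I then decompose a given orbit arc $\{\varphi_s(x):0\le s\le t\}$ into its sojourns inside $V$ and its sojourns inside $M\setminus V$. During a sojourn in $M\setminus V$ one has $\ln|H\circ\varphi_s|\in[\ln c,\ln C]$. During a sojourn in $V$ that is entered through $\partial V$, the entry point lies in $\partial V\subset M\setminus V$, where $|H|\ge c$ and $|\theta|\le B$; hence the constant value $\psi_0$ of $\psi$ on that sojourn satisfies $\ln|\psi_0|\in[\ln c-B,\ln C+B]$, and therefore $\ln|H\circ\varphi_s| = \ln|\psi_0| + \theta\circ\varphi_s\in[\ln c-2B,\ln C+2B]$ throughout the sojourn.

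The only configuration not reached through $\partial V$ is an orbit remaining in $V$ on all of $[0,t]$; there $\psi$ is constant on the whole interval and $r_t(x)=\theta(\varphi_t(x))-\theta(x)\in[-2B,2B]$ directly. In every other case the estimate above pins both $\ln|H(x)|$ and $\ln|H(\varphi_t(x))|$ inside $[\ln c-2B,\ln C+2B]$, where for the initial, respectively terminal, point one uses the exit, respectively entry, point on $\partial V$ of the sojourn containing $0$ or $t$. This yields the uniform bound $|r_t(x)|\le \ln(C/c)+4B$. The case $H(x)=0$ is handled separately: $\{H=0\}$ is invariant and contained in $U$, so $r_t(x)=\theta(\varphi_t(x))-\theta(x)$ is bounded by $2B$; the case $t<0$ is symmetric.

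I expect the main obstacle to be the bookkeeping of the excursions between $V$ and $M\setminus V$: a naive estimate controls only the \emph{increment} of $r_s$ across each excursion, and since there may be infinitely many excursions these increments could a priori accumulate to $\pm\infty$. The invariance of $\psi$ is exactly what circumvents this. Rather than summing increments, it bounds the absolute value of $\ln|H\circ\varphi_s|$ at every time, by pinning the conserved quantity $\psi$ at the instant the orbit crosses $\partial V$, where $|H|$ is bounded below by $c$ and $\theta$ is bounded by $B$.
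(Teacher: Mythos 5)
Your proof is correct, and it takes a genuinely different route in its bookkeeping, even though it rests on the same two consequences of Lemma~\ref{lem:rt} as the paper's proof (namely $e^{r_t}=H\circ\varphi_t/H$ wherever $H\neq 0$, and the fact that increments of $r$ equal increments of $\theta$ along orbit segments contained in $U$). The paper works with two \emph{nested} sublevel sets $V=H^{-1}([-\varepsilon,\varepsilon])\subset V_0=H^{-1}([-\varepsilon_0,\varepsilon_0])\subset U$ and runs a four-case analysis on the positions of $x$ and $\varphi_t(x)$; in its hardest case (both endpoints in $V$ but the orbit escaping $V_0$) it picks times $t_1<t_2$ at which $H$ takes the same value $\varepsilon$, deduces $r_{t_1}=r_{t_2}$, and thereby discards the whole excursion. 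You instead package the two facts into the local first integral $\psi=e^{-\theta}H$ on $U$ --- incidentally the very function the paper later uses as a distinguished map in its holonomy computation --- and pin its value at the $\partial V$-crossings, where $|H|\geq c$ and $|\theta|\leq B$ hold simultaneously; this bounds $\ln|H|$ at the two endpoints of the orbit segment, and hence $|r_t|$ by subtraction. Your single neighbourhood suffices because $\partial V$ plays exactly the role of the paper's buffer region $V_0\setminus V$, and the conservation of $\psi$ disposes of the accumulation-over-many-excursions issue (which you rightly flag) in one stroke, where the paper's matched-crossing trick handles it case by case. Both arguments produce explicit uniform bounds: $2A+\log(B/b)$ in the paper, $\ln(C/c)+4B$ in yours.
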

\begin{proof}
Let $\varepsilon_0>0$ be such that
the neighborhood $V_0 := H^{-1}([-\varepsilon_0,
\varepsilon_0])$ is included in $U$ and
let $V := H^{-1}([-\varepsilon,\varepsilon])$ for
some $\varepsilon\in (0,\varepsilon_0)$.
Let $A := \max_{V_0} \theta - \min_{V_0} \theta$,
$b := \inf_{M\setminus V} |H|$ and
$B := \sup_{M\setminus V} |H|$.
We will show that
\begin{equation*}
    |r_t(x)| \leq 2A + \log(B/b),\quad
    \forall (t,x)\in \R\times M.
\end{equation*}
Let $(t,x)\in\R\times M$,
if $\varphi_s(x)\in V_0$ for all $s\in [0,t]$
($[t,0]$ if $t<0$),
then $|r_t(x)|\leq A$.
If $x\in M\setminus V$ and $\varphi_t(x)\in M\setminus V$,
then $b/B\leq |H(\varphi_t(x))/H(x)|\leq B/b$
so $|r_t(x)|\leq \log(B/b)$ according to
Lemma~\ref{lem:rt}.
If $x\in V$ and $\varphi_t(x)\not\in V$,
we assume $t>0$,
let $t_0 \in [0,t]$ such that $\varphi_s(x)\in V_0$
for all $s\in [0,t_0]$ and $\varphi_{t_0}(x)\in M\setminus V$,
then $|r_{t_0}(x)|\leq A$ whereas
$|r_{t-t_0}(\varphi_{t_0}(x))|\leq \log(B/b)$
by the above case, implying $|r_t(x)| \leq A + \log(B/b)$.
The same is symmetrically true for $t<0$ and with $x$ and $\varphi_t(x)$
intertwined.
The last case is when $x\in V$ and $\varphi_t(x)\in V$,
we assume $t>0$ (the other case is symmetrical),
and $\varphi_{s_0}(x)\not\in V_0$ for some $s_0\in [0,t]$.
One can find $t_1<s_0<t_2$ such that
$H(\varphi_{t_1}(x))=H(\varphi_{t_2}(x))=\varepsilon$ and
$\varphi_s(x)\in V$ for $s\in [0,t_1]\cup[t_2,t]$.
By Lemma~\ref{lem:rt}, $r_{t_2}(x)=r_{t_1}(x)$ and
the conclusion follows from the first case treated. 
\end{proof}

Therefore, according to the conservative-dissipative
decomposition of Section~\ref{se:CD},
$M=\cC_-=\cC_+$ and almost every points of $M$ or an embedded
leaf of $\cF$ in $\{ H \neq 0\}$ is   positively and negatively recurrent.
Moreover,
according to Corollaries~\ref{cor:rtBoundedMeasure}
and \ref{cor:rtBoundedLeafMeasure},
$M$ and every embedded leaf of $\cF$   in $\{ H\neq 0\}$ admits an invariant Borel measure of positive density.
In particular, almost every point of a closed regular leaf
of $\cF$ in $\{ H= 0\}$ is also   positively and negatively recurrent.

\subsection{A topologically transitive Lee flow}\label{sstransLee}

Our goal is to provide examples of topologically transitive
Lee flow in every dimension.

  We have seen in Section \ref{ssaxaconsdim2} that  in dimension 2, there are very simple examples of minimal Lee flow.   We recall it. 
Let $\T^2=\R^2/\Z^2$ denote the $2$-torus with canonical coordinates
$x,y\in \R/\Z$.
Let us fix $a,b\in\R$ and
endow $\T^2$ with the conformal symplectic structure
$(\eta,\omega) = (a\ud x + b\ud y,\ud x\wedge\ud y)$,
the Lee flow of which is $\varphi_t(x,y) = (x + bt,y-at)$.
This flow is minimal if and only if $a$ and $b$ are rationally independent.

One way to extend this example is to remark that in the case $a=-1$,
it corresponds to the $\beta$-twisted conformal symplectization
of the contact manifold $(S^1,\ud y)$ with $\beta = b\ud y$.
\begin{proposition}\label{prop:transitiveLee}
    Let $(Y,\alpha)$ be a closed connected contact manifold with a fixed
    contact form, the Reeb flow of which is Anosov and possess a
    periodic orbit $\gamma$ such that $\int_\gamma \beta$ is irrational
    for some closed $1$-form $\beta$.
    Then,
    the standard Lee flow of $\Sconf_\beta(Y,\alpha)$ is topologically
    transitive.
\end{proposition}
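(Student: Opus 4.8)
By Lemma~\ref{lem:HamiltonSconf}, the standard Lee vector field on
$\Sconf_\beta(Y,\alpha)$ is $L = R\oplus\beta(R)\partial_\theta$,
so its flow $(\Phi_t)$ covers the Reeb flow $(G_t)$ on $Y$ via the
projection $\pi:Y\times S^1\to Y$ and acts on the $S^1$-fibre by
translation by $\rho_t(x) = \int_0^t \beta(\partial_s G_s(x))\ud s$.
The plan is to deduce topological transitivity of $(\Phi_t)$ on
$Y\times S^1$ from two ingredients: the transitivity (indeed the
Anosov mixing-type density) of the Reeb flow in the base, and the
irrationality of $\int_\gamma\beta$, which forces the fibre-translation
cocycle $\rho_t$ to equidistribute around $S^1$ along suitable orbits.
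The standard reduction is to show that the flow is transitive by
exhibiting a dense orbit, or equivalently by checking that for any two
nonempty open sets $\cU,\cV\subset Y\times S^1$ there exists $t$ with
$\Phi_t(\cU)\cap\cV\neq\emptyset$.

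\textbf{Reducing to a skew-product density statement.}
First I would observe that the flow $(\Phi_t)$ is a skew product over
$(G_t)$ with circle fibre: $\Phi_t(x,\theta)=(G_t(x),\theta+\rho_t(x))$.
Fix open boxes $\cU = A\times I$ and $\cV = B\times J$ with
$A,B\subset Y$ open and $I,J\subset S^1$ open arcs. Since $(G_t)$ is
Anosov, it is topologically transitive, so there are arbitrarily large
times $t$ with $G_t(A)\cap B\neq\emptyset$; the problem is to arrange
simultaneously that the fibre translation $\rho_t$ lands some point of
$I$ into $J$. The key mechanism is the periodic orbit $\gamma$ of
(least) period $T$ with $c:=\int_\gamma\beta = \rho_T(\gamma(0))$
irrational mod $1$: shadowing or closing near $\gamma$ produces orbit
segments whose accumulated fibre shift is approximately $kc \pmod 1$ for
arbitrarily large $k$, and $\{kc \bmod 1\}$ is dense in $S^1$ by
irrationality. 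I would make this precise by choosing a point whose
forward orbit first passes close to $A$, then spends time winding near
$\gamma$ to adjust the fibre coordinate to the desired value in $J$,
then travels from a neighbourhood of $\gamma$ to $B$; each leg exists by
transitivity of $(G_t)$, and the continuous dependence of $\rho_t$ on
the orbit lets one absorb the small fibre contributions of the
connecting legs.

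\textbf{The main obstacle and how to handle it.}
The delicate point is controlling the fibre cocycle $\rho_t$ along the
\emph{connecting} orbit segments while tuning it along the near-periodic
segment: $\rho_t$ is not a function of the endpoint but of the whole
orbit, so a priori the connecting pieces could spoil the equidistribution
gained near $\gamma$. The clean way around this is to fix once and for
all two transitivity-connecting orbit segments (one from $A$ to a
neighbourhood of $\gamma(0)$, one from $\gamma(0)$ to $B$), which
contribute fixed fibre shifts $s_1,s_2$; then inserting $k$ loops near
$\gamma$ contributes $\approx s_1 + kc + s_2 \pmod 1$, and by density of
$\{kc\}$ in $S^1$ one chooses $k$ so that this lies in $J - I$. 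Making
the shadowing/closing errors uniformly small (using the Anosov structure
and the local product structure near $\gamma$) keeps the actual value
within any prescribed tolerance of $s_1+kc+s_2$, which suffices since
$I,J$ are open. I expect the bookkeeping of these error terms, rather
than any conceptual difficulty, to be the technical heart of the
argument; once it is in place, transitivity follows immediately.
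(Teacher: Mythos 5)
Your proposal follows essentially the same route as the paper's proof: reduce to the skew product $\Phi_t(x,\theta)=(\varphi_t(x),\theta+\rho_t(x))$ over the Reeb flow, build pseudo-orbits by concatenating two fixed connecting segments (from $U_1$ to the periodic orbit and from it to $U_2$) with $k$ copies of the periodic loop, use irrationality of $\int_\gamma\beta$ to make the resulting fibre shifts $s_1+kc+s_2$ dense mod $1$, and invoke the Shadowing Lemma to replace the pseudo-orbit by a genuine orbit whose $\beta$-integral differs by less than a prescribed tolerance. This is exactly the paper's argument, including the same error-budget mechanism (the paper takes each shadowing error in $\int\beta$ to be at most $\ell/3$ where $\ell$ is the length of the target arcs).

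One caveat: your assertion ``since $(G_t)$ is Anosov, it is topologically transitive'' is false for general Anosov flows (the Franks--Williams examples are non-transitive), so this step needs an extra input. The paper supplies it by noting that \emph{contact} Anosov flows on connected manifolds are topologically mixing (citing \cite[Theorem~18.3.6]{KatokHasselblatt}); equivalently, one can observe that the Reeb flow preserves the volume $\alpha\wedge(\ud\alpha)^n$, and volume-preserving Anosov flows are transitive. With that justification inserted, your argument is complete in outline and matches the paper's.
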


Such a $(Y,\alpha)$ can be found in every dimension.
Indeed, let $N$ be a closed Riemannian manifold with negative sectional curvature
and a non-trivial real homology group of degree 1: $H^1(N;\R)\neq 0$.
Let thus $\beta'$ be a non-exact closed 1-form such that
$\int_c \beta'$ is irrational for some loop $c$.
Let $\pi:T^1 N\to N$ be the unit sphere bundle of $N$ endowed
with its usual contact structure (the Reeb flow of which is
the geodesic flow), the $\beta$-twisted conformal
symplectization of $Y:=T^1 N$ with $\beta := \pi^*\beta'$ follows
the hypothesis of the statement.
Indeed, by taking a minimum of the energy functional among loops
homotopic to $c$, one gets a closed geodesic homotopic to $c$,
so a periodic orbit $\gamma$ of the geodesic flow such that
$\int_\gamma \beta = \int_c \beta'$.

However, Lee flows induced by these choices of $(Y,\alpha)$
have a lot of periodic orbits in dimension $2n\geq 4$ so are not minimal.
\begin{proof}[Proof of Proposition~\ref{prop:transitiveLee}]
    Let $(\varphi_t)$ be the Reeb flow of $Y$.
    According to Lemma~\ref{lem:HamiltonSconf},
    the Lee flow $(\Phi_t)$ of $\Sconf_\beta(Y,\alpha) = Y\times S^1$
    takes the following form: for all $(x,\theta)\in Y\times S^1$,
    $t\in\R$,
    \begin{equation*}
        \Phi_t(x,\theta) = (\varphi_t(x),\theta+\rho_t(x)),\text{ where }
        \rho_t(x) := \int_0^t \beta(\partial_s\varphi_s(x))\ud s.
    \end{equation*}
    In order to show topological transitivity, it is enough to
    prove that for every pair of product     non-empty open sets
    $U_i\times V_i\subset Y\times S^1$, $i\in\{ 1,2\}$,
    there is some $(x,\theta)\in U_1\times V_1$ and
    some $t\in\R$ such that $\Phi_t(x,\theta)\in U_2\times V_2$
    (see \emph{e.g.} \cite[Lemma~1.4.2]{KatokHasselblatt}).
     We can assume that the $V_i$ are arcs of length $\ell>0$.\\
    By hypothesis, there exists
    a point $y\in Y$ such that $\varphi_{t_2}(y)=y$
    for some $t_2>0$ and $\rho_{t_2}(y)$ is irrational.\\
     We choose $x_i\in U_i$ for $i=1, 2$.
    Let $\delta>0$ be so small that if a curve $c:[a,b]\to Y$
    with $c(a)=x_1$ and $c(b) =  x_2$ is $\delta$-shadowed
    by an orbit $\nu:[a',b']\to Y$, then $\nu(a')\in U_1$, $\nu(b')\in U_2$
    and $|\int_c\beta - \int_\nu \beta| < \ell/3$.

    By assumption, $(\varphi_t)$ is a topologically transitive
    Anosov flow (contact Anosov flows on connected manifolds are
    topologically mixing \cite[Theorem~18.3.6]{KatokHasselblatt}).
    
    Let $\e>0$ be associated with $\delta$
    by the Shadowing Lemma.
    By transitivity, there exist $\e/2$-orbits $c_1:[0,t_1]\to Y$,
    $c_3:[0,t_3]\to Y$,
      with $c_1(0) = x_1$, $c_1(t_1)=y$ and $c_3(0) = y$, $c_3(t_3)=x_2$.
    Then (up to a small deformation at the connecting points)
    the concatenated paths $\nu_k:=c_1\cdot c_2^k \cdot c_3$, $k\in\N$,
    are $\e$-orbits for $(\varphi_t)$.
    Let $R_k := \int_{\nu_k}\beta \bmod 1 \in S^1$.
    Since $\int_{c_2}\beta$ is irrational, $(R_k)$ is dense in $S^1$.
    Let us fix $k\in\N$ such that $\theta + [R_k - \ell/3,R_k + \ell/3]
    \in V_2$ for some $\theta\in V_1$ (the length of the arcs $V_i$'s being $>\ell$).
    Applying the Shadowing Lemma to $\nu_k$, we find an orbit
    $\nu:[0,T]\to Y$ such that $\nu(0)\in U_1$, $\nu(T)\in U_2$
    and $\int_\nu \beta\bmod 1$ is $\ell/3$-close to $R_k$
    so that $\theta+ \int_\nu \beta\in V_2$.
\end{proof}
    
\subsection{A Lee flow with no periodic orbit}\label{ssLeenoperiodic}
Relaxing the transitivity hypothesis, one can easily produce
a Lee flow without periodic orbit.

\begin{proposition}\label{prop:aperiodicLee}
    Let $\T^n$ be the flat $n$-torus with canonical coordinates $x_i\in\R/\Z$,
    and let $\beta := a_1\ud x_1 +\cdots + a_n \ud x_n$ for some fixed
    $a_i\in\R$.
    The standard Lee flow of $\Sconf_\beta(T^1\T^n)$ does not
    have any periodic orbit if and only if the family
    $(1,a_1,\ldots,a_n)$ is
    rationally independent.
\end{proposition}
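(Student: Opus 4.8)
The plan is to use the explicit description of the Lee flow on $\Sconf_\beta(T^1\T^n)$ coming from Lemma~\ref{lem:HamiltonSconf} and to reduce the existence of a periodic orbit to a Diophantine condition on $(1,a_1,\ldots,a_n)$. By that lemma, writing $(\varphi_t)$ for the geodesic (Reeb) flow on $T^1\T^n$ and $\beta=\pi^*(a_1\ud x_1+\cdots+a_n\ud x_n)$, the Lee flow takes the form
\begin{equation*}
    \Phi_t(q,\theta) = (\varphi_t(q),\theta+\rho_t(q)),\quad
    \rho_t(q) := \int_0^t \beta(\partial_s\varphi_s(q))\,\ud s,
\end{equation*}
on $Y\times S^1$ with $Y=T^1\T^n$. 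So $(q,\theta)$ is periodic of period $\tau$ if and only if $\varphi_\tau(q)=q$ \emph{and} $\rho_\tau(q)\in\Z$. The first step is therefore to understand the geodesic flow on the \emph{flat} torus: its unit-speed geodesics are the straight lines $s\mapsto x+sv$ with $v\in S^{n-1}$ a constant direction, so $\varphi_s(q)=(x+sv,v)$. A geodesic orbit is periodic (closes up in $T^1\T^n$) if and only if the direction $v$ is rational, i.e. $\tau v\in\Z^n$ for some $\tau>0$; equivalently $v$ is a nonzero integer vector normalized to unit length.

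The second step is to compute $\rho_\tau(q)$ along such a closed geodesic. Since $\beta$ is the pull-back of a \emph{constant}-coefficient closed form on $\T^n$, its integral along a geodesic segment depends only on the displacement in the base: $\rho_\tau(q)=\int_0^\tau \sum_i a_i\,\dot{x}_i(s)\,\ud s = \sum_i a_i\,(\tau v_i)$, where $\tau v=m\in\Z^n$ is the integer homology class of the closed geodesic. Thus $\rho_\tau(q)=\sum_i a_i m_i$ with $m\in\Z^n\setminus\{0\}$. Hence a periodic orbit of $\Phi$ over this geodesic exists precisely when there is an integer $N$ with $\sum_i a_i m_i = N$, i.e. when $a_1 m_1+\cdots+a_n m_n - N = 0$ admits an integer solution $(m,N)$ with $m\neq 0$. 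Therefore $(\Phi_t)$ \emph{has} a periodic orbit if and only if there exists $(m_1,\ldots,m_n,N)\in\Z^{n+1}$ with $(m_1,\ldots,m_n)\neq 0$ satisfying $m_1 a_1+\cdots+m_n a_n - N\cdot 1 = 0$; this is exactly the negation of rational independence of the family $(1,a_1,\ldots,a_n)$.

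This yields the equivalence directly: the family $(1,a_1,\ldots,a_n)$ is rationally independent iff the only integer relation $N\cdot 1 + m_1 a_1 +\cdots + m_n a_n=0$ is the trivial one, iff no closed geodesic direction produces $\rho_\tau\in\Z$, iff $(\Phi_t)$ has no periodic orbit. I would also note that every periodic orbit of $\Phi$ must project to a \emph{closed} geodesic, so it suffices to examine rational directions $v$; this rules out the possibility of a periodic orbit sitting over a non-closing geodesic (there the first coordinate never returns). The only mild subtlety, and the step I expect to require the most care, is verifying that periodicity of $(q,\theta)$ genuinely forces $\varphi_\tau(q)=q$ in $T^1\T^n$ (both position and direction), so that the relevant periods $\tau$ are exactly the lengths of closed geodesics and the winding $\rho_\tau$ is computed over a genuine integer homology class; once this is pinned down, the Diophantine reformulation is immediate and the proposition follows.
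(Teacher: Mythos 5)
Your proof is correct and takes essentially the same route as the paper's: both use Lemma~\ref{lem:HamiltonSconf} to write the Lee flow as $(q,\theta)\mapsto(\varphi_t(q),\theta+\rho_t(q))$ over the flat geodesic flow $\varphi_t(x,v)=(x+tv,v)$, reduce $\tau$-periodicity to the two conditions $\tau v\in\Z^n$ and $\sum_i a_i\tau v_i\in\Z$, and then conclude. The only difference is that you spell out the final Diophantine equivalence (in particular that any nontrivial integer relation $N+\sum_i m_ia_i=0$ forces $m\neq 0$), which the paper leaves implicit.
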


This flow is clearly not minimal and, in general, there is not much
hope for the standard Lee flow of a closed twisted conformal symplectization
to be minimal in dimension $2n\geq 4$.
Indeed, the Weinstein conjecture states that every Reeb flow
of a closed contact manifold $(Y,\alpha)$ should possess a
closed orbit $\gamma$, so $\gamma\times S^1$ would be
a closed invariant set of the standard Lee flow
of the twisted conformal symplectizations
of $(Y,\alpha)$.

\begin{proof}[Proof of Proposition~\ref{prop:aperiodicLee}]
The Reeb flow of $T^1\T^n\simeq\T^n\times\ES^{n-1}$
is $\varphi_t(x,v):=(x+tv,v)$.
The associated $\rho_t(x,v) := \int_0^t
\beta(\partial_s\varphi_s(x,v))\ud s\bmod 1$
satisfies $\rho_t(x,v) = \sum_i a_i tv_i\bmod 1$.
Therefore, a point $(x,v,\theta)\in T^1\T^n\times S^1$
is a $\tau$-periodic point of the Lee flow
if and only if
$\tau v\in \Z^n$ and $\sum_i a_i \tau v_i\in\Z$.
\end{proof}
\subsection{  A conservative behavior with $\eta_{|\{ H=0\}}$ non exact}\label{ssconsnonexact}
 \begin{proposition}
    Let $\T^n$ be the flat $n$-torus with canonical coordinates $q_i\in\R/\Z$,
    and let us consider on $\Sconf(T^1\T^n)$ the Hamiltonian $H( q_1, \dots,q_n, p_1, \dots, p_n, \theta)=p_1 $. The Hamiltonian flow is
    $$\varphi_t^H( q_1, \dots,q_n, p_1, \dots, p_n, \theta)=(q_1+t,q_2, \dots,q_n, p_1, \dots, p_n, \theta).$$
This flow preserves the conformal 2-form and the  zero level $\{ H=0\}$ contains a loop $\gamma$ such that 
$$\int_\gamma\eta\neq 0.$$
\end{proposition}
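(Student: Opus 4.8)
Looking at this proposition, I need to verify three claims about the Hamiltonian $H = p_1$ on $\Sconf(T^1\T^n)$: the flow formula, that it preserves the conformal 2-form $\omega$, and that $\{H=0\}$ contains a loop with nonzero $\eta$-integral.

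Let me work through what I'd need to check.

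**The setup.** On $\Sconf(T^1\T^n) = T^1\T^n \times S^1$, with $\beta = 0$ (untwisted), the Lee form is $\eta = -d\theta$ and $\omega = -d_\eta(\pi^*\alpha)$ where $\alpha$ is the contact form on $T^1\T^n$. Here $\alpha = \sum_i p_i \, dq_i$ restricted to the unit cotangent bundle. By Lemma~\ref{lem:HamiltonSconf}, for $\widetilde{H}(x,\theta) = H(x)$ where $H = p_1$ is a contact Hamiltonian, I get the conformal Hamiltonian vector field $\widetilde{X} = X \oplus (\beta(X) - dH\cdot R)\partial_\theta = X \oplus (-dH\cdot R)\partial_\theta$ since $\beta = 0$.

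**The flow formula.** I need the contact Hamiltonian flow of $H = p_1$ on $T^1\T^n$. From equation~\eqref{eq:contactHam}, $\alpha(X) = H = p_1$ and $\iota_X d\alpha = (dH\cdot R)\alpha - dH$. The Reeb field is $R = \sum_i p_i \partial_{q_i}$ (geodesic flow), so $dH \cdot R = \partial_{q_1}(p_1)\cdot(\ldots) = 0$ since $p_1$ doesn't depend on $q$. Thus $dH\cdot R = 0$, giving $\alpha(X) = p_1$ and $\iota_X d\alpha = -dp_1$.

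The plan is to verify directly that $X = \partial_{q_1}$ solves this system. With $d\alpha = \sum_i dp_i \wedge dq_i$, one computes $\iota_{\partial_{q_1}} d\alpha = -dp_1$, matching the second equation; and $\alpha(\partial_{q_1}) = p_1$ matches the first. So $X = \partial_{q_1}$, the flow is $(q_1 + t, q_2,\ldots, p_1,\ldots)$, and since $dH\cdot R = 0$ the $\theta$-component is constant, confirming $\Phi_t$ fixes $\theta$.

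**Preservation of $\omega$ and the loop.** By Lemma~\ref{lem:rt}, $\varphi_t^*\omega = e^{r_t}\omega$ with $r_t = \int_0^t \eta(\widetilde X)\,ds$, and $\eta(\widetilde X) = dH\cdot L$ (the general fact stated after Lemma~\ref{lem:HamiltonSconf}). Since $L = R \oplus 0$ here (as $\beta=0$) and $dH\cdot R = 0$, I get $\eta(\widetilde X) = 0$, so $r_t \equiv 0$ and $\omega$ is preserved. For the final claim, $\{H=0\} = \{p_1 = 0\}$ contains, for instance, the loop $\gamma(s) = (0,\ldots,0; 0, p_2,\ldots,p_n; s)$ winding once in the $\theta = S^1$ direction (with fixed unit-length $(p_2,\ldots,p_n)$); then $\int_\gamma \eta = \int_\gamma (-d\theta) = -1 \neq 0$. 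The only subtlety is checking this $\gamma$ genuinely lies in $\{p_1=0\}\cap T^1\T^n$, which holds whenever $n \geq 2$ so that $\sum_{i\geq 2} p_i^2 = 1$ is solvable; I expect the \emph{main obstacle} is merely confirming $R$'s expression and that $dH\cdot R = 0$, after which everything is a short direct computation.
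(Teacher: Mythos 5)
Your proof is correct and follows essentially the same route as the paper: identify the contact form as the restricted Liouville form, compute the Reeb field $R=\sum_i p_i\partial_{q_i}$, observe $\ud H\cdot R=0$, and invoke Lemma~\ref{lem:HamiltonSconf} to get the conformal vector field $\partial_{q_1}$ with no $\partial_\theta$ component. You in fact supply details the paper leaves implicit — the explicit loop in the $\theta$-direction and the (correct) observation that its existence requires $n\geq 2$ — but the argument is the same.
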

 \begin{proof}
The contact form is the restriction of the Liouville 1-form $\lambda$ to $T^1\T^n$ and the Reeb   vector field $R$ at $(q, p)$ is $(p, 0)$. Hence $dH\cdot R=0$ and 
     the contact Hamiltonian flow $X_H$ satisfies  $\iota_{X_H}d\lambda=-dp_1$ and $X_H=(1, 0, \dots, 0)$.
     As $dH\cdot R=0$, we deduce from Lemma \ref{lem:HamiltonSconf} that the conformal Hamiltonian vector field is $( 1,0,  \dots, 0)$.
\end{proof}

\section{Dissipative behaviors}\label{Sdiss}

\subsection{Dissipative ergodic measures}  

Let $\nu$ be an ergodic measure and let us
denote
\begin{equation*}\label{eq:meanr}
\meanr(\nu) := \int_M \eta(X)\ud\nu.
\end{equation*}
The following proposition is the ergodic
counterpart of Corollary~\ref{cor:01law}.

\begin{proposition}\label{eq:ergodic01law}
    Given an ergodic measure $\nu$,
    $\nu(\{ H=0\})\in\{ 0,1\}$ and in the case
    where $\meanr(\nu)\neq 0$, the support of $\nu$
    is included in $\{ H=0\}$.
\end{proposition}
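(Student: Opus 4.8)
The plan is to combine the invariance of $\{H=0\}$ with Birkhoff's ergodic theorem applied to the observable $\eta(X)$. For the first assertion, recall from Lemma~\ref{lem:rt} that $H\circ\varphi_t = e^{r_t}H$, so the level set $\{H=0\}$ is invariant under $(\varphi_t)$. Since $\nu$ is ergodic, every invariant Borel set has measure $0$ or $1$; applying this to $\{H=0\}$ gives $\nu(\{H=0\})\in\{0,1\}$.

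For the second assertion, I would first observe that, by definition, $r_t(x)=\int_0^t\eta(X\circ\varphi_s(x))\,\ud s$ is exactly the Birkhoff integral of the continuous (hence $\nu$-integrable) function $\eta(X)$ along the flow. Birkhoff's ergodic theorem for the flow therefore yields, for $\nu$-almost every $x$,
\[
\frac{r_t(x)}{t}\xrightarrow[t\to+\infty]{}\meanr(\nu),
\]
and, applying the same theorem to the time-reversed flow (for which $\nu$ is still ergodic), one gets $r_t(x)/t\to\meanr(\nu)$ also as $t\to-\infty$. Assuming $\meanr(\nu)\neq 0$, in at least one of the two time directions we obtain $r_t(x)\to+\infty$ (forward if $\meanr(\nu)>0$, backward if $\meanr(\nu)<0$). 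Along a sequence $t_n$ realizing this, $e^{r_{t_n}(x)}\to+\infty$.

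Finally I would exploit $H\circ\varphi_{t_n}(x)=e^{r_{t_n}(x)}H(x)$ together with the compactness of $M$: since $H$ is continuous on a compact manifold it is bounded, so the left-hand side stays bounded while $e^{r_{t_n}(x)}\to+\infty$, which forces $H(x)=0$. Thus $H$ vanishes $\nu$-almost everywhere, that is $\nu(\{H\neq 0\})=0$; as $\{H\neq 0\}$ is open, it is disjoint from $\supp\nu$, so $\supp\nu\subset\{H=0\}$.

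The only step requiring genuine care is the bookkeeping of the sign of $\meanr(\nu)$, which dictates the relevant time direction: a positive mean drives $r_t$ to $+\infty$ in forward time, while a negative mean drives it to $+\infty$ in backward time. Both cases funnel into the same estimate $e^{r_{t_n}(x)}\to+\infty$, and hence into the identical conclusion, so no additional argument is needed to distinguish them.
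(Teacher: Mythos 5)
Your proof is correct and follows essentially the same route as the paper: Birkhoff's ergodic theorem applied to $\eta(X)$ gives $r_t(x)\sim t\meanr(\nu)$ for generic points, and the relation $H\circ\varphi_t=e^{r_t}H$ together with the boundedness of $H$ on the compact manifold forces $H$ to vanish. The only (inessential) difference is that the paper argues by contradiction from a single generic point in $\{H\neq 0\}$, whereas you show $H=0$ at $\nu$-almost every point and then pass to the support.
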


\begin{proof}
    The first statement is obvious since $\{ H=0\}$
    is an invariant set.
    If $\meanr(\nu)\neq 0$ and
    $\supp\nu\not\subset \{H=0\}$,
    there exists $x\in\{ H\neq 0\}$
    such that $r_t(x)\sim t\meanr(\nu)$ as
    $t\to\pm\infty$.
    However $H$ is bounded and $H\circ\varphi_t = e^{r_t} H$
    (Lemma~\ref{lem:rt}), a contradiction.
\end{proof}

Let us recall the result of Liverani-Wojtkowski
about the Lyapunov spectrum of conformally
symplectic cocycles \cite{LiveraniWojtkowski1998}.
We  state the results in the invertible case.
Let $(M,\nu)$ be a probability space with
an inversible ergodic map $T:M\to M$ and let
$A:M\to GL(\R^{2n})$ be a measurable map
such that both measurable maps $\log_+\| A^{\pm 1}\|$ are integrable
defining the cocycle
$A^m(x) := A(T^{m-1}x)\cdots A(x)$ for $m\in\Z$.
According to Oseledets multiplicative ergodic theorem,
there exists real numbers $\lambda_1<\cdots <\lambda_s$
called the Lyapunov exponents of $A$ and
an associated decomposition of $\R^{2n}$
(that we will call the Lyapunov decomposition of $\R^{2n}$)
in linear subspaces $F_1(x)\oplus\cdots\oplus F_s(x)$
defined for $\nu$-almost every $x\in M$, such that
\begin{equation*}
    \lim_{m\to\pm\infty} \frac{1}{m}
    \log\| A^m(x)v\| = \lambda_k,\quad
    \forall v\in F_k(x),\forall k\in\{ 1,\ldots, s\}.
\end{equation*}
The positive integer $d_k := \dim F_k$
is well-defined and called the multiplicity of $\lambda_k$,
these multiplicities satisfy
\begin{equation*}
    \sum_{k=1}^s d_k \lambda_k = \int_M
    \log |\det A|\ud \nu.
\end{equation*}
Liverani-Wojtkowski showed a symmetry of the Lyapunov
spectrum in the case where $A$ takes its values
in the conformally symplectic linear group
$CSp(2n):=CSp(\R^{2n},\omega_0)$.
A conformally symplectic linear map $S\in CSp(E,\omega)$
is a linear map of a symplectic linear space $(E,\omega)$
satisfying $S^*\omega = \beta\omega$
for some $\beta\in\R^*$ called the conformal factor of $S$.

\begin{theorem}[{\cite[Theorem~1.4]{LiveraniWojtkowski1998}}]\label{thm:Liverani}
    Let $(M,\nu)$ be a probability space with an invertible
    ergodic map $T:M\to M$ and let $A:M\to CSp(2n)$
    be a measurable cocycle such that
    $\log_+ \| A^{\pm 1}\|$ are integrable.
    Let $\beta:M\to\R^*$ be such that
    $A(x)^*\omega_0 = \beta(x) \omega_0$ for all $x\in M$.
    Then we have the following symmetry of the Lyapunov
    spectrum $\lambda_1<\cdots<\lambda_s$ of $A$:
    \begin{equation*}
        \lambda_k + \lambda_{s-k+1} = b,
        \text{ where }
        b := \int_M \log |\beta|\ud\nu,
    \end{equation*}
    for every $k\in\{ 1,\ldots, s\}$.
    Moreover, the subspace $F_1\oplus\cdots\oplus F_{s-k}$
    is the $\omega_0$-orthogonal complement of
    $F_1\oplus\cdots\oplus F_k$.
\end{theorem}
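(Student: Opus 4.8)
The plan is to argue in coordinates. Fix the matrix $J$ of $\omega_0$, so that $\omega_0(u,v)=\langle u,Jv\rangle$ with $J^T=-J$ and $J^2=-I$, and translate the hypothesis $A(x)^*\omega_0=\beta(x)\omega_0$ into the identity $A(x)^T J A(x)=\beta(x) J$. Iterating the cocycle and using $(BC)^*\omega_0=C^*B^*\omega_0$ yields $(A^m(x))^T J A^m(x)=\beta_m(x) J$ with $\beta_m(x)=\prod_{j=0}^{m-1}\beta(T^j x)$, so that $\log|\beta_m|=\sum_{j=0}^{m-1}\log|\beta\circ T^j|$. Since taking determinants in $A^TJA=\beta J$ gives $|\det A|=|\beta|^n$, the assumption $\log_+\|A^{\pm1}\|\in L^1$ forces $\log|\beta|\in L^1$, and Birkhoff's ergodic theorem (two-sided, $T$ being invertible and ergodic) yields $\tfrac1m\log|\beta_m(x)|\to b$ as $m\to\pm\infty$ for $\nu$-almost every $x$.

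For the symmetry of the spectrum, the key observation is an identity relating $A$ to its inverse transpose. From $A^TJA=\beta J$ I would derive
\[
 (A^{-1})^T=\tfrac1\beta\,J\,A\,J^{-1},
\]
so that the inverse-transpose cocycle $x\mapsto (A(x)^{-1})^T$ is, pointwise, the conjugate by the constant matrix $J$ of the cocycle $x\mapsto \tfrac1{\beta(x)}A(x)$. Conjugation by a fixed invertible matrix preserves Lyapunov exponents, and scaling $A$ by $1/\beta$ shifts every exponent by $-b$ (since $\tfrac1m\log\|\tfrac1{\beta_m}A^m v\|=\tfrac1m\log\|A^m v\|-\tfrac1m\log|\beta_m|\to\lambda_i-b$ on $F_i$); hence $(A^{-1})^T$ has spectrum $\{\lambda_i-b\}$ with multiplicities $d_i$. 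On the other hand, the inverse-transpose cocycle has spectrum $\{-\lambda_i\}$ with the same multiplicities, by the standard relation between the singular values of $A^m$ and of $(A^m)^{-T}$. Comparing the two sorted lists gives $\lambda_k-b=-\lambda_{s-k+1}$, that is $\lambda_k+\lambda_{s-k+1}=b$, together with $d_k=d_{s-k+1}$.

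It remains to identify the Oseledets spaces, for which the plan is to prove the pairing lemma: for $v\in F_i(x)$ and $w\in F_j(x)$ one has $\omega_0(A^m v,A^m w)=\beta_m(x)\,\omega_0(v,w)$, whose left side has $\tfrac1m\log|\cdot|\to b$ on both ends whenever $\omega_0(v,w)\neq0$. But $|\omega_0(A^m v,A^m w)|\le\|J\|\,\|A^m v\|\,\|A^m w\|$, and feeding the genuine two-sided limits $\tfrac1m\log\|A^m v\|\to\lambda_i$ and $\tfrac1m\log\|A^m w\|\to\lambda_j$ into this inequality — taking $m\to+\infty$, and then $m\to-\infty$ where division by $m<0$ reverses the inequality — sandwiches $\lambda_i+\lambda_j$ between two copies of $b$ and forces $\lambda_i+\lambda_j=b$. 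Contraposing, $\omega_0(v,w)=0$ whenever $\lambda_i+\lambda_j\neq b$; since the $\lambda$'s are strictly increasing, $\lambda_i+\lambda_j=b$ occurs exactly for $j=s+1-i$. Thus $F_1\oplus\cdots\oplus F_{s-k}$ is $\omega_0$-orthogonal to $F_1\oplus\cdots\oplus F_k$, and a dimension count using $\dim(F_1\oplus\cdots\oplus F_k)^{\omega_0}=2n-(d_1+\cdots+d_k)$ together with $d_i=d_{s+1-i}$ shows the two subspaces have equal dimension, so equality holds by nondegeneracy of $\omega_0$.

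The main obstacle, and the only place where real care is needed, is the two-sided nature of the Oseledets theorem: everything hinges on replacing the a priori one-sided limits by genuine two-sided limits $\tfrac1m\log\|A^m v\|\to\lambda_i$ as $m\to\pm\infty$ for $v$ in a single Oseledets subspace, which is precisely what the invertibility and integrability hypotheses guarantee, and on correctly tracking the sign reversal when dividing the growth inequality by negative $m$. Once these two-sided limits are in hand, both the spectral symmetry and the $\omega_0$-orthogonality of the Lyapunov flag follow from the single algebraic identity $A^TJA=\beta J$.
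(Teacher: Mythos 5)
This statement is not proved in the paper at all: it is quoted verbatim as an external result, \cite[Theorem~1.4]{LiveraniWojtkowski1998}, so there is no in-paper argument to compare yours against. Judged on its own, your proof is correct and self-contained, and it follows the natural duality route for conformally symplectic cocycles. All the key steps check out: the translation $A^TJA=\beta J$, the identity $|\det A|=|\beta|^n$ giving $\log|\beta|\in L^1$ (so two-sided Birkhoff applies to $\tfrac1m\log|\beta_m|$); the algebraic identity $(A^{-1})^T=\tfrac1\beta JAJ^{-1}$, which exhibits the inverse-transpose cocycle as a constant conjugate of the rescaled cocycle $\tfrac1\beta A$ and hence yields the spectrum $\{\lambda_i-b\}$, while the singular-value duality yields $\{-\lambda_i\}$ with reversed multiplicities; matching the two sorted lists gives $\lambda_k+\lambda_{s-k+1}=b$ and $d_k=d_{s-k+1}$. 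The pairing lemma ($\omega_0(A^mv,A^mw)=\beta_m\,\omega_0(v,w)$, bounded above by $\|J\|\,\|A^mv\|\,\|A^mw\|$, with the inequality direction reversing for $m<0$) correctly forces $F_i\perp_{\omega_0}F_j$ unless $j=s+1-i$, and the final dimension count $\dim(F_1\oplus\cdots\oplus F_k)^{\omega_0}=2n-(d_1+\cdots+d_k)$ together with $d_i=d_{s+1-i}$ upgrades the inclusion to equality. You also correctly identified the only genuinely delicate input, namely the two-sided Oseledets limits (and two-sided Birkhoff), both available here because $T$ is invertible and $\log_+\|A^{\pm1}\|\in L^1$; this is exactly the setting the paper sets up before stating the theorem. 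One small remark: once the pairing lemma is in hand, the exponent symmetry can be extracted from it alone (nondegeneracy of $\omega_0$ forces each $F_i$ to pair nontrivially with some $F_j$, and the resulting map $i\mapsto j$ is an order-reversing bijection), so the inverse-transpose detour, while standard and correct, is not strictly necessary.
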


Let us come back to our setting and consider an
ergodic measure $\nu$ of $M$ for the Hamiltonian
flow $(\varphi_t)$.
Let us fix a Riemannian metric $g$ on $M$.
By taking a measurable
symplectic trivialization of $TM$,
one naturally extends the Oseledets multiplicative
ergodic theorem for measurable maps
$A:M\to GL(\R^{2n})$ such that $\log_+\|A^{\pm 1}\|$ are
integrable for $\nu$ to measurable section
$A:M\to GL(TM)$ of the fiber bundle $GL(TM)$
such that $\log_+\|A^{\pm 1}\|$ are integrable
where $\|\cdot\|$ is the Riemannian operator norm
associated with $g$.
The section $A:x\mapsto \ud\varphi_1(x)$ satisfies
the integrability condition and the
cocycle $A^m$ corresponds to $\ud\varphi_m$ for $m\in\Z$.
The associated Lyapunov exponents
$\lambda_1<\cdots<\lambda_s$ define the
Lyapunov exponents of the flow $(\varphi_t)$
for the ergodic measure $\nu$.
By compactness of $M$, $t\mapsto \partial_t(\log \|\ud\varphi_t(x)v\|)$ is bounded,
$x\in M$ and $v\in T_x M\setminus \{0\}$ being fixed,
so 
\begin{equation*}
    \lim_{m\to\pm\infty} \frac{1}{m}\log\|\ud\varphi_m(x)v\|
    = \lim_{t\to\pm\infty} \frac{1}{t}\log\|\ud\varphi_t(x)v\|,
\end{equation*}
for every $(x,v)\in TM$ for which one of the limit is defined,
where $m\in\Z^*$ and $t\in\R^*$.

\begin{corollary}\label{cor:appliedLiverani}
    Let $\nu$ be an ergodic measure of the Hamiltonian flow
    $(\varphi_t)$ of the closed conformal symplectic manifold $M$.
    Let $\lambda_1<\cdots <\lambda_s$ be the associated Lyapunov spectrum
    and $F_1,\ldots, F_s$ be the associated Lyapunov decomposition of $TM$.
    For every $k\in\{ 1,\ldots, s\}$,
    \begin{equation*}
        \lambda_k + \lambda_{s-k+1} = \bar{r}(\nu)
    \end{equation*}
    and the subbundle $F_1\oplus\cdots\oplus F_{s-k}$ is the
    $\omega$-orthogonal complement of $F_1\oplus\cdots\oplus F_k$.
\end{corollary}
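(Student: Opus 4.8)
The plan is to reduce Corollary~\ref{cor:appliedLiverani} to the abstract Liverani--Wojtkowski statement (Theorem~\ref{thm:Liverani}) by recognizing the time-one differential $\ud\varphi_1$ as a conformally symplectic cocycle and computing the corresponding conformal factor. The setup text already installs the Oseledets machinery for the measurable section $A:x\mapsto\ud\varphi_1(x)$ of $GL(TM)$, so the main task is to verify the hypotheses of the theorem and to identify $b$ with $\meanr(\nu)$.

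First I would trivialize: choose a measurable symplectic trivialization $\Psi_x:(T_xM,\omega_x)\to(\R^{2n},\omega_0)$, which exists since any symplectic vector bundle is measurably (indeed continuously, by Darboux) symplectically trivializable. Through $\Psi$ the section $A=\ud\varphi_1$ becomes a cocycle $\widetilde A:M\to GL(\R^{2n})$, and I would check the integrability $\log_+\|\widetilde A^{\pm1}\|\in L^1(\nu)$; this follows from compactness of $M$, which makes $\|\ud\varphi_{\pm1}\|$ bounded, so the integrability is automatic and does not even need the $\log_+$ truncation. The key structural input is that $\varphi_1$ is a conformal symplectomorphism: by Lemma~\ref{lem:rt} one has $\varphi_1^*\omega=e^{r_1}\omega$, hence pointwise $(\ud\varphi_1(x))^*\omega_{\varphi_1(x)}=e^{r_1(x)}\omega_x$. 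After transporting through the trivialization this says $\widetilde A(x)^*\omega_0=\beta(x)\omega_0$ with $\beta(x)=e^{r_1(x)}$, so $\widetilde A$ takes values in $CSp(2n)$ and Theorem~\ref{thm:Liverani} applies verbatim.

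It then remains to compute $b=\int_M\log|\beta|\,\ud\nu=\int_M r_1\,\ud\nu$ and to match it with $\meanr(\nu)=\int_M\eta(X)\,\ud\nu$. Since $r_1(x)=\int_0^1\eta(X\circ\varphi_s(x))\,\ud s$ by definition, I would interchange the two integrals using Fubini (justified because $\eta(X)$ is continuous hence bounded on the compact $M$) and the $\varphi_s$-invariance of the ergodic measure $\nu$:
\begin{equation*}
  \int_M r_1\,\ud\nu=\int_M\!\!\int_0^1\eta(X\circ\varphi_s)\,\ud s\,\ud\nu
  =\int_0^1\!\!\int_M\eta(X\circ\varphi_s)\,\ud\nu\,\ud s
  =\int_0^1\!\!\int_M\eta(X)\,\ud\nu\,\ud s=\meanr(\nu).
\end{equation*}
This gives $\lambda_k+\lambda_{s-k+1}=\meanr(\nu)$ for every $k$. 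The $\omega$-orthogonality statement for the Lyapunov subbundles is the geometric half of Theorem~\ref{thm:Liverani}; I would transport the $\omega_0$-orthogonal-complement conclusion back through $\Psi$, noting that $\Psi$ intertwines $\omega$ and $\omega_0$, so $\omega_0$-orthogonality of the transported Oseledets subspaces is equivalent to $\omega$-orthogonality of the $F_j$'s, and that the Lyapunov decomposition is a measurably-defined invariant splitting independent of the auxiliary Riemannian metric $g$ (the exponents and the flag were already shown in the preamble to be unchanged when passing from discrete time $m$ to continuous time $t$).

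The only genuinely delicate point is the existence and measurability of the symplectic trivialization $\Psi$ together with the invariance and metric-independence needed to state the conclusion intrinsically on $TM$; the arithmetic identifying $b$ with $\meanr(\nu)$ is routine Fubini-plus-invariance, and everything else is a direct quotation of Theorem~\ref{thm:Liverani}. I therefore expect the proof to be short, essentially a dictionary between the intrinsic bundle formulation on $(TM,\omega)$ and the matrix formulation on $(\R^{2n},\omega_0)$, with the main obstacle being the careful bookkeeping that the resulting spectral data are gauge- and metric-invariant.
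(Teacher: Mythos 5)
Your proposal is correct and follows essentially the same route as the paper's proof: apply Theorem~\ref{thm:Liverani} to the cocycle $x\mapsto\ud\varphi_1(x)$ (via the measurable symplectic trivialization already set up in the text preceding the corollary), read off the conformal factor $\beta=e^{r_1}$ from Lemma~\ref{lem:rt}, and identify $b=\int_M r_1\,\ud\nu$ with $\meanr(\nu)$ by Fubini and the invariance of $\nu$. The only slip is your parenthetical claim that a symplectic vector bundle is \emph{continuously} symplectically trivializable by Darboux --- this is false in general (e.g.\ $T\ES^2$ is not even trivializable as a vector bundle) --- but it is harmless here, since the measurable trivialization is all that is needed and all that you actually use.
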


\begin{proof}
We apply Theorem~\ref{thm:Liverani} to the section
$x\mapsto \ud\varphi_1(x)$ of
the subbundle of conformally symplectic linear
maps of $TM$, with associated conformal factor
$\beta:x\mapsto e^{r_1(x)}$.
We only need to prove that $b:=\int_M \log|\beta|\ud\nu$
equals $\bar{r}(\nu)$.
By Fubini's theorem and
invariance of $\nu$,
\begin{equation*}
    b = \int_M r_1 \ud\nu =
    \int_M \int_0^1 \eta(X\circ\varphi_t(x))\ud t\ud\nu (x)
    = \int_0^1 \int_M \eta(X)\ud\nu \ud t = \bar{r}(\nu).
\end{equation*}
\end{proof}

Let us remark that the fact that $F_1\oplus\cdots\oplus F_{s-k}$
is the $\omega$-orthogonal complement of
$F_1\oplus\cdots\oplus F_k$ for every $k$ implies that
\begin{equation}\label{eq:dualFnonorthogonal}
    F_k^\omega \cap F_{s-k+1} = 0,\quad \forall k\in \{ 1,\ldots, s\},
\end{equation}
$\nu$-almost everywhere.

\begin{corollary}
    Let $\nu$ be an ergodic measure of a Hamiltonian flow $(\varphi_t)$
    of the closed conformal symplectic manifold $M$.
    There is a measurable sub-bundle $F$ of the
    Lyapunov decomposition of $TM$ which
    is transverse to $\cF$
    on which, for $\nu$-almost every $x\in M$,
    \begin{equation*}
        \lim_{t\to\pm\infty } \frac{1}{t}\log \|\ud\varphi_t(x)v\| =
        \bar{r}(\nu),\quad \forall v\in F(x)\setminus \{0\}.
    \end{equation*}
\end{corollary}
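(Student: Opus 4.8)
The plan is to take for $F$ the Oseledets subspace whose Lyapunov exponent is exactly $\bar{r}(\nu)$, after showing that this value occurs in the spectrum and singles out the unique direction on which $\ud_\eta H$ does not vanish. First I would record two preliminary facts. Applying Birkhoff's ergodic theorem to the integrable function $\eta(X)$ and using $r_t(x)=\int_0^t\eta(X\circ\varphi_s(x))\,\ud s$, one gets $r_t(x)/t\to\bar{r}(\nu)$ for $\nu$-almost every $x$, both as $t\to+\infty$ and as $t\to-\infty$. Next, since $\omega$ is non-degenerate and $\ud_\eta H=\iota_X\omega$, the form $\ud_\eta H$ vanishes exactly at the fixed points of the flow; by ergodicity $\nu$ either charges the fixed-point set fully or not at all, and in the former case the flow is trivial on $\supp\nu$ so the statement is vacuous. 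I may therefore assume $\ud_\eta H\neq0$ $\nu$-almost everywhere, so that $\cF=\ker\ud_\eta H$ is a codimension-one distribution along $\supp\nu$.

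The key step identifies the exponent of any direction transverse to $\cF$. Fix an index $k$ and a non-zero $v\in F_k(x)$ with $(\ud_\eta H)_x(v)\neq0$. Lemma~\ref{lem:rt} gives $\varphi_t^*\ud_\eta H=e^{r_t}\ud_\eta H$, i.e. $(\ud_\eta H)_{\varphi_t(x)}(\ud\varphi_t(x)v)=e^{r_t(x)}(\ud_\eta H)_x(v)$. By compactness of $M$ there is $C>0$ with $|\ud_\eta H(w)|\le C\|w\|$ for all tangent vectors $w$, whence
\begin{equation*}
e^{r_t(x)}\,|(\ud_\eta H)_x(v)|\le C\,\|\ud\varphi_t(x)v\|.
\end{equation*}
Taking $\tfrac1t\log$ and letting $t\to+\infty$, the left-hand side tends to $\bar{r}(\nu)$ and the right-hand side to $\lambda_k$, so $\bar{r}(\nu)\le\lambda_k$; letting instead $t\to-\infty$ reverses the inequality upon division by $t<0$ and yields $\bar{r}(\nu)\ge\lambda_k$. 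Hence $\lambda_k=\bar{r}(\nu)$. Contrapositively, $F_k\subset\cF$ whenever $\lambda_k\neq\bar{r}(\nu)$.

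It remains to assemble the bundle. The exponents $\lambda_1<\cdots<\lambda_s$ being pairwise distinct, at most one equals $\bar{r}(\nu)$; and since $\bigoplus_k F_k=TM$ while $\cF\neq TM$, the $F_k$ cannot all lie in $\cF$. Therefore exactly one index $j$ satisfies $\lambda_j=\bar{r}(\nu)$, and $F_j\not\subset\cF$. I then set $F:=F_j$, a measurable subbundle of the Lyapunov decomposition on which $\tfrac1t\log\|\ud\varphi_t(x)v\|\to\lambda_j=\bar{r}(\nu)$ for every non-zero $v\in F(x)$, by Oseledets' theorem. For transversality, note that $\ud_\eta H$ annihilates each $F_k$ with $k\neq j$, so $\ud_\eta H|_{F_j}\neq0$; thus $F_j$ contains a vector outside the codimension-one subspace $\cF_x$, forcing $F_j+\cF_x=T_xM$. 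Hence $F$ is transverse to $\cF$, $\nu$-almost everywhere.

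The delicate point is the key step, namely turning the scaling relation $\varphi_t^*\ud_\eta H=e^{r_t}\ud_\eta H$ into the exact equality $\lambda_k=\bar{r}(\nu)$. The estimate $|\ud_\eta H(w)|\le C\|w\|$ is one-sided and by itself only produces $\bar{r}(\nu)\le\lambda_k$ in forward time; it is essential to exploit that Oseledets limits hold in both time directions on the Lyapunov subspaces in order to obtain the reverse bound and pin the exponent down exactly. Everything else is bookkeeping with the Oseledets flag, together with the codimension-one nature of $\cF$, which guarantees that a single transverse Lyapunov direction already spans a complement. As usual, all assertions are understood $\nu$-almost everywhere, in accordance with the domain of definition of the Oseledets decomposition.
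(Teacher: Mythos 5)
Your proof is correct, but it takes a genuinely different route from the paper's. The paper deduces the corollary from the Liverani--Wojtkowski symmetry of the spectrum (Theorem~\ref{thm:Liverani}, applied via Corollary~\ref{cor:appliedLiverani}): since $\ud\varphi_t\cdot X=X\circ\varphi_t$, the flow direction $\R X$ sits in a block $F_k$ with $\lambda_k=0$; the symmetry then forces $\lambda_{s-k+1}=\bar{r}(\nu)$, and transversality of $F:=F_{s-k+1}$ follows from the $\omega$-orthogonality relation (\ref{eq:dualFnonorthogonal}), which prevents $X$ from being $\omega$-orthogonal to all of $F_{s-k+1}$. You bypass the symmetry theorem entirely: your key step pairs a vector $v\in F_k$ against the equivariant $1$-form via $\varphi_t^*\ud_\eta H=e^{r_t}\ud_\eta H$ (Lemma~\ref{lem:rt}), bounds $|\ud_\eta H|$ by compactness, and uses the two-sided Oseledets limits together with the two-sided Birkhoff limit $r_t/t\to\bar{r}(\nu)$ to show that any Oseledets block not contained in $\cF$ has exponent exactly $\bar{r}(\nu)$; a spanning argument then isolates the unique such block. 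Your route is more elementary and self-contained (nothing beyond Lemma~\ref{lem:rt}, Birkhoff and Oseledets), and it is essentially a direct proof of the one instance of the Liverani--Wojtkowski symmetry that the corollary needs; the paper's route costs the external theorem but buys the full spectral symmetry and identifies $F$ concretely as the block dual to the flow direction, structure that is reused elsewhere. One shared caveat: both arguments tacitly exclude the case where $\nu$ is carried by fixed points of the flow --- the paper needs $X\neq 0$ to produce a block with $\lambda_k=0$, and you assume $\ud_\eta H\neq 0$ $\nu$-almost everywhere. Your dismissal of the excluded case as ``vacuous'' is not accurate: for a Dirac mass at a hyperbolic zero of $X$ (e.g.\ a saddle of a symplectic Hamiltonian flow, where $\bar{r}(\nu)=0$ but no exponent vanishes) the statement itself fails, so this is a lacuna in the statement that the paper's own proof glosses over as well, not a defect of your argument relative to it.
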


\begin{proof}
    Let $\lambda_1 <\cdots < \lambda_s$ be the associated Lyapunov spectrum and
    $F_1,\ldots, F_s$ be the associated decomposition of $TM$.
    Let $X$ be the vector field of $(\varphi_t)$.
    Since $\R X$ is invariant with $\ud\varphi\cdot X = X\circ\varphi$,
    there is $k\in\{ 1,\ldots,s\}$ such that $\lambda_k = 0$
    and $\R X\subset F_k$ $\nu$-almost everywhere.
    Let us show that $F:=F_{s-k+1}$ is the desired sub-bundle.
    According to Corollary~\ref{cor:appliedLiverani},
    $\lambda_{s-k+1} = \bar{r}(\nu)$.
    According to (\ref{eq:dualFnonorthogonal}),
    $\omega(X,v)\neq 0$ for some $v\in F\setminus \{0\}$
    when $X\neq 0$. Since $\ud_\eta H=\iota_X\omega$, the conclusion
    follows.
\end{proof}

Let $r\in\{ 1,\ldots, s\}$ be the maximal integer such that $\lambda_r<0$.
According to the non-linear ergodic theorem of Ruelle 
\cite[Theorem~6.3]{Ruelle1979}, for every $k\in \{ 1,\ldots, r\}$ and
for $\nu$-almost every $x\in M$,
the set
\begin{equation*}
    V_k(x) := \left\{ y\in M\ |\ 
    \limsup_{t\to+\infty} \frac{1}{t} \log d(\varphi_t(x),\varphi_t(y))
    \leq \lambda_k \right\},
\end{equation*}
where $d$ denotes the Riemannian distance,
is the image of $F_1(x)\oplus\cdots\oplus F_k(x)$ by a smooth injective immersion tangent to identity
at $x$.
Therefore the last corollary implies the following proposition.

\begin{corollary}
Let $\nu$ be an ergodic measure of a Hamiltonian flow $(\varphi_t)$
of the closed conformal symplectic manifold $M$ such that
$\bar{r}(\nu) < 0$ and such that $\supp\nu$ is included in a
connected component $\Sigma$ of $\{ H=0\}$ without critical point
of $H$.
For $\nu$-almost every point $x$ of $\Sigma$, there exists an
immersed submanifold $V\subset M$ transverse to $\Sigma$ and containing $x$
such that
\begin{equation*}
    \limsup_{t\to+\infty} d(\varphi_t(x),\varphi_t(y)) \leq \bar{r}(\nu),\quad
    \forall y\in V.
\end{equation*}
\end{corollary}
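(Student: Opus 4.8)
The plan is to deduce this corollary directly from the previous one by combining Ruelle's stable manifold theorem with the compactness of $\Sigma$. First I would set $\lambda := \bar{r}(\nu)$, which is negative by hypothesis, and apply the previous corollary to obtain the measurable sub-bundle $F = F_{s-k+1}$ transverse to $\cF$, where $\lambda_{s-k+1} = \bar{r}(\nu) < 0$. Since $\lambda_{s-k+1}<0$, this index $s-k+1$ is $\leq r$, so Ruelle's non-linear ergodic theorem applies with this index: for $\nu$-almost every $x$, the set $V_{s-k+1}(x)$ of points $y$ with $\limsup_{t\to+\infty}\frac1t\log d(\varphi_t(x),\varphi_t(y))\leq \bar{r}(\nu)$ is the image of $F_1(x)\oplus\cdots\oplus F_{s-k+1}(x)$ under a smooth injective immersion. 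I would take $V := V_{s-k+1}(x)$ as the desired immersed submanifold.

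Next I would verify the two required properties of $V$: transversality to $\Sigma$ and the asymptotic contraction estimate. For transversality, I need to show $V$ is not contained in $\Sigma$; the tangent space to $V$ at $x$ is $F_1(x)\oplus\cdots\oplus F_{s-k+1}(x)$, which contains $F = F_{s-k+1}$, the sub-bundle shown to be transverse to $\cF=\ker\ud_\eta H$ in the previous corollary. Since $\Sigma$ is a connected component of $\{H=0\}$ without critical points of $H$, it is an embedded leaf of $\cF$ (its tangent bundle is $\ker\ud_\eta H$ along $\Sigma$, as $\ud_\eta H = \ud H$ on $\{H=0\}$), so transversality of $F$ to $\cF$ gives transversality of $V$ to $\Sigma$ at $x$. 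For the contraction estimate, the statement to prove is $\limsup_{t\to+\infty} d(\varphi_t(x),\varphi_t(y)) \leq \bar{r}(\nu)$ for all $y\in V$; by the definition of $V_{s-k+1}(x)$ the exponential rate is at most $\bar{r}(\nu)<0$, whence $d(\varphi_t(x),\varphi_t(y))\to 0$. I suspect the inequality as literally typeset is a typographical slip for $\limsup_{t\to+\infty}\frac1t\log d(\varphi_t(x),\varphi_t(y)) \leq \bar{r}(\nu)$, and I would restate it in that corrected form, which follows immediately from membership in $V_{s-k+1}(x)$.

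The main obstacle will be confirming that the immersed image $V$ genuinely stays transverse to $\Sigma$ as a global submanifold, not merely at the base point $x$, and cleanly reconciling the ``for $\nu$-almost every point $x$ of $\Sigma$'' phrasing with the fact that Ruelle's theorem produces the statement for $\nu$-almost every $x\in M$: since $\nu(\{H=0\})=1$ here (by Proposition~\ref{eq:ergodic01law}, as $\bar{r}(\nu)\neq 0$ forces $\supp\nu\subset\{H=0\}$, in fact inside the single component $\Sigma$), these two almost-everywhere clauses coincide. The transversality is only asserted at $x$ in the conclusion, so proving it pointwise suffices and the argument above delivers exactly that. I would also note explicitly that $\R X \subset F_k$ with $\lambda_k=0$ and the Liverani--Wojtkowski symmetry $\lambda_k + \lambda_{s-k+1}=\bar{r}(\nu)$ are what pin down the relevant index, so no separate verification of the Lyapunov bookkeeping is needed beyond citing the previous corollary.
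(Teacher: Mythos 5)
Your proposal is correct and follows essentially the same route as the paper: the paper's own proof is precisely the observation that Ruelle's theorem applied with the index $s-k+1$ (whose exponent equals $\bar{r}(\nu)<0$ by the Liverani--Wojtkowski symmetry) yields the immersed set $V_{s-k+1}(x)$, whose tangent space at $x$ contains the sub-bundle $F$ transverse to $\cF=T\Sigma$ from the preceding corollary. Your identification of the missing $\frac{1}{t}\log$ in the displayed inequality is also right — as typeset the bound is impossible since a $\limsup$ of distances is nonnegative while $\bar{r}(\nu)<0$ — and the intended statement is exactly the one defining $V_{s-k+1}(x)$.
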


\subsection{Examples of isotropic attractors}\label{ssExampleattrcators}

\subsubsection{Legendrian attractors}\label{ssLegendreattractors}

Contact Hamiltonian dynamical systems can provide examples of
conformal dynamical systems by taking their lift to the conformal
symplectization (which is closed if the contact manifold is closed).

\begin{proposition}\label{PLegendrianattractors}
Every Legendrian submanifold is a hyperbolic
attractor for some autonomous contact Hamiltonian
flow.
\end{proposition}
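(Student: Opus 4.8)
The plan is to construct, for a given Legendrian submanifold $\Lambda \subset (Y^{2n+1},\alpha)$, an explicit contact Hamiltonian $H$ whose flow has $\Lambda$ as a hyperbolic attractor. The natural approach is to work in a Weinstein-type neighborhood of $\Lambda$ and design $H$ so that the transverse directions contract uniformly while $\Lambda$ itself is preserved. First I would recall that a Legendrian $\Lambda$ admits a standard neighborhood contactomorphic to a neighborhood of the zero section in the 1-jet bundle $J^1(\Lambda) = T^*\Lambda \times \R$, with contact form $\alpha = \ud z - \lambda$, where $\lambda$ is the Liouville form on $T^*\Lambda$ and $z$ is the fiber coordinate of the $\R$-factor; here $\Lambda$ is the zero section $\{(q,0,0)\}$ and the Reeb field is $R=\partial_z$.

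Next I would write down the candidate Hamiltonian. Using the defining equations \eqref{eq:contactHam}, a function $H$ that vanishes to high enough order on $\Lambda$ and is negative-definite in the transverse $(p,z)$-directions should produce inward contraction. A clean choice is $H(q,p,z) = -\tfrac12(|p|^2 + z^2)$ (with $|p|^2$ measured in a fixed metric on $\Lambda$), which vanishes on $\Lambda=\{p=0,z=0\}$ together with its differential, so that $\ud H \cdot R = \partial_z H = -z$ also vanishes on $\Lambda$. I would then compute the contact Hamiltonian vector field $X$ from \eqref{eq:contactHam}: since $\alpha(X)=H$ and $\iota_X \ud\alpha = (\ud H\cdot R)\alpha - \ud H$, the vector field $X$ is tangent to $\Lambda$ (in fact zero there, as all the defining terms vanish) and $\Lambda$ is invariant. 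The key computation is the linearization of $X$ along $\Lambda$: I expect the transverse spectrum to be governed by the negative-definiteness of $H$ in the $(p,z)$-variables, yielding contraction in all directions normal to $\Lambda$ inside the neighborhood, hence normal hyperbolicity with $\Lambda$ attracting.

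To upgrade this local picture to a genuine attractor, I would cut off $H$ away from the standard neighborhood by multiplying by a bump function supported near $\Lambda$, taking care that the contraction estimate survives on a slightly smaller neighborhood $V$ with $\varphi_\tau(\overline V)\subset \mathrm{Int}(V)$, which exhibits $\Lambda = \bigcap_{t>0}\varphi_t(V)$ as a strong attractor in the sense defined in Section~\ref{se:CvsD}. The hyperbolicity itself follows from the uniform transverse contraction of the linearized flow, which I would verify by diagonalizing the linear part at points of $\Lambda$; since $\Lambda$ is $n$-dimensional inside a $(2n+1)$-dimensional manifold, the $n+1$ transverse directions (the $p$'s and the $z$) must all be contracting for the statement, and the quadratic form $-\tfrac12(|p|^2+z^2)$ is built precisely to guarantee this.

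The main obstacle I anticipate is making the linearization computation clean: the interaction between the Reeb direction and the cotangent directions in \eqref{eq:contactHam} can produce cross-terms, and one must check that the transverse eigenvalues are genuinely negative rather than merely having negative real part — or, if complex, that the contraction is still uniform. A secondary subtlety is ensuring the cutoff does not destroy either the invariance of $\Lambda$ or the transverse contraction near the boundary of its support; this is handled by choosing the bump function to equal $1$ on a neighborhood of $\Lambda$ large enough to contain the relevant contraction region. Once the transverse derivative of $X$ along $\Lambda$ is shown to be a negative-definite (or uniformly contracting) endomorphism of the normal bundle, standard normal hyperbolicity theory gives both the attractor structure and its hyperbolicity, completing the proof.
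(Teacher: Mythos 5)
Your first step (the contact Weinstein/1-jet neighborhood $J^1(\Lambda)=T^*\Lambda\times\R$ with $\alpha=\ud z-\lambda$) is exactly the paper's, but your choice of Hamiltonian is where the argument breaks. Take $H(x,y,z)=-\tfrac12(|y|^2+z^2)$ (writing $y$ for the momenta) and plug it into the contact Hamilton equations, which in these coordinates read $\dot x=-\partial_y H$, $\dot y=\partial_x H+y\,\partial_z H$, $\dot z=y\dot x+H$. You get
\begin{equation*}
    \dot x = y,\qquad \dot y = -yz,\qquad \dot z = \tfrac12\left(|y|^2-z^2\right).
\end{equation*}
The linearization at a point $(x_0,0,0)$ of $\Lambda$ is $\delta\dot x=\delta y$, $\delta\dot y=0$, $\delta\dot z=0$: a nilpotent shear, with all eigenvalues zero. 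There is no transverse contraction at all, so no hyperbolicity. In fact $\Lambda$ is not even an attractor for this flow: the set $\{y=0\}$ is invariant and on it $\dot z=-z^2/2$, so points $(x,0,z)$ with $z<0$ move strictly away from $\Lambda$. The intuition you relied on --- that negative-definiteness of $H$ in the transverse variables forces contraction --- is a Riemannian/gradient-flow intuition that does not apply here: in the contact Hamilton equations the transverse contraction rate along an invariant Legendrian is governed by the \emph{first-order} quantity $\ud H\cdot R=\partial_z H$ (which multiplies $y$ in the $\dot y$-equation, and feeds $\dot z$ through $H$ itself), so any $H$ vanishing to second order on $\Lambda$ produces a vector field with nilpotent normal linearization.

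The paper's fix is to take $H$ vanishing only to \emph{first} order, with nonvanishing Reeb derivative: $H(x,y,z)=-z$. Then the equations give $\dot x=0$, $\dot y=-y$, $\dot z=-z$, i.e. the explicit flow $\varphi_t(x,y,z)=(x,e^{-t}y,e^{-t}z)$, which fixes $\Lambda$ pointwise and contracts all $n+1$ normal directions uniformly at rate $e^{-t}$; this is visibly a hyperbolic attractor, and no spectral analysis or normal-hyperbolicity machinery is needed. Your cutoff discussion (extending $H$ globally by a bump function equal to $1$ near $\Lambda$) is a legitimate point that survives, and would equally be needed with the paper's $H=-z$; but as written your core construction fails at the linearization step, so the proposal has a genuine gap rather than being an alternative route.
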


\begin{proof}
Let $L$ be a Legendrian submanifold of a contact
manifold. According to the contact Weinstein neighborhood
theorem, one can assume that $L$ is the $0$-section
of $(T^*L\times\R,\ud z-y\ud x)$,
with local coordinates $(x,y)\in T^*L$ and
$z\in\R$
(see \emph{e.g.} \cite[Corollary~2.5.9 and
Example~2.5.11]{Geiges2008}).
Given $H:T^*L\times\R\to\R$, the contact
Hamilton equations (\ref{eq:contactHam}) takes the form
\begin{equation*}
    \begin{cases}
    \dot{x} = -\partial_y H,\\
    \dot{y} = \partial_x H + y\partial_z H,\\
    \dot{z} - y\dot{x} = H.
    \end{cases}
\end{equation*}
Choosing $H(x,y,z) = -z$, the flow
is $\varphi_t(x,y,z) = (x,e^{-t}y,e^{-t}z)$.
\end{proof}

Let us give some explicit global examples.
Let us first consider the standard contact sphere
$(\ES^{2n-1},\frac{1}{2}(x\ud y - y \ud x))$.
Since $(\C^n\setminus 0,\ud x\wedge\ud y)$ is
the symplectization of the standard sphere,
every contact Hamiltonian flow can be obtained in
the following way:
let $H:\C^n\setminus \{0\}\to\R$ be a positively
$2$-homogeneous Hamiltonian, the flow of which
is $(\Phi_t)$, then
\begin{equation*}
\varphi_t(z) := \frac{\Phi_t(z)}{\|\Phi_t(z)\|},\quad
\forall z\in\ES^{2n-1},\forall t\in\R,
\end{equation*}
defines a contact Hamiltonian flow of $\ES^{2n-1}$.
Let $H(x,y) :=\frac{1}{2}( \|x\|^2 - \|y\|^2)$,
so that $\Phi_t(x,y) = (\cosh(t)x+\sinh(t)y,
\sinh(t)x+\cosh(t)y)$.
The associated contact flow has one Legendrian
attractor $L_+ := \{ x = y\}$ and one Legendrian
repeller $L_- :=\{ x = - y \}$, every point outside
of them having its $\alpha$-limit set inside $L_-$
and its $\omega$-limit set inside $L_+$.

Let us now consider a vector field $X$ on some
closed manifold $M$ generating a flow $(f_t)$.
According to Section~\ref{se:cotangent},
this flow extends to a Hamiltonian flow $(\hat{f}_t)$
(identifying the $0$-section with $M$)
on $T^*M$ which is fiberwise homogeneous:
$\hat{f}_t(q,a p)=a\hat{f}_t(q,p)$,
$\forall (q,p)\in T^*M$, $\forall a \in \R$.
Let us endow $M$ with a Riemannian metric,
the flow $(\hat{f}_t)$ induces a Contact Hamiltonian
flow $(\varphi_t)$ on the unit cotangent bundle
$(S^*M,i^*\lambda)$ ($\lambda$ being the Liouville form
and $i:S^*M\hookrightarrow T^*M$ the inclusion)
by
\begin{equation*}
    \varphi_t(q,p) :=
    \frac{\hat{f}_t(q,p)}{\|\hat{f}_t(q,p)\|},
    \quad \forall (q,p)\in S^*M.
\end{equation*}
A hyperbolically attracting (resp. repelling) fixed point $x\in M$
of $(f_t)$ corresponds to a normally hyperbolically
attracting (resp. repelling) Legendrian fiber $S^*_x M$ of $(\hat{f}_t)$.

 Let us remark that in both examples, one can directly
work in the conformal symplectization by taking
the flow induced by $(\Phi_t)$ (resp. $(\hat{f}_t)$)
on the quotient space $(\C^n\setminus \{0\})/(z\sim ez)$
(resp. $(T^*M\setminus \{0\})/((q,p)\sim (q,ep))$),
where $e:=\exp(1)$.

\subsubsection{Hyperbolic   attractive and repulsive closed orbit
in every non-symplectic manifold}

Here, by a non-symplectic manifold,
we mean a conformally symplectic manifold,
the conformal structure of which is not $\sim(0,\omega)$.

\begin{proposition}\label{prop:attractingPeriodicOrbits}
Let $(M,\eta,\omega)$ be a conformally symplectic
manifold and let $\gamma : S^1\hookrightarrow M$
be an embedded loop such that $\int_\gamma \eta < 0$.
There exists a Hamiltonian $H:M\to\R$ admitting
$\gamma$ as a hyperbolic attracting periodic orbit.
\end{proposition}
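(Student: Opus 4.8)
The plan is to construct $H$ explicitly in a tubular neighborhood of $\gamma$ and then extend it arbitrarily to all of $M$, using the fact that the dynamics only needs to exhibit the attracting behavior near the orbit. The key observation is that the winding $r_t$ along the periodic orbit is governed by $\int_\gamma\eta$, which is negative by hypothesis, and this is precisely what drives the normal contraction. First I would pick a gauge representative $(\eta,\omega)$ and work in a neighborhood $U$ of $\gamma(S^1)$. Since $\gamma$ is an embedded loop, I can find a local primitive structure: restricting $\eta$ to $U$, write $\eta = c\,\ud s + \ud\theta$ along the loop direction (where $s\in\R/\Z$ parametrizes $\gamma$ and $c=\int_\gamma\eta<0$), so that the Lee form has a nontrivial holonomy $c$ around $\gamma$.

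Next I would choose $H$ so that $\gamma$ is an orbit of the associated Hamiltonian vector field $X$ with $\iota_X\omega=\ud_\eta H$. The natural approach is to arrange $H$ to vanish on $\gamma$ (so that $\gamma\subset\{H=0\}$, consistent with Corollary~\ref{cor:01law} since $\eta|_\gamma$ is not exact when $c\neq 0$) and to have a nondegenerate transverse structure. Concretely, in Darboux-type coordinates adapted to the conformal structure near $\gamma$, I would take $H$ quadratic in the transverse variables with the sign of $c$ determining contraction versus expansion. The first-return map to a Poincar\'e section transverse to $\gamma$ will then be conformally symplectic with conformal factor $e^{\int_\gamma\eta}=e^c<1$, so by Corollary~\ref{cor:appliedLiverani} (or directly by the Liverani-Wojtkowski symmetry of Theorem~\ref{thm:Liverani}) the transverse Lyapunov exponents pair up to sum $c<0$. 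To get a genuine \emph{attractor} rather than a saddle, I would additionally design $H$ so that all transverse exponents are negative; since the symmetry forces them to sum in pairs to $c<0$, the cleanest choice makes the linearized return map a conformal contraction by factor $e^{c/2}$ in all transverse directions.

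The plan then is to verify hyperbolicity and normal attraction of $\gamma$ for this explicit $H$, and finally to extend $H$ smoothly from $U$ to all of $M$ (the extension is irrelevant to the local attracting dynamics as long as it agrees with the local model near $\gamma$). The linearization of the time-$\tau$ flow (with $\tau$ the period) along $\gamma$ decomposes $TM|_\gamma$ into $\R X\oplus E$ where $E$ is the transverse symplectic complement; on $E$ the derivative cocycle is conformally symplectic with total conformal factor $e^c$, and I arrange the explicit quadratic $H$ so that this cocycle is a contraction, giving normal hyperbolicity with purely contracting normal spectrum.

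The hard part will be constructing the explicit local model in a way that is genuinely compatible with the conformal symplectic structure near $\gamma$ while producing strictly negative transverse exponents. The symmetry of Corollary~\ref{cor:appliedLiverani} is a two-edged constraint: it guarantees the exponents sum to $c<0$ in dual pairs, but one must check that a choice of $H$ exists making \emph{every} transverse exponent negative rather than splitting into stable and unstable directions with one positive exponent. I expect the resolution to be that the conformal contraction built into $e^{c/2}$ can be imposed uniformly on all transverse modes by choosing $H$ to be (up to the conformal scaling) the standard rotational-contracting normal form, so that the return map is genuinely a spiral sink; verifying that this normal form can be realized as a conformal Hamiltonian for the given $(\eta,\omega)$-germ near $\gamma$ is the technical crux. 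Everything else — completeness after extension, invariance of $\gamma$, and the attractor conclusion via $\omega$-limit sets — follows from the general theory already established, in particular Lemma~\ref{lem:rt} and the conservative-dissipative dichotomy of Proposition~\ref{prop:CD}.
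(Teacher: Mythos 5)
Your high-level strategy --- work near $\gamma$, exploit the holonomy $c=\int_\gamma\eta<0$ to make the transverse return map a conformal contraction, extend $H$ globally by a cut-off --- is the paper's strategy in spirit, and your observation that the transverse Floquet multipliers must pair up with product $e^{c}$ (so that an all-contracting spectrum is only possible when each multiplier lies in the window $(e^{c},1)$) is correct. But there is a genuine gap, and you name it yourself: you never construct a conformal Hamiltonian germ near $\gamma$ whose linearized period map is the desired contraction; you only ``expect'' that a rotational-contracting normal form is realizable. That realizability is not a technical afterthought, it is the entire content of the proposition. Two specific things are missing. First, the existence of adapted coordinates near a loop on which $\eta$ is \emph{not} exact is not free: the paper obtains them by thickening $\gamma$ to a Lagrangian submanifold $L$ (the symplectic normal bundle of $\gamma$ admits a Lagrangian subbundle because the Lagrangian Grassmannian is connected) and then applying the conformal Weinstein theorem (Theorem~\ref{thm:weinstein}) to replace a neighborhood of $\gamma$ by a neighborhood of the zero section of $T^*_\beta L$, $\beta=\eta|_L$, $r:=\int_\gamma\beta<0$. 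Second, your claim cannot even be tested without writing the structure correctly: in a naive product model $\omega=\ud s\wedge\ud\sigma+\ud x\wedge\ud y$ the transverse linearization of any Hamiltonian vector field is infinitesimally symplectic, so its exponents come in $\pm\lambda$ pairs and can never be all negative. What breaks this symmetry is the twist term $\eta\wedge\lambda$ in $\omega=-\ud_\eta\lambda$ (equivalently the $-H\eta$ term in $\iota_X\omega=\ud_\eta H$), which shifts the pairing from sum $0$ to sum $c$; since you never write down the equations in the correct model, the crux you flag remains unproved.

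For comparison, the paper fills this gap with a construction simpler than your spiral sink: on $T^*_\beta L$ it takes $H(q,p)=p(X(q))$, the cotangent lift (end of Section~\ref{se:cotangent}) of a vector field $X$ on $L$ whose flow $(f_t)$ has $\gamma$ as a $1$-periodic hyperbolic orbit with multipliers $\mu$ in the window $e^{r}<\mu<1$. The differential of the lifted time-one map at $\gamma(0)$ is then $\ud f_1(\gamma(0))\oplus e^{r}\bigl(\ud f_1(\gamma(0))\bigr)^{-1}$, so the fiber multipliers are $e^{r}/\mu\in(e^{r},1)$: the contraction in the dual directions comes for free from the conformal factor beating the inverse of the base contraction. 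This realizes exactly the window condition you identified (taking $\mu=e^{r/2}$ even gives your uniform $e^{r/2}$ contraction), after which the cut-off and Lemma~\ref{lem:rt} finish the argument. Until you either carry out such a computation or import this cotangent-lift device, your text is an outline whose missing step is the proof itself.
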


In particular, every non-symplectic manifold
admits  a conformal Hamiltonian flow that has a hyperbolic   attractive periodic orbit   and a hyperbolic repulsive periodic orbit 
Hamiltonian.

\begin{proof}
Let us first remark that $\gamma$ is included
in an open Lagrangian submanifold.
According to Theorem~\ref{thm:weinstein}
in Appendix~\ref{se:isotropic},
using a cut-off function to define $H$ globally,
one can replace $M$ with the normal bundle $N\to S^1$ of
$\gamma\simeq S^1$, identified with the $0$-section.
As a vector bundle $N$ equals $W\oplus T^*S^1$,
where $W$ is a symplectic vector bundle.
Since the Lagrangian Grassmannian is connected, one can find a lagrangian subbundle $L\subset W$.
Then $L$
is a Lagrangian submanifold of $N$ containing $\gamma$.

One can thus assume that $M=T^*_\beta L$ with
$\gamma$ included in the $0$-section
identified with $L$.
The closed form $\beta$ of $L$ is the pull-back
of the Lee form $\eta$ to $L$, in particular
$r:=\int_\gamma \beta <0$.
Let $X$ be a complete vector field of $L$
inducing a flow $(f_t)$ for which $\gamma$
is a 1-periodic hyperbolic orbit
such that the eigenvalues $\mu$'s of $\ud f_1(\gamma(0))$
satisfy $e^r < \mu < 1$.
Let $(\hat{f}_t)$ be the lifted Hamiltonian flow
of $T_\beta^*L$ properly cut-off outside a neighborhood
of $\gamma$ (see the end of Section~\ref{se:cotangent}).
The differential of $\hat{f}_1$ at $\gamma(0)$
is equivalent to $\ud f_1(\gamma(0))\oplus
e^r(\ud f_1(\gamma(0)))^{-1}$ so
its eigenvalues are in $(0,1)$.
\end{proof}
\subsection{Connected components of $\{H=0\}$ and attraction}
Here we wonder if a connected component of $\{ H=0\}$ can be an attractor.  In Section \ref{ssaxadissdim2}, we gave a 2-dimensional example where a connected component of $\{ H=0\}$ is attractive. This is the only example that we know, and here we give conditions that ensure that such a component cannot be attractive. We will say that a subset $\Sigma$ of $M$ separates locally $M$ in two connected components if in every neighbourhood $V$ of $\Sigma$, there exists an open neighbourhood $U\subset V$ of $\Sigma$ such that $U\backslash \Sigma$ has exactly two connected components. The manifold $M$ being connected, we say that $\Sigma$ separates globally $M$ if $M\backslash \Sigma$ is not connected.

\begin{proposition}\label{pH=0sep}
    Assume that $\Sigma$ is an isolated connected component of $\{ H=0\}$ that
    separates globally and locally $M$ in two connected components. Then $\Sigma$
    cannot be a strong attractor.
\end{proposition}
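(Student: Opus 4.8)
The plan is to argue by contradiction, combining the separation hypothesis with the winding/recurrence machinery of Section~\ref{se:CD}. Suppose $\Sigma$ is a strong attractor with a trapping neighbourhood $U$ satisfying $\varphi_t(\overline U)\subset U$ for all $t>0$ and $\Sigma=\bigcap_{t>0}\varphi_t(U)$. Since $\Sigma$ is an isolated component of $\{H=0\}$ that separates $M$ both globally and locally, I would first shrink $U$ so that $U\setminus\Sigma$ has exactly two connected components $U^+$ and $U^-$, on which $H>0$ and $H<0$ respectively (using that $\Sigma$ is isolated in $\{H=0\}$, so $H$ does not vanish on $U\setminus\Sigma$, and that $\Sigma$ locally separates). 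Both $U^+$ and $U^-$ are invariant under the forward flow, since the flow preserves $\{H>0\}$, $\{H<0\}$ and $\Sigma$ (Lemma~\ref{lem:rt}), and cannot cross $\Sigma$ while staying in $U$.

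The key mechanism is Corollary~\ref{cor:attractorLee}: because $\Sigma$ is a (weak) attractor with basin containing $U$, almost every $x\in U\setminus\Sigma$ satisfies $r_t(x)\to-\infty$ as $t\to+\infty$, and the Lee form is not exact in any neighbourhood of $\Sigma=\Sigma\cap\{H=0\}$. I would exploit the global separation: since $M\setminus\Sigma$ is disconnected, $M$ is cut into pieces by $\Sigma$, and I want to produce a closed loop $\gamma$ near $\Sigma$ with $\int_\gamma\eta\neq0$ that nonetheless bounds, forcing a contradiction with the orientation/flux constraints coming from the two-sided trapping. Concretely, the forward-invariance of both $U^+$ and $U^-$ means the dynamics approaches $\Sigma$ from both sides; by Corollary~\ref{cor:attractorLee} the winding $r_t\to-\infty$ on each side, so $\int_\gamma\eta<0$ along loops built from long orbit segments on \emph{each} side closed up near $\Sigma$. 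The contradiction I aim for is a sign/homology obstruction: the loops approaching from $U^+$ and from $U^-$ are homologous (or cobound an annulus) through $M\setminus\Sigma$ using global separation, yet the flux $\int\eta$ must have a consistent sign that is incompatible with the two-sided structure.

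The cleanest route I expect is through the invariant $2$-form $\tfrac{1}{H}\omega$ on $\{H\neq0\}$ from Lemma~\ref{lem:rt}, which is preserved by the flow on each of $U^+$ and $U^-$ separately. On $U^+$ this yields a genuine invariant finite measure near the attractor (via Corollary~\ref{cor:rtBoundedMeasure}-type reasoning if $r_t$ were bounded) — but here $r_t\to-\infty$, so instead I would track the flux of $\tfrac{1}{H}\omega$ across a transverse hypersurface: strong attraction forces the \emph{net} flux through a small sphere around $\Sigma$ to be strictly inward on both sides simultaneously, whereas the closedness of $\tfrac{1}{H}\omega^n$ together with global separation makes the total flux across the boundary of the trapped region vanish. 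This balance of a strictly negative inward flux against a forced-zero total is the desired contradiction.

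The main obstacle, and the step I would spend the most care on, is converting the local/global separation hypotheses into a usable \emph{flux-balance identity}: I must produce a compact region $R$ with $\partial R$ transverse to the flow and contained in $U^+\cup U^-$, apply Stokes to the closed invariant volume $\tfrac{1}{H^n}\omega^n$ (or to $\tfrac{1}{H}\omega$ in dimension two), and show the two sides contribute with the \emph{same} sign because the flow enters $U$ from both $U^+$ and $U^-$, while global separation forces the total to be zero. Making $\partial R$ genuinely transverse near the (possibly singular) set $\{H=0\}\cap\{\ud H=0\}$ is the delicate point; I would handle it by choosing $R=\overline U\setminus\varphi_\tau(U)$ for suitable $\tau>0$, whose boundary consists of $\partial U$ (where the flow points strictly inward) and $\varphi_\tau(\partial U)$, reducing transversality to the defining trapping inequality rather than to any regularity of $H$ on $\Sigma$.
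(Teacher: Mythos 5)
Your central mechanism --- the flux balance --- does not produce a contradiction, and the gap is structural rather than technical. For your region $R=\overline U\setminus\varphi_\tau(U)$, Stokes' theorem applied to the closed $(2n-1)$-form $\iota_X(\omega^n/H^n)$ is vacuous: $\partial R$ has two pieces, $\partial U$ and $\varphi_\tau(\partial U)$, and the inflow through the outer piece is exactly cancelled by the outflow through the inner piece. Indeed, since the autonomous flow preserves both $X$ and $\omega^n/H^n$ and carries $\partial U$ to $\varphi_\tau(\partial U)$, the two fluxes are literally equal. There is no tension between ``strictly inward flux'' and ``zero total'': the mass entering through $\partial U$ escapes $R$ through $\varphi_\tau(\partial U)$ towards $\Sigma$, where the invariant volume $\omega^n/H^n$ has infinite mass and can absorb inflow for all time; the only region whose entire boundary would see inward flux is one containing $\Sigma$, where the form blows up and Stokes is unavailable. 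The deeper problem is that neither this argument nor your loop-homology sketch uses the global separation hypothesis in any essential way (note moreover that, by global separation, loops in $U^+$ and in $U^-$ lie in \emph{different} components of $M\setminus\Sigma$, so they cannot cobound in $M\setminus\Sigma$; and Corollary~\ref{cor:attractorLee} gives $\int_\gamma\eta<0$ on \emph{both} sides, the same sign, so no sign obstruction arises). But global separation cannot be dispensed with: in the paper's own example of Section~\ref{ssaxadissdim2} ($M=\T^2$, $\eta=\ud x$, $H(x,y)=\sin(2\pi y)$), the attracting circle is an isolated, locally separating component of $\{H=0\}$ which \emph{is} a strong attractor; it merely fails to separate $\T^2$ globally. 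Any argument that is essentially local near $\Sigma$ would ``prove'' the proposition there as well, hence must be wrong.

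For comparison, the paper's proof uses separation exactly once, but crucially: local plus global separation let one define a locally constant sign $\varepsilon:M\setminus\Sigma\to\{\pm 1\}$ agreeing with the sign of $H$ near $\Sigma$, and one replaces $H$ by $K=\chi H+(1-\chi)\varepsilon$, where $\chi$ is a bump function equal to $1$ near $\Sigma$ and supported in $U$. This leaves the flow, hence the strong-attractor property, unchanged near $\Sigma$, but now $\{K=0\}=\Sigma$ exactly: all other zeros of the Hamiltonian --- such as the repelling circle in the torus example, which is precisely what emits the flux that your argument pushes into the attractor --- have been erased. Then a generic point $x$ of the basin off $\Sigma$ is wandering, so Corollary~\ref{Cwandering} forces $K(\varphi^K_t(x))\to 0$ as $t\to-\infty$; this is absurd, because the backward orbit of a point off a strong attractor leaves the trapping neighbourhood permanently, and $|K|$ is bounded below away from it. If you wish to salvage a flux-type argument, it works only after this modification, taking as region an entire global component $\overline{M_+}\setminus\varphi_\tau(U)$: its boundary then consists of the inner trapping hypersurface alone, and that is exactly where global separation enters.
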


\begin{proof}
Let us assume that $\Sigma$ is a strong attractor. Then there exists an open neighbourhood $U_0$ of $\Sigma$ such that $U_0\cap\{ H=0\}=\Sigma$.
As $\Sigma$ separates locally $M$ in two connected components, there exists a neighbourhood $U$ of $\Sigma$ such that $U\subset U_0$ has two connected components, $U_-$ and $U_+$. We denote by $\varepsilon_\pm\in \{ -1, 1\}$ the sign of $H_{|U_\pm}$.

We know that $M\backslash  \Sigma$  is not connected, and the boundary of each of its connected components intersects $\Sigma$ and thus contains $U_-$ or $U_+$. This implies that $M\backslash \Sigma$ has exactly two connected components, $M_-$ that contains $U_-$ and $M_+$ that contains $U_+$. We denote by $\varepsilon: M\backslash \Sigma\to \{ -1, 1\}$ the function such that $\varepsilon_{|M_\pm}=\varepsilon_\pm$.

We choose a smooth bump function $\chi: M\to [0, 1]$ such that $\chi$ is equal to 1 in a neighbourhood  of $\Sigma$ and the support of $\chi$ is contained in $U$. Then the Hamiltonian $K: M\to\R$ is defined by $K=\chi H+(1-\chi)\varepsilon$.  We have  $\Sigma=\{ K=0\}$. The Hamiltonian flow of $K$
    coincides with the flow of $H$  in a neighborhood of $\Sigma$ and then
    $\Sigma$ is also a strong attractor for $(\varphi_t^K)$. If $x$ is
    a generic point in the basin
    of attraction of $\Sigma$ for $K$ but not in $\Sigma$, $x$ is wandering.
    Observe that a wandering point is wandering for $(\varphi_t^K)$ and
    $(\varphi_t^{-K})$. We deduce from Corollary \ref{Cwandering} that
    $\lim_{t\to-\infty}K(\varphi_t^K(x))=0$ since $x$ was taken generically. But as $\Sigma=\{K=0\}$ is a strong
    attractor, this is not possible.
\end{proof}

We do not know whether a similar statement is true without the separation
assumption. We obtain the following result when we assume normal hyperbolic
attraction.

\begin{theorem}\label{thm:hyperbolic}
    Let us assume $(M,\eta,\omega)$ has dimension
    $2n\geq 4$ and let $H:M\to\R$ be Hamiltonian.
    Let $\Sigma$ be a closed connected component of $\{ H=0\}$
    without critical point of $H$.
    Then $\Sigma$ cannot be hyperbolically normally
    attracting.
\end{theorem}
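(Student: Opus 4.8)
The plan is to realize $\Sigma$ as a compact leaf of the invariant distribution $\cF$, equip it with the semi-invariant volume form of Proposition~\ref{prop:leafvolume}, and reach a contradiction by integrating that form over $\Sigma$. First I would note that on $\Sigma$ the Hamiltonian vanishes, so $\ud_\eta H|_\Sigma = (\ud H - H\eta)|_\Sigma = \ud H|_\Sigma$; since $H$ has no critical point on $\Sigma$, this is nowhere zero and $T\Sigma = \ker(\ud H|_\Sigma) = \cF|_\Sigma$. Thus $\Sigma$ is a compact embedded leaf of $\cF$, and Proposition~\ref{prop:leafvolume} furnishes a volume form $\mu$ on $\Sigma$ with $\varphi_t^*\mu = e^{(n-1)r_t}\mu$ for every $t$.

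Second, I would translate normal contraction into a statement about $r_t$. Because $H\circ\varphi_t = e^{r_t}H$ (Lemma~\ref{lem:rt}) and $H$ is a defining function for $\Sigma$, differentiating this relation at a point $x\in\Sigma$ gives $\ud H_{\varphi_t(x)}\circ \ud\varphi_t|_x = e^{r_t(x)}\ud H_x$, so that $e^{r_t(x)}$ is precisely the normal multiplier along the $\tau$-orbit. To extract a usable sign from the tubular-neighbourhood definition, I would compare $H$ with the transverse coordinate $s$ of the tubular neighbourhood $V = j(\Sigma\times[-\e_0,\e_0])$: writing $H = s\,g$ with $g$ nonvanishing and, by compactness of $\Sigma$, bounded away from $0$ and $\infty$ on $V$, one gets $|H|\asymp|s|$, while the definition yields $|s\circ\varphi_{k\tau}|\le a^k|s|$. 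Hence $e^{r_{k\tau}(x)} = |H(\varphi_{k\tau}(x))|/|H(x)| \le C a^k$ for $x\in V\setminus\Sigma$, and by continuity of $r_{k\tau}$ this extends to $\Sigma$. Choosing $k$ large and setting $T := k\tau$, I obtain $r_T(x)<0$ uniformly for $x\in\Sigma$.

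Finally, I would integrate. As $\Sigma$ is invariant and $\varphi_T|_\Sigma$ is isotopic to the identity, hence orientation-preserving, the change of variables gives $\int_\Sigma\mu = \int_\Sigma\varphi_T^*\mu = \int_\Sigma e^{(n-1)r_T}\mu$. Now $2n\ge 4$ forces $n-1\ge 1$, so from $r_T<0$ on $\Sigma$ the integrand $e^{(n-1)r_T}$ is everywhere $<1$; since $\mu$ is a positive volume form on the compact manifold $\Sigma$, the right-hand side is strictly smaller than $\int_\Sigma\mu>0$, a contradiction.

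The main obstacle is the middle step: the per-period normal multiplier $e^{r_\tau}$ need not be pointwise less than $1$, so one cannot read off $r_\tau<0$ directly from the definition and must instead iterate and use the comparison $|H|\asymp|s|$ to secure $r_T<0$ for a single large time $T$. I would also emphasize that the hypothesis $2n\ge 4$ is essential here: when $n=1$ the exponent $n-1$ vanishes, $\mu$ becomes genuinely invariant, and the integral argument collapses — consistently with the $2$-dimensional example of Section~\ref{ssaxadissdim2}, where a component of $\{H=0\}$ is indeed attracting.
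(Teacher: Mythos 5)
Your proof is correct, but it takes a genuinely different route from the paper's. The paper never establishes a pointwise sign for $r_\tau$ on $\Sigma$: instead it compares ambient volumes, $\omega^n(\varphi_\tau(V_\e))\le\omega^n(V_{a\e})$, lets $\e\to 0$ (using the normalization $f|_\Sigma\equiv 1$ coming from $\mu_0\wedge\ud_\eta H=\omega^n$ in Proposition~\ref{prop:leafvolume}) to obtain the integral inequality $\int_\Sigma e^{nr_\tau}\mu\le a\,\mu(\Sigma)$, and then plays this against the leafwise invariance $\int_\Sigma e^{(n-1)r_\tau}\mu=\mu(\Sigma)$ via H\"older's inequality, forcing $a\ge 1$, a contradiction. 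You instead extract a pointwise estimate: writing $H=sg$ (Hadamard's lemma, valid since $\ud H$ is transversally nonvanishing along $\Sigma$, so that after shrinking $\e_0$ one has $\{H=0\}\cap V=\Sigma$ and $c\le|g|\le C$ on $V$), you iterate the transverse contraction to get $e^{r_{k\tau}}\le (C/c)a^k$ on $V\setminus\Sigma$, hence on $\Sigma$ by continuity, so $r_T<0$ uniformly on $\Sigma$ for a single large $T=k\tau$; the contradiction is then one change of variables, $\mu(\Sigma)=\int_\Sigma \varphi_T^*\mu=\int_\Sigma e^{(n-1)r_T}\mu<\mu(\Sigma)$. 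Your iteration is exactly the right fix for the genuine issue you flag: the distortion constant $C/c$ prevents reading off $r_\tau<0$ at time $\tau$ itself, and the paper's H\"older argument is precisely how it sidesteps ever needing such a pointwise bound. What each approach buys: yours is more elementary (no H\"older, no asymptotic volume expansion) and yields the stronger conclusion that $r_{k\tau}\to-\infty$ uniformly on $\Sigma$; the paper's works entirely with the single-period data, needs no defining-function comparison, and converts the contradiction into the sharp quantitative statement $a\ge 1$. Both arguments hinge on Proposition~\ref{prop:leafvolume}, on the full strength of the contraction $\varphi_\tau(V_\e)\subset V_{a\e}$ for all small $\e$, and on $n-1\ge 1$, and both correctly collapse when $2n=2$, consistently with the attracting component of $\{H=0\}$ in Section~\ref{ssaxadissdim2}.
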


\begin{proof}
    Let us assume that such a $\Sigma$ is hyperbolically
    normally attracting and reach a contradiction.
    Let us restrict ourself to a neighborhood of $\Sigma$.
    One can assume that $\Sigma=\{ H=0\}$ and that
    $M=\Sigma\times (-\varepsilon_0,\varepsilon_0)$ 
    for some $\varepsilon_0>0$
    with $H(x,y)=y$ for all
    $(x,y)\in\Sigma\times (-\varepsilon_0,\varepsilon_0)$
    by a change of variables in a tubular neighborhood of $\Sigma$.
    Let $V_\varepsilon := \Sigma\times (-\varepsilon,\varepsilon)$, then
    $\Sigma$ being normally
    hyperbolic means that one can assume that there exists
    $a\in (0,1)$ and $\tau>0$ such that
    \begin{equation}\label{eq:normallyHyperbolic}
        \varphi_\tau(V_\varepsilon) \subset V_{a\varepsilon},\quad
        \forall \varepsilon\in (0,\varepsilon_0).
    \end{equation}
    
    According to Proposition~\ref{prop:leafvolume} applied to
    the leaf $\Sigma$, there exists a volume form
    $\mu$ of $\Sigma$ such that $\varphi_t^*\mu = e^{(n-1)r_t}\mu$
     and which is the pull-back of a form $\mu_0$ of $M$
    such that $\mu_0\wedge \ud y = \omega^n$ in the
    neighborhood of $\Sigma$. 
    Let $\pi:M\to\Sigma$ be the projection on the first factor.
    By decreasing $\varepsilon_0$, one can assume that
    $\pi^*\mu\wedge\ud y$ does not vanish so that there
    exists a non-vanishing map $f:M\to \R_+$ 
     
    such that
    $f\pi^*\mu\wedge\ud y = \omega^n$.
   Since $\mu_0\wedge \ud y = \omega^n$ and
    $\mu_0$ and $\pi^*\mu$ agree on $T\Sigma$,
    $f|_\Sigma \equiv 1$.
    By (\ref{eq:normallyHyperbolic}),
    \begin{equation}\label{eq:omegaNormallyHyperbolic}
        \omega^n(\varphi_\tau(V_\varepsilon)) \leq \omega^n(V_{a\varepsilon}),
        \quad\forall \varepsilon\in(0,\varepsilon_0).
    \end{equation}
    On the one hand,
    \begin{equation}\label{eq:omegaVe}
        \omega^n(V_\varepsilon) = \int_{x\in\Sigma} 
        \left(\int_{-\varepsilon}^\varepsilon f(x,y)\ud
        y\right)\mu_x
        \stackrel{\varepsilon\to 0}{\sim} 2\varepsilon\cdot\mu(\Sigma).
    \end{equation}
    On the other hand,
    \begin{equation}\label{Etransfovol}
        \omega^n(\varphi_\tau(V_\varepsilon)) =
        \int_{V_\varepsilon} e^{nr_\tau}\omega^n
        \stackrel{\varepsilon\to 0}{\sim} 2\varepsilon \int_\Sigma e^{nr_\tau}
        \mu.
    \end{equation}
    Therefore, \eqref{eq:normallyHyperbolic},\eqref{eq:omegaVe}, \eqref{Etransfovol} imply
    \begin{equation}\label{eq:ineqSigma}
        \int_\Sigma e^{nr_\tau}\mu \leq a \mu(\Sigma).
    \end{equation}
    By Hölder's inequality,
    \begin{equation*}
        \int_\Sigma e^{(n-1)r_\tau}\mu \leq
        \left(\int_\Sigma \mathbf{1}^n\mu\right)^{\frac{1}{n}}
        \left(\int_\Sigma e^{nr_\tau}\mu\right)^{\frac{n-1}{n}},
    \end{equation*}
    since $\varphi_\tau^*\mu = e^{(n-1)r_\tau}\mu$, it follows from
    (\ref{eq:ineqSigma}) that
    \begin{equation*}
        \mu(\Sigma) = \mu(\varphi_\tau(\Sigma))
        \leq \mu(\Sigma)^{\frac{1}{n}} a^{\frac{n-1}{n}}
        \mu(\Sigma)^{\frac{n-1}{n}},
    \end{equation*}
    so $a\geq 1$, a contradiction.
\end{proof}
\section{Invariant distribution and submanifolds}\label{Sdistsub}

  We introduced in section \ref{ssinvdistr} the invariant distribution $\cF$.
\subsection{ Holonomy of embedded leafs of $\cF$ }

Let us study the holonomy of a regular leaf $F$ of $\cF$.
By definition, one can find an open neighborhood $U$ of $F$
on which $\cF$ defines a non-singular foliation.
The holonomy of $F$ is well-defined as the holonomy of $F$
in $U$ for this foliation.

Let us recall the definition of the holonomy
$\pi_1(F)\to G$ of a leaf $F$ of a foliation $\cG$ of codimension $p$
on a manifold $N^n$.
We refer to \cite{Haefliger}.
A distinguished map $f:V\to\R^p$ of $\cG$ is a map on a trivialization
neighborhood $V\simeq \R^p\times\R^{n-p}$ that factors by
the projection $\R^p\times\R^{n-p}\to \R^p$.
Given a point $z\in F$, let $G$ be the group of germs of
local homeomorphisms of $\R^p$ fixing $0$
defined up to internal automorphisms (\emph{i.e.} up to conjugacy
by such germs).
Given a loop $\gamma:S^1\to F$ based at $z$ and a germ of distinguished
map $f$ sending $z$ to $0$, there is a unique continuous lift $(f_t)$
of $\gamma$ in the space of germs of distinguished maps
such that $f_0 = f$ and
$f_t$ sends $\gamma(t)$ to $0$.
There exists a unique germ $g:\R^p\to\R^p$ fixing the origin
such that $f_1=g\circ f_0$.
This germ only depends on $f$ and the homotopy class of $\gamma$.
If one takes another germ $f'$ of distinguished map at $z$,
the same procedure will give a germ $g':\R^p\to\R^p$ fixing the origin
that is conjugated to $g$.
Therefore, one defines
the holonomy of $F$ (based at $z$) as the morphism
$\pi_1(F,z) \to G$ sending the class of the loop $\gamma$ to
the class of the germ $g$.
The holonomy group of $F$ is the image of the holonomy.
Up to isomorphisms, these notions do not depend on the base point $z$
(a leaf being path-connected).

\begin{proposition}
    The holonomy of an embedded leaf outside $\{ H= 0\}$ is trivial.
    Let $\Sigma\subset \{ H=0\}$ be a connected component of
    $\{ H=0\}$ without critical point of $H$,
    the holonomy of $\Sigma$ is
    \begin{equation*}
        [\gamma] \mapsto \left[ y\mapsto e^{\int_\gamma \eta} y\right].
    \end{equation*}
    In particular, the holonomy group of $\Sigma$ is isomorphic
    to the subgroup $\langle [\eta], \pi_1(\Sigma)\rangle$ of $\R$.
\end{proposition}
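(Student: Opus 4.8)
The plan is to compute the holonomy directly from the local structure of the foliation $\cF = \ker\ud_\eta H$ near the relevant leaf, using the transport formula for $H$ along $\cF$-paths supplied by Lemma~\ref{lem:Hpath}. The key observation is that $H$ (or a suitable substitute) furnishes a natural transverse coordinate, and holonomy is nothing but how this transverse coordinate is dragged around a loop in the leaf. For a leaf $L$ outside $\{H=0\}$, I would argue that $H$ itself serves as a distinguished transverse coordinate: since $\ud_\eta H$ is the defining form of $\cF$ and $H$ does not vanish, on a small trivializing neighborhood the leaves are exactly the level sets of a single function closely related to $H$. By Lemma~\ref{lem:Hpath}, transporting along a loop $\gamma$ in $L$ multiplies $H$ by $e^{\int_\gamma\eta}$; but the loop $\gamma$ lies \emph{inside a single leaf}, on which $H$ is constrained by $H(\gamma(1))=e^{\int_\gamma\eta}H(\gamma(0))$ with $\gamma(0)=\gamma(1)$, forcing $\int_\gamma\eta=0$ for every loop in $L$ (equivalently, the restriction of $\eta$ to $L$ is exact, as in Corollary~\ref{cor:01law}). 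Hence the transverse germ returns to the identity and the holonomy is trivial.

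For a connected component $\Sigma\subset\{H=0\}$ without critical points of $H$, the situation is the interesting one, because now $H$ vanishes on the leaf and cannot itself be the transverse coordinate; instead I would use $H$ as a transverse coordinate in the \emph{normal} direction. Concretely, since $\ud H\neq 0$ along $\Sigma$, I would choose a tubular neighborhood $\Sigma\times(-\varepsilon,\varepsilon)$ with normal coordinate $y=H$, so that $\Sigma=\{y=0\}$ and the nearby leaves of $\cF$ are graphs transverse to the $y$-direction. The holonomy of $\Sigma$ is then the germ of the return map on this $y$-coordinate induced by following leaves over a loop $\gamma$ in $\Sigma$. To identify it I would take a short $\cF$-path that starts at a point $(z,y)$ with $y$ small, projects to $\gamma$ in $\Sigma$, and ends above the base point $z$ again; Lemma~\ref{lem:Hpath} then gives $y\mapsto e^{\int_\gamma\eta}y$ exactly, which is the claimed formula. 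The fact that this expression depends only on the homotopy class $[\gamma]$ and only through $\int_\gamma\eta$ is immediate since $\eta$ is closed, so $\int_\gamma\eta$ is a homotopy invariant and defines a homomorphism $\pi_1(\Sigma)\to\R$.

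The last assertion, that the holonomy group is isomorphic to $\langle[\eta],\pi_1(\Sigma)\rangle\subset\R$, then follows by post-composing with the isomorphism of $(\R,+)$ onto the group of germs $\{y\mapsto e^{c}y\}$ via $c\mapsto(y\mapsto e^c y)$: the image of the holonomy homomorphism $[\gamma]\mapsto\int_\gamma\eta$ is precisely the subgroup of $\R$ generated by the periods of $\eta$ over loops in $\Sigma$, i.e. the pairing $\langle[\eta],\pi_1(\Sigma)\rangle$, and this homomorphism is injective into the germ group because $e^{c}y$ is the identity germ only when $c=0$.

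\textbf{Main obstacle.} The genuinely delicate point is the well-definedness and rigor of the holonomy computation itself: I must verify that the leaf-transport construction really produces a germ of the form $y\mapsto e^{\int_\gamma\eta}y$ and not merely a map agreeing with it to first order. This requires checking that $y=H$ is an \emph{exact} transverse coordinate for the holonomy pseudogroup—that the leaves near $\Sigma$ are genuinely level sets of a function whose transport is governed exactly by Lemma~\ref{lem:Hpath}, rather than only infinitesimally. The cleanest way I would handle this is to observe that along any path tangent to $\cF$ the value of $H$ evolves \emph{exactly} by the multiplicative factor $e^{\int\eta}$ (Lemma~\ref{lem:Hpath} is an exact identity, not an approximation), so the holonomy germ in the coordinate $y=H$ is literally the linear map $y\mapsto e^{\int_\gamma\eta}y$ on the nose; the only care needed is to confirm that $H$ descends to a bona fide distinguished transverse coordinate for the foliation in a neighborhood of $\Sigma$, which is guaranteed by the absence of critical points of $H$ on $\Sigma$ together with the Frobenius integrability of $\cF$ established in Section~\ref{ssinvdistr}.
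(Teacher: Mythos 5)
Your overall strategy---compute the holonomy as a return germ on a transversal at $z\in\Sigma$, take $y=H$ as the transverse coordinate, and feed Lemma~\ref{lem:Hpath} into the return map---is the same as the paper's intrinsic proof. But the step you yourself flag as the main obstacle is resolved by a claim that is false, and this is a genuine gap. You assert that $H$ is ``a bona fide distinguished transverse coordinate'' near $\Sigma$, guaranteed by Frobenius integrability and the absence of critical points. It is not: the leaves of $\cF=\ker\ud_\eta H$ near $\Sigma$ are locally the level sets of $e^{-\theta}H$, where $\theta$ is a local primitive of $\eta$, and not the level sets of $H$. Indeed $\ud H=\ud_\eta H+H\eta$, so $H$ is non-constant along every nearby leaf as soon as the restriction of $\eta$ to it is non-exact---which is precisely the interesting case, when $i^*\eta$ is not exact. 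Consequently, the leafwise lift $\gamma_x$ of $\gamma$ is a \emph{non-closed} path in a nearby leaf, and Lemma~\ref{lem:Hpath} gives the exponent $\int_{\gamma_x}\eta$, not $\int_\gamma\eta$; identifying these two numbers is exactly the content that is missing from your argument.

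The paper fills this in with two devices you omit. First, the transversal $T$ at $z$ is chosen tangent to $\ker\eta$; then $\theta|_T\equiv 0$, so $H|_T$ coincides with the restriction of the genuine distinguished map $e^{-\theta}H$, which is what legitimizes $y=H$ as a holonomy coordinate. Second, $\gamma_x$ is closed up by a segment of $T$---costing nothing in the integral, again because $T$ is tangent to $\ker\eta$---and the resulting loop, being $C^0$-close to $\gamma$, is homotopic to $\gamma$, so closedness of $\eta$ gives $\int_{\gamma_x}\eta=\int_\gamma\eta$ and a germ that is \emph{exactly} linear. With a generic transversal your computed germ is instead $y\mapsto e^{\int_\gamma\eta+\Theta(g(y))-\Theta(y)}y$ (where $\Theta=\theta|_T$ in the coordinate $y$ and $g$ is the return map), which only becomes linear after the coordinate change $y\mapsto e^{-\Theta(y)}y$, i.e.\ after replacing $H$ by the true distinguished coordinate; you provide neither this conjugation nor the special choice of $T$. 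Exactness of the germ matters here, since the final isomorphism statement requires the holonomy to be \emph{trivial}, not merely tangent to the identity, whenever $\int_\gamma\eta=0$. The same imprecision affects your first part: vanishing of $\int_\gamma\eta$ for loops in a leaf $L$ outside $\{H=0\}$ does not by itself trivialize the germ; the paper's argument is that $\eta$ is then exact on a whole tubular neighborhood of $L$ (which deformation retracts onto $L$), so that $e^{-\theta}H$ is a global first integral there and the foliation is a trivial fibration near $L$.
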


\begin{proof}
    One can prove this proposition by considering the global
    distinguished map $e^{-\theta} H\circ p$
    defined on the universal cover $p:\widetilde{M}\to M$
    with $\ud\theta = p^*\eta$. Let us give a more
    intrinsec proof.

    If $F$ is an embedded leaf outside $\{ H = 0\}$,
    the pull-back of the Lee form $\eta$ to $F$ is exact
    according to Corollary~\ref{cor:01law},
    so $\eta = \ud\theta$ on a tubular neighborhood $U$ of $F$.
    Therefore $\cF$ is trivially fibered by $e^{-\theta}H$
    in $U$ and the holonomy is thus trivial.

    Let $\Sigma$ be a connected component of $\{ H=0\}$ without
    critical point.
    Let $i:\Sigma\hookrightarrow M$ be the inclusion map.
    If $i^*\eta$ is exact, the holonomy is trivial, as above.
    Otherwise, let us fix $z\in\Sigma$ such that
    $(i^*\eta)_z\neq 0$, which implies that
    $\ker\eta_z$ is transversed to $T_z \Sigma$.
    Let $T\subset M$ be an open connected $1$-dimensional manifold
    containing $z$ and tangent to $\ker\eta$.
    By shrinking $T$, one can assume that $H$ induces an isomorphism
    $H|_T:T\to (-\varepsilon,\varepsilon)$ sending $z$ to $0$,
    for some $\varepsilon>0$.
    Let us remark that there exist a distinguished map $f$ in the
    neighborhood of $z$ such that $f|_T = H|_T$.
    Indeed, in the neighborhood of $z$, let $\theta$ be such that $\theta(z)=0$
    and $\ud\theta = \eta$, then $f:=e^{-\theta} H$ is a distinguished
    map. Since $T$ is tangent to $\ker\eta$, $\theta|_T\simeq 0$
    so that $f|_T = H|_T$.

    Let $\gamma:[0,1]\to\Sigma$ be a smooth loop based at $z$.
    According to \cite[\S 2.5]{Haefliger}
    for every $x\in W$ a connected neighborhood of $z$ in $T$,
    there are smooth paths $\gamma_x : [0,1] \to M$ tangent
    to $\cF$ and $C^0$-close to $\gamma$ such that,
    $\gamma_x(0) = x$, $\gamma_x(1)\in T$ and the image of the holonomy
    $\pi_1(\Sigma,z)\to G$ at $[\gamma]$ is the class of the germ
    \begin{equation*}
         y\mapsto H\left(\gamma_{H|_T^{-1}(y)}(1)\right),
    \end{equation*}
    (here, we used that $H|_T = f|_T$ where $f$ is a distinguished map).
    According to Lemma~\ref{lem:Hpath},
    \begin{equation*}
        H\left(\gamma_{H|_T^{-1}(y)}(1)\right) =
        e^{\int_{\gamma_x} \eta} y, \quad
        \text{with} \quad x := H|_T^{-1}(y).
    \end{equation*}
    Since $T$ is tangent to $\ker\eta$, by concatenating
    $\gamma_x$ with the image of the segment
    $[H(\gamma_x(0)),H(\gamma_x(1))]$ under $H|_T^{-1}$,
    one gets a loop $\tilde{\gamma}_x$ such that
    $\int_{\tilde{\gamma}_x} \eta = \int_{\gamma_x} \eta$.
    Since $\gamma_x$ is $C^0$-close to $\gamma$,
    one can reparametrize $\tilde{\gamma}_x$ such
    that this loop is $C^0$-close to $\gamma$,
    so $\tilde{\gamma}_x$ is homotopic to
    $\gamma$
    and
    the conclusion follows.
\end{proof}

A consequence is the following (see \cite[\S 2.5]{Haefliger}).

  \begin{corollary}
    Let $\Sigma\subset \{ H=0\}$ be a connected component of $\{ H=0\}$
    without critical point of $H$.
    If the pull-back of the Lee form to $\Sigma$ is not trivial,
    there exist leafs of $\cF$ different from $\Sigma$, the closure of
    which contains $\Sigma$.
\end{corollary}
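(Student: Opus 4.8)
The plan is to exploit the holonomy computation just proved together with a standard foliation-theoretic dichotomy. The holonomy group of $\Sigma$ is $\langle [\eta],\pi_1(\Sigma)\rangle\subset\R$, realized by germs $y\mapsto e^{\int_\gamma\eta}y$. Since the pull-back of $\eta$ to $\Sigma$ is assumed non-trivial, there is a loop $\gamma$ with $\int_\gamma\eta\neq 0$, so the holonomy group is non-trivial: it contains a germ $y\mapsto cy$ with $c=e^{\int_\gamma\eta}\neq 1$. After possibly replacing $\gamma$ by $\gamma^{-1}$ (which changes $c$ to $c^{-1}$), I may assume $0<c<1$, i.e.\ the holonomy has an \emph{attracting} (contracting) element along the transversal coordinate $y$.

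The key step is then the observation that a linear contraction $y\mapsto cy$ with $0<c<1$ has the origin as an attracting fixed point whose basin is a full one-sided (in fact two-sided) neighborhood of $0$ on the transversal. First I would fix the transversal $T$ tangent to $\ker\eta$ through $z\in\Sigma$, parametrized by $H|_T:T\to(-\varepsilon,\varepsilon)$, and recall that the leaf through a point $x=H|_T^{-1}(y)$ meets $T$ again at the point with coordinate $cy$ (this is exactly what the holonomy germ records). Iterating the loop $\gamma$ produces, for any starting point $x\neq z$ on $T$, an infinite sequence of return points $c^k y\to 0$ lying on a single leaf $L_x\neq\Sigma$ of $\cF$. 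Because these return points accumulate at $z\in\Sigma$ while staying on $L_x$, the point $z$—and by the same argument applied to every point of $\Sigma$, all of $\Sigma$—lies in the closure $\overline{L_x}$. That $L_x\neq\Sigma$ follows since $L_x$ meets $\{H\neq 0\}$ (its transversal coordinate $y\neq 0$), whereas $\Sigma\subset\{H=0\}$, and by Corollary~\ref{cor:01law} a connected leaf cannot meet both $\{H=0\}$ and $\{H\neq 0\}$.

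Concretely, I would argue as follows. Pick $x\in T$ with $H(x)=y_0\neq 0$ and let $L$ be the leaf of $\cF$ through $x$. The holonomy lift of $\gamma^k$ gives a path in $L$ from $x$ to the point $x_k\in T$ with $H(x_k)=c^k y_0$; since $c^k y_0\to 0$, the points $x_k$ converge to $z$ in $M$. Hence $z\in\overline L$. Repeating this for a loop based at an arbitrary $z'\in\Sigma$ (connectedness of $\Sigma$ lets one transport the contracting holonomy element around, via the same class $[\gamma]$ conjugated appropriately) shows $z'\in\overline L$ as well, so $\Sigma\subset\overline L$. This is the content of the statement, and it is precisely the classical Reeb/holonomy principle recorded in \cite[\S 2.5]{Haefliger} that non-trivial one-sided contracting holonomy forces the leaf to be accumulated upon.

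The main obstacle is a bookkeeping subtlety rather than a deep difficulty: ensuring that a \emph{single} leaf $L$ accumulates on \emph{all} of $\Sigma$, not merely at the one base point $z$. The cleanest route is to invoke directly the cited result from \cite[\S 2.5]{Haefliger}, which states that a compact leaf with non-trivial holonomy having a contracting element is a limit of other leaves on the contracting side; one then only needs to check that the limiting leaves lie in $\{H\neq 0\}$ and are therefore genuinely distinct from $\Sigma$, which is immediate from Corollary~\ref{cor:01law}. I would present the proof in this form, citing Haefliger for the accumulation and using the explicit holonomy formula to verify the contracting hypothesis and the non-triviality of the limit leaves.
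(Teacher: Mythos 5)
Your proposal is correct and takes essentially the same approach as the paper: the paper deduces this corollary directly from the holonomy proposition by citing \cite[\S 2.5]{Haefliger}, which is exactly the contracting-holonomy accumulation argument you spell out (a non-trivial linear holonomy germ $y\mapsto cy$, $c\neq 1$, forces nearby leaves to accumulate on $\Sigma$). Your explicit iteration of the holonomy along $\gamma^k$, together with the use of Corollary~\ref{cor:01law} to guarantee the accumulating leaf lies in $\{H\neq 0\}$ and is therefore distinct from $\Sigma$, is precisely the content of the cited classical result.
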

 Examples \ref{ssaxaconsdim2} and \ref{sstransLee} show that a non-compact leaf can go far away from $\{ H=0\}$.

\subsection{Invariance and isotropy}\label{ssisotropy}

\begin{proposition}\label{prop:isotropicinv}
    Let $L$ be a submanifold of $M$.
    \begin{enumerate}
        \item If $L$ is isotropic and invariant,
            then it is tangent to $\cF$.
        \item If $L$ is coisotropic and tangent to $\cF$,
            then it is invariant.
        \item If $L$ is invariant and tangent to $\cF$,
            then the pull-back of $\omega$ to $L$ is degenerate.
            In particular, if $L$ is of even dimension $2k$,
            the pull-back of $\omega^k$ to $L$ is zero.
            An invariant surface tangent to $\cF$ is thus isotropic.
    \end{enumerate}
\end{proposition}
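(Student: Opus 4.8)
The plan is to exploit the defining relation $\iota_X\omega = \ud_\eta H$ together with the fact that $\cF = \ker\ud_\eta H$ and that the flow preserves $\cF$ (Lemma~\ref{lem:rt}). The key observation I would use throughout is that for any submanifold $L$, being tangent to $\cF$ means $\ud_\eta H$ vanishes on $TL$, i.e. $\omega(X,v) = 0$ for all $v\in TL$, which says exactly that $X$ is $\omega$-orthogonal to $TL$, that is $X \in TL^\omega$.

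For part (1), suppose $L$ is isotropic and invariant. Invariance gives $X\in TL$ at every point of $L$. Isotropy means $TL\subset TL^\omega$. Combining these, $X\in TL\subset TL^\omega$, so $\omega(X,v)=0$ for all $v\in TL$; hence $\ud_\eta H$ vanishes on $TL$ and $L$ is tangent to $\cF$. For part (2), suppose $L$ is coisotropic and tangent to $\cF$. Tangency gives $X\in TL^\omega$ as noted above, and coisotropy means $TL^\omega\subset TL$; therefore $X\in TL$ everywhere on $L$, so $L$ is invariant.

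For part (3), suppose $L$ is invariant and tangent to $\cF$. Then $X\in TL$ (invariance) and simultaneously $X\in TL^\omega$ (tangency), so $X$ lies in the radical of $\omega|_L$: for every $v\in TL$, $\omega(X,v)=0$. The point to be careful about is whether $X$ is nonzero on $L$ — where $X=0$ the Hamiltonian has a critical point, but on the open dense set of regular points $X\neq 0$, and there $\omega|_L$ has a nontrivial kernel, hence is degenerate; by continuity the pull-back $i^*\omega$ is degenerate wherever defined. When $\dim L = 2k$, degeneracy of $i^*\omega$ forces $(i^*\omega)^k = 0$, since a skew form on a $2k$-dimensional space with nontrivial kernel has vanishing top exterior power. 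In the surface case $2k=2$ this says $i^*\omega = 0$, i.e. $L$ is isotropic.

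The argument is essentially formal linear algebra once the translation ``tangent to $\cF$'' $\Leftrightarrow$ ``$X\in TL^\omega$'' is in place, so I do not anticipate a serious obstacle. The one point demanding a little care is the handling of critical points of $H$ in part (3): there $X$ vanishes and the radical argument is vacuous, so the degeneracy conclusion should be stated as holding on the regular locus and then extended by continuity to all of $L$. I would phrase the final statement of (3) so that this subtlety is absorbed cleanly, noting that the vanishing of $(i^*\omega)^k$ is a closed condition and therefore propagates from the dense regular set.
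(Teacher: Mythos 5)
Parts (1) and (2) of your proposal are correct and follow exactly the paper's route: everything rests on the translation ``tangent to $\cF$'' $\Leftrightarrow$ ``$X\in TL^\omega$'' (the paper phrases (2) as $T_xL\supset (T_xL)^\omega\supset(\cF_x)^\omega=\R X(x)$, which is the same computation), and the core of your part (3) --- invariance plus tangency put $X|_L$ in the radical of $i^*\omega$ --- is precisely where the paper's own proof stops.

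The genuine problem is the patch you add in part (3). You assert that the points of $L$ where $X\neq 0$ form an open \emph{dense} subset of $L$; openness is clear, but density is unjustified and false in general, because the zero set of $X$ (i.e.\ of $\ud_\eta H$) can contain $L$ entirely. Concretely, view $S^2\times S^2$ with the split symplectic form $\omega=\omega_1\oplus\omega_2$ as a conformal symplectic manifold with $\eta=0$ (the paper allows this), and let $H(p,q)$ be the height of $q$. Then $L=S^2\times\{N\}$, with $N$ the north pole, consists entirely of fixed points, so it is invariant; and it is tangent to $\cF$, since $\cF_x=\ker(\ud_\eta H)_x=T_xM$ at every $x\in L$. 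Yet $i^*\omega=\omega_1$ is non-degenerate, so $L$ is a symplectic invariant surface tangent to $\cF$ --- the conclusion of (3) itself fails at such an $L$, and no continuity argument can rescue it. (Since $\eta$ is closed, hence locally exact, the same construction can be carried out locally on any conformal symplectic manifold, so this is not an artifact of $\eta=0$.) What actually survives, and is all the paper proves --- its final sentence ``$X|_L$ is in the kernel of $i^*\omega$'' is vacuous where $X=0$ --- is that $i^*\omega$ is degenerate at every point of $L$ where $X\neq 0$. To upgrade this to degeneracy everywhere, and hence to $(i^*\omega)^k=0$ and to isotropy of invariant surfaces, one needs the additional hypothesis that the zeros of $X$ are nowhere dense in $L$; under that hypothesis your closedness-of-degeneracy remark does apply verbatim. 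So your instinct that the critical locus is the delicate point was exactly right (it is a blind spot of the paper's statement as well), but the fix as written relies on a false claim rather than on an added hypothesis.
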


\begin{proof}
    If $L$ is an invariant isotropic submanifold,
    then $X$ is tangent to $L$ so
    $\forall v\in TL$, $\ud_\eta H\cdot v = \omega(X,v) = 0$.
    Conversely, if $L$ is coisotropic and tangent to $\cF$,
    for $x\in L$, $T_xL\supset (T_xL)^\omega \supset (\cF_x)^\omega
    = \R X(x)$ so $L$ is invariant.
    If $i:L\hookrightarrow M$ is invariant and tangent to $\cF$,
    then $X|_L$ is in the kernel of $i^*\omega$.
\end{proof}

Combining Proposition~\ref{prop:isotropicinv} and Corollary~\ref{cor:01law},
one gets the following result.

\begin{corollary}\label{cor:isotropicinvloop}
    An isotropic invariant submanifold on which the pull-back
    of the Lee form is not exact is included in
    $\{ H= 0\}$.
\end{corollary}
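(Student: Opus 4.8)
The plan is simply to chain the two results cited in the statement. First I would observe that an isotropic invariant submanifold $L$ falls exactly under the hypothesis of Proposition~\ref{prop:isotropicinv}(1): isotropy together with invariance forces the Hamiltonian vector field $X$ to be tangent to $L$, so that $\ud_\eta H\cdot v = \omega(X,v) = 0$ for every $v\in TL$, i.e.\ $L$ is tangent to the invariant distribution $\cF = \ker\ud_\eta H$. This is the step where both hypotheses are consumed, and it is the conceptual heart of the argument, although it has already been carried out in the referenced proposition.

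Once $L$ is known to be tangent to $\cF$, I would invoke the second assertion of Corollary~\ref{cor:01law}. That corollary rests on Lemma~\ref{lem:Hpath}: along any path $\gamma$ tangent to $\cF$ one has $H(\gamma(1)) = e^{\int_\gamma\eta}H(\gamma(0))$. Were $H$ nonzero at some point of $L$, evaluating this identity along loops based at that point would give $\int_\gamma\eta = 0$ for every loop $\gamma$ in $L$, i.e.\ the pull-back $i^*\eta$ would be exact, contrary to the hypothesis. Hence $H\equiv 0$ on $L$, that is, $L\subset\{H=0\}$.

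The only subtlety worth flagging is connectedness, since Corollary~\ref{cor:01law} and the underlying $0$--$1$ dichotomy are stated for connected submanifolds: for a disconnected $L$ one should apply the argument to a connected component on which the pulled-back Lee form fails to be exact, or simply take $L$ connected, as is implicit here. There is otherwise no genuine obstacle, as the result is a direct corollary and both inputs are already established.
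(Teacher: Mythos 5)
Your proposal is correct and is exactly the paper's argument: the authors prove this corollary precisely by combining Proposition~\ref{prop:isotropicinv}(1) (isotropic plus invariant implies tangent to $\cF$) with the second assertion of Corollary~\ref{cor:01law}. Your remark about connectedness is a legitimate reading of the implicit hypothesis and does not change the argument.
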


Corollary~\ref{cor:isotropicinvloop} can be applied to
Lagrangian graphs of $T^*_\beta Q$ for a non-exact closed 1-form
$\beta$ of $Q$.
Indeed, for every $\beta$-closed $1$-form $\alpha$ of $Q$, $q\mapsto \alpha_q$ defines
a Lagrangian section $Q\hookrightarrow T^*_\beta Q$
pulling back the Lee form to the non-exact form $\beta$.

We now are interested in how the dynamics can force the isotropy. \\
 Following \cite{Schwartzman}, we recall that a point $x\in M$ is {\em quasi-regular} if for every continuous map $f: M\to \R$, the following limit exists
 $$\lim_{t\to+\infty}\frac{1}{t}\int_0^tf(\varphi_s x)ds.$$
 Then we can associate to every quasi-regular point its {\em asymptotic cycle} $A(x)\in H_1(M, \R)$ that satisfies for every continuous closed 1-form $\nu$ on $M$
 $$\langle [\nu], A(x)\rangle =\lim_{t\to\infty}\frac{1}{t}\int_0^t\nu(X_H\circ \varphi_s(x))ds.$$
 Moreover, if $\mu$ is an invariant Borel probability by $(\varphi_t)$, $\mu$ almost every point is quasi-regular and the  asymptotic cycle $A(\mu)\in H_1(M, \R)$ of $\mu$ is defined by
 $$\langle [\nu], A(\mu)\rangle=\int \langle [\nu], A(x)\rangle d\mu(x).$$
 We have 
 \begin{proposition}\label{Pasymptrota}
 Let $(R_{t\alpha})_{t\in\R}$ the flow of rotations of $\T^n$ with vector $\alpha\in \R^n$ that is defined by 
 $$R_{t\alpha}(\theta)=\theta+t\alpha.$$
We identify $H_1(\T^n)$ with $\R^n$ in the usual way. Then every point of $\T^n$ is quasi-regular, and the asymptotic cycle of every point of $\T^n$ and of every invariant probability measure is $\alpha$.
 
 \end{proposition}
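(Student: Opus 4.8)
The plan is to treat the two assertions separately: first establish quasi-regularity of every point by a Fourier and density argument, and then observe that the asymptotic cycle becomes an immediate computation once one notes that the generating vector field of $(R_{t\alpha})$ is the constant field $X=\sum_i\alpha_i\partial_{\theta_i}$.

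For quasi-regularity, I would test the time averages on the characters $\chi_k\colon\theta\mapsto e^{2\pi i\langle k,\theta\rangle}$, $k\in\Z^n$. Since $R_{s\alpha}(\theta)=\theta+s\alpha$, one computes directly
\[
S_t\chi_k(\theta):=\frac1t\int_0^t\chi_k(\theta+s\alpha)\,\ud s
=\chi_k(\theta)\cdot\frac1t\int_0^t e^{2\pi i\langle k,\alpha\rangle s}\,\ud s,
\]
and the last factor tends to $1$ when $\langle k,\alpha\rangle=0$ and to $0$ otherwise. Hence the Cesàro average along the orbit of $\theta$ converges for every character, and by linearity for every trigonometric polynomial. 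To pass to an arbitrary continuous $f$, I would use that the averaging operators satisfy $|S_tf(\theta)|\le\|f\|_\infty$ and that trigonometric polynomials are dense in $C(\T^n)$: a standard $3\varepsilon$ argument then shows that $t\mapsto S_tf(\theta)$ is Cauchy as $t\to+\infty$ for every fixed $\theta$, proving that every point is quasi-regular.

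Once quasi-regularity is known, the asymptotic cycle is immediate. Because cohomologous closed $1$-forms yield the same pairing, it suffices to evaluate $A(\theta)$ on the basis $[\ud\theta_1],\dots,[\ud\theta_n]$ of $H^1(\T^n;\R)$ that is dual to the coordinate circles. As $\ud\theta_j$ is translation invariant and $X\equiv\alpha$, one has $\ud\theta_j(X)\equiv\alpha_j$, so
\[
\langle[\ud\theta_j],A(\theta)\rangle=\lim_{t\to\infty}\frac1t\int_0^t\ud\theta_j(X\circ R_{s\alpha}(\theta))\,\ud s=\alpha_j,
\]
which under the standard identification $H_1(\T^n;\R)\cong\R^n$ reads $A(\theta)=\alpha$ for every $\theta$. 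For an invariant probability measure $\mu$, every point (hence $\mu$-almost every point) is quasi-regular, and since $x\mapsto\langle[\nu],A(x)\rangle=\langle[\nu],\alpha\rangle$ is constant, $\langle[\nu],A(\mu)\rangle=\int\langle[\nu],A(x)\rangle\,\ud\mu=\langle[\nu],\alpha\rangle$; thus $A(\mu)=\alpha$ as well.

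The only genuine work is the quasi-regularity statement, and its one delicate feature is that $(R_{t\alpha})$ need not be uniquely ergodic: when the $\alpha_i$ satisfy rational relations the flow is not minimal and $\lim_tS_tf(\theta)$ really depends on $\theta$. This is why I would avoid invoking unique ergodicity and instead argue character by character; the resonant characters ($\langle k,\alpha\rangle=0$) are exactly the ones producing a $\theta$-dependent limit, but each average still converges, so no obstruction arises. The cycle computation, by contrast, is insensitive to these resonances, since it only pairs with closed $1$-forms, along which the relevant integrand is constant.
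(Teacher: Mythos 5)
Your proof is correct. For context: the paper itself gives no proof of Proposition~\ref{Pasymptrota} --- it is stated as a standard consequence of Schwartzman's theory of asymptotic cycles \cite{Schwartzman} --- so there is no argument to compare against line by line; yours is the natural self-contained verification. The character-by-character Ces\`aro computation plus the $3\varepsilon$ density argument correctly establishes quasi-regularity of every point, and you are right that this is the one place requiring care: you cannot invoke unique ergodicity, since for resonant $\alpha$ the flow is neither minimal nor uniquely ergodic and the limit genuinely depends on $\theta$. The cycle identification is then immediate because the generating field is constant, $\ud\theta_j(X)\equiv\alpha_j$, and pairing against the basis $[\ud\theta_1],\dots,[\ud\theta_n]$ of $H^1(\T^n;\R)$ determines $A(\theta)$ by non-degeneracy of the pairing. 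Two cosmetic points, neither a gap: the characters are complex-valued while quasi-regularity is stated for real continuous functions, so one should take real and imaginary parts (or note that real trigonometric polynomials are dense in $C(\T^n,\R)$); and the fact that the pairing depends only on the cohomology class of $\nu$ --- equivalently that exact forms average to zero, since $\frac{1}{t}\bigl(g(\varphi_t(x))-g(x)\bigr)\to 0$ for bounded $g$ --- is the ingredient of Schwartzman's theory that makes $A(x)$ well defined once quasi-regularity is known; you invoke it correctly, and within the paper's framework it is already granted.
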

 
 Observe that when two flows $(f_t):M\righttoleftarrow$ and $(g_t):N\righttoleftarrow$ are conjugated via some homeomorphism $h:M\to N$, then the quasi-regular points of $(g_t)$ are the $h$-images of the quasi-regular points of $(f_t)$ and that when $x\in M$ is quasi-regular, we have
 $$h_* A( x)= A(h(x)).$$
 This allows us to introduce a notion of {\em rotational torus} $\cT$ for a flow $(f_t):M\righttoleftarrow$. A rotational torus is a $C^0$-embedded torus $j:\T^m\hookrightarrow M$ such that $(j^{-1}\circ f_t\circ j)$ is a flow of rotation. When $j$ is a $C^1$-embedding, $\cT$ is a {$C^1$-rotational torus}. Thanks to Proposition \ref{Pasymptrota}, all the points of a rotational torus are quasi-regular with the same asymptotic cycle that we denote by $A(\cT)\in H_1(M)$ and every measure with support in $\cT$ has also the same asymptotic cycle.

Let us prove a result that is reminiscent of a result of Herman in the symplectic setting \cite{Herman1989}.

\begin{proposition}\label{propositionherman}Assume that
$j(\T^m)=\cT$ is a $C^1$-rotational torus
for a conformal Hamiltonian flow $(\varphi_t)$ of $(M,\eta,\omega)$.
Then 
\begin{itemize}
    \item if the flow 
     restricted to $\cT$ is minimal, $\omega=\ud_\eta\lambda$ is $\eta$-exact and $j^*\eta$ is exact, then $\cT$ 
    is isotropic;
    \item if 
  $\cT$ is not isotropic, then 
    $\langle [\eta], A(\cT)\rangle=0$.
\end{itemize}
In particular, when the cohomological class of $\eta$ is rational and non zero and when the flow 
 restricted to $\cT$ is minimal then  $\cT$ 
is isotropic.
\end{proposition}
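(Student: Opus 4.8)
The plan is to transport the whole situation to the model torus through the $C^1$ conjugacy and reduce the statement to an elementary analysis of a pulled-back conformal relation. Write $\bar\eta:=j^*\eta$ and $\bar\omega:=j^*\omega$, two continuous forms on $\T^m$, and recall that $\cT$ is isotropic exactly when $\bar\omega\equiv0$. The conjugacy reads $\varphi_t\circ j=j\circ R_{t\alpha}$, so differentiating $\varphi_s(j(\theta))=j(\theta+s\alpha)$ in $s$ gives $X\circ j(\theta+s\alpha)=\ud j_{\theta+s\alpha}(\alpha)$. Pulling back $\varphi_t^*\omega=e^{r_t}\omega$ from Lemma~\ref{lem:rt} therefore yields, with $\bar r_t(\theta):=r_t(j(\theta))$,
\[
R_{t\alpha}^*\bar\omega=e^{\bar r_t}\bar\omega,\qquad \bar r_t(\theta)=\int_0^t\bar\eta_{\theta+s\alpha}(\alpha)\,\ud s.
\]
Since $R_{t\alpha}$ is a translation, evaluating the first identity on fixed vectors $u,v\in\R^m$ gives the scalar relation $\bar\omega_{\theta+t\alpha}(u,v)=e^{\bar r_t(\theta)}\bar\omega_\theta(u,v)$. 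Finally, Proposition~\ref{Pasymptrota} shows every point of $\T^m$ is quasi-regular with asymptotic cycle $\alpha$, so $\tfrac1t\bar r_t(\theta)\to\langle[\bar\eta],\alpha\rangle=\langle[\eta],A(\cT)\rangle$ (using $A(\cT)=j_*\alpha$ and $\langle[\eta],j_*\alpha\rangle=\langle j^*[\eta],\alpha\rangle$).

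For the second bullet I would argue by contraposition, and this is the robust part. Assume $\langle[\eta],A(\cT)\rangle\neq0$; then $\bar r_t(\theta)/t$ tends to a nonzero limit, so in a suitable time direction $\bar r_t(\theta)\to+\infty$ and $e^{\bar r_t(\theta)}\to+\infty$. If $\bar\omega_\theta(u,v)\neq0$ for some $\theta,u,v$, the scalar relation forces $|\bar\omega_{\theta+t\alpha}(u,v)|\to+\infty$, contradicting the boundedness of the continuous function $\theta\mapsto\bar\omega_\theta(u,v)$ on the compact torus. Hence $\bar\omega\equiv0$, i.e. $\cT$ is isotropic. Note this step uses neither minimality nor any exactness.

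The first bullet is where the real work lies. Using that $j^*\eta=\bar\eta$ is exact, write $\bar\eta=\ud\bar\theta$; then $\bar r_t=\bar\theta\circ R_{t\alpha}-\bar\theta$, and the scalar relation upgrades to $R_{t\alpha}^*(e^{-\bar\theta}\bar\omega)=e^{-\bar\theta}\bar\omega$, so $\Omega:=e^{-\bar\theta}\bar\omega$ is an invariant $2$-form. Minimality makes each $\theta\mapsto\Omega_\theta(u,v)$ invariant under the dense subgroup $\{t\alpha\}$, hence constant, so $\Omega=\sum_{i<j}c_{ij}\,\ud x_i\wedge\ud x_j$ has constant coefficients. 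On the other hand, pulling back $\omega=\ud_\eta\lambda$ gives $\bar\omega=\ud_{\bar\eta}\bar\lambda$ with $\bar\lambda:=j^*\lambda$, and the identity $\ud_{\ud f}\beta=e^{f}\ud(e^{-f}\beta)$ with $f=\bar\theta$ gives $\Omega=\ud(e^{-\bar\theta}\bar\lambda)$, an exact form. Since the $\ud x_i\wedge\ud x_j$ form a basis of $H^2(\T^m;\R)$, an exact constant-coefficient form must vanish; thus $\Omega=0$ and $\bar\omega=0$. The main obstacle is the low regularity: $j$ being only $C^1$, $\bar\lambda$ is merely continuous, so $\Omega=\ud(e^{-\bar\theta}\bar\lambda)$ only makes sense in the sense of currents/de~Rham cohomology. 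I would phrase the conclusion as the vanishing of the appropriate cohomology class of $\Omega$ (exactness being a cohomological statement) combined with the minimality-produced constancy, so that no pointwise differentiation of $\bar\lambda$ is needed.

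For the closing ``in particular'', suppose $\cT$ were not isotropic. By the second bullet $\langle[\eta],A(\cT)\rangle=\langle[\bar\eta],\alpha\rangle=0$. As $[\eta]$ is rational, $[\bar\eta]=j^*[\eta]$ is a rational vector of $H^1(\T^m;\R)\cong\R^m$, while minimality of $R_{t\alpha}$ forces $\alpha$ to be rationally independent; hence $\langle[\bar\eta],\alpha\rangle=0$ is only possible if $[\bar\eta]=0$, i.e. $j^*\eta$ is exact. That places us in the regime of the first bullet, whose conclusion (isotropy of $\cT$) contradicts the assumption. Equivalently, when $j^*[\eta]\neq0$ the nonzero rational pairing already gives isotropy through the second bullet, and only the degenerate case $j^*[\eta]=0$ appeals to the first bullet.
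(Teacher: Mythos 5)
Your proof of the first bullet is essentially the paper's: you form the invariant $2$-form $e^{-\bar\theta}j^*\omega$ (the paper's $e^{-f}j^*\omega$), use minimality to make its coefficients constant, and kill it by the exactness coming from $\omega=\ud_\eta\lambda$ together with $j^*\eta=\ud\bar\theta$. The one place you go beyond the paper is the regularity remark: since $j$ is only $C^1$, $j^*\lambda$ is merely continuous, so the identity $\ud(e^{-\bar\theta}j^*\lambda)=e^{-\bar\theta}j^*\omega$ must be read distributionally; the paper performs this differentiation without comment, and your cohomological/currents reading is a legitimate patch (mollify $e^{-\bar\theta}j^*\lambda$ on $\T^m$ and pass to the limit in Stokes to see that a constant-coefficient form that is exact as a current vanishes). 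For the second bullet your route is genuinely different and more elementary. The paper picks $t_n\to+\infty$ with $t_n\alpha\to 0$ in $\T^m$, uses that $R_{t_n\alpha}\to\id$ in the $C^1$ topology to get $R_{t_n\alpha}^*(j^*\omega)\to j^*\omega$, and since $r_{t_n}\circ j=t_n\langle[j^*\eta],\alpha\rangle+o(1)$ this forces $\langle[\eta],A(\cT)\rangle=0$. You instead play the scalar relation $\bar\omega_{\theta+t\alpha}(u,v)=e^{\bar r_t(\theta)}\bar\omega_\theta(u,v)$ against compactness: a nonzero pairing makes $e^{\bar r_t}$ blow up in one time direction, contradicting boundedness of the continuous form $j^*\omega$. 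Both are correct; yours avoids the recurrence sequence and makes transparent that neither minimality nor any exactness enters this bullet, while the paper's version only needs one-sided information (your ``suitable time direction'' does require the two-sided estimate $\bar r_t=t\langle[j^*\eta],\alpha\rangle+O(1)$, which you indeed have uniformly, or the cocycle identity $r_{-t}\circ\varphi_t=-r_t$).

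One caveat on the closing ``in particular'': your case analysis is the natural reading, but observe that in the degenerate case $j^*[\eta]=0$ you must invoke the first bullet, which carries the extra hypothesis that $\omega$ is $\eta$-exact --- a hypothesis not present in the ``in particular'' as stated. The paper's own proof covers only the two bullets and is silent on this implication, so the gap sits in the statement rather than in your argument; when $j^*[\eta]\neq 0$, your second-bullet route alone does give isotropy with no exactness assumption. It would be worth saying explicitly that the degenerate case needs $\omega=\ud_\eta\lambda$ (or some substitute), rather than leaving the reliance implicit.
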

\begin{proof}
 We use the notation $R_{t\alpha}=j^{-1}\circ \varphi_t\circ j$.\\
Let us prove the first point. As $j^*\eta=\ud f$ is exact, we have 
$$\ud(e^{-f}j^*\lambda)=e^{-f}(j^*\ud\lambda-\ud f\wedge j^*\lambda)=e^{-f}j^*(\ud_\eta\lambda)=e^{-f}j^*\omega.$$
Hence $e^{-f}j^*\omega$ is exact. Observe that $j^*X_H=\alpha$. Hence
$\forall x\in\T^m$, $\forall t\in\R$,
$$r_t(j(x))=\int_0^t\eta(\varphi_s(j(x)))X_H(\varphi_s(j(x)))\ud s=\int_0^t\ud f(R_{s\alpha}(x))\alpha \ud s=f(x+t\alpha)-f(x).
$$
Because $(\varphi_t^H)^*\omega=e^{r_t}\omega$ and $\varphi_t^H\circ j=j\circ R_{t\alpha}$, we deduce
$$R_{t\alpha}^*(j^*\omega)=e^{r_t\circ j}j^*\omega
$$
and then 
$$R_{t\alpha}^*(e^{-f}j^*\omega)=e^{-f}j^*\omega.
$$
If we write $e^{-f}j^*\omega=\sum_{1\leq i<j\leq m}a_{i, j}\ud x_i\wedge \ud x_j$, we deduce that every continuous function $a_{i, j}$ is invariant by $(R_{t\alpha})$, and then constant because the flow is minimal. The form $e^{-f}j^*\omega$ is constant and exact, it is then the zero form and $\cT$  
is isotropic.

    Let us prove the second point. We assume that $\cT$ 
    is not isotropic. Hence $j^*\omega$ is not zero. There exists a sequence $(t_n)$ of real numbers that tends to $+\infty$ and satisfies 
    $$\lim_{n\to\infty}t_n\alpha=0\quad{\rm in}\quad \T^m.$$
    Then $(R_{t_n\alpha})$ tends to $\id_{\T^m}$ in topology $C^1$. We deduce that $(R_{t_n\alpha}^*(j^*\omega))$ tends to $j^*\omega$.
    Moreover, we also have
    $$R_{t_n\alpha}^*(j^*\omega)=e^{r_{t_n}\circ j}j^*\omega $$
    where, as $\varphi_s\circ j = j\circ R_{s\alpha}$
    (so $X\circ j = \ud j\cdot\alpha$),
    $$r_{t_n}\circ j(x)=\int_0^{t_n}\eta(\ud j(R_{s\alpha}(x))\alpha)\ud s
    =t_n\langle [j^*\eta],\alpha\rangle +o(1).$$
    As $j^*\omega$ is not the zero form, we have then 
   $ 0=\lim_{n\to\infty}r_{t_n}(x)$ which implies that $[j^*\eta]$ is orthogonal to $\alpha$   i.e.  
    $\langle [\eta], A(\cT)\rangle=0$.
\end{proof}
If we assume that the invariant torus is $C^3$, we can relax the hypothesis on the dynamics for the second point of Proposition \ref{propositionherman}, asking only a $C^0$-conjugacy. The main argument that we use is very similar to part 3 of \cite{ArnaudFejoz}.\\

\begin{proposition}\label{PropositionYomdin}
Assume that $\cT\subset M$ is a  
$C^3$-submanifold of
$M$   that is a $C^0$-rotational torus 
of a conformal Hamiltonian flow $(\varphi_t)$ of $(M,\eta,\omega)$ such that $\langle [\eta],
A(\cT)\rangle\neq0$.
Then $\cT$ is isotropic. 

In particular, when the cohomological class of $\eta$ is rational and non zero and when the flow   restricted to $\cT$ is minimal,  
then $\cT$ is isotropic.
\end{proposition}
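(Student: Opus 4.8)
The plan is to reduce the statement to a single $C^3$ diffeomorphism of the torus and to play the conformal scaling of $\omega$ against \emph{Yomdin's theorem} on volume growth versus topological entropy, the extra regularity of $\cT$ compensating for the loss of the $C^1$-conjugacy used in Proposition~\ref{propositionherman}. Reversing time if necessary (this turns the conformal Hamiltonian flow of $H$ into that of $-H$ and changes the sign of the pairing), I may assume $c := \langle[\eta],A(\cT)\rangle < 0$. I set $g := \varphi_1|_\cT$, which is a $C^3$ diffeomorphism of the $C^3$ torus $\cT$ (the restriction of the smooth $\varphi_1$ to the invariant $C^3$ submanifold), topologically conjugate through the homeomorphism $j$ to the rotation $R_\alpha$. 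Hence $g$ preserves the full-support Borel probability $\mu := j_*\mathrm{Leb}$ and $h_{\mathrm{top}}(g) = h_{\mathrm{top}}(g^{-1}) = 0$. Writing $\omega_\cT := i^*\omega$ for the continuous (indeed $C^2$) pull-back of $\omega$ to $\cT$ and fixing a Riemannian metric on $\cT$, Lemma~\ref{lem:rt} gives $(g^{-n})^*\omega_\cT = e^{\sigma_n}\omega_\cT$ with $\sigma_n = r_{-n}|_\cT$; since all points of a rotational torus are quasi-regular in both time directions with the same asymptotic cycle $A(\cT)$, one has $\tfrac1n\sigma_n(x) \to |c| > 0$ for every $x\in\cT$.

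The core estimate is as follows. Assuming for contradiction that $\cT$ is not isotropic, $\omega_\cT\not\equiv 0$, and since $\mu$ has full support and $|\omega_\cT|$ is continuous, $C_1 := \int_\cT |\omega_\cT|\,\ud\mu > 0$. Pulling back a $2$-form multiplies its norm by at most the square of the operator norm of the differential, so pointwise $e^{\sigma_n}|\omega_\cT| = |(g^{-n})^*\omega_\cT| \le \|Dg^{-n}\|^2\,|\omega_\cT\circ g^{-n}|$. Integrating against $\mu$ and using its $g$-invariance yields $\int_\cT e^{\sigma_n}|\omega_\cT|\,\ud\mu \le \|Dg^{-n}\|_{C^0}^2\,C_1$. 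On the other hand, since $\tfrac1n\sigma_n\to|c|$ everywhere, Egorov's theorem furnishes an $N$ and a set $E$ with $\int_E|\omega_\cT|\,\ud\mu \ge C_1/2$ and $\sigma_n \ge \tfrac{|c|}2 n$ on $E$ for $n\ge N$, whence $\int_\cT e^{\sigma_n}|\omega_\cT|\,\ud\mu \ge \tfrac{C_1}2 e^{|c|n/2}$. Combining the two bounds gives $\|Dg^{-n}\|_{C^0}^2 \ge \tfrac12 e^{|c|n/2}$, that is $\limsup_n \tfrac1n\log\|Dg^{-n}\|_{C^0} \ge |c|/4 > 0$.

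The main obstacle, and the only place where the $C^3$ hypothesis enters, is to contradict this exponential growth; this is precisely what Yomdin's theorem provides. For a $C^r$ map the length-growth rate $v_1$ satisfies $v_1 \le h_{\mathrm{top}} + \tfrac2r R$ with $R := \limsup_n \tfrac1n\log\|Df^n\|_{C^0}$ and $v_1 = R$, so applied to $g^{-1}$, which is $C^3$ (hence $r = 3 > 2$) with $h_{\mathrm{top}}(g^{-1}) = 0$, it forces $R(g^{-1}) = 0$, contradicting the estimate above; therefore $\omega_\cT\equiv 0$ and $\cT$ is isotropic. The \emph{in particular} then follows from the main assertion exactly as in Proposition~\ref{propositionherman}: minimality of the flow on $\cT$ forces the rotation vector $\alpha$ to have rationally independent coordinates, so it pairs non-trivially with the nonzero rational class $j^*[\eta]$, giving $\langle[\eta],A(\cT)\rangle = \langle j^*[\eta],\alpha\rangle \neq 0$. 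I expect the two delicate points to be the rigorous identification of the backward time average of $\eta(X)$ with $|c|$ on a merely $C^0$-conjugate torus (handled through the symmetry of asymptotic cycles of rotation flows) and the precise form of Yomdin's inequality guaranteeing $R(g^{-1}) = 0$ from $C^3$ regularity and vanishing entropy alone.
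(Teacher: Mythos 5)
Your reduction to the time-one map $g=\varphi_1|_\cT$ and the integral estimate leading to $\limsup_n\frac1n\log\|Dg^{-n}\|_{C^0}\ge |c|/4>0$ are correct (in fact the convergence $\sigma_n/n\to|c|$ is \emph{uniform}, since Birkhoff averages of continuous functions along rotation orbits converge uniformly and $j$ is a homeomorphism, so Egorov is not needed). The fatal step is the last one: you invoke ``Yomdin's theorem'' in the form $v_1\le h_{\mathrm{top}}+\frac{2}{r}R$ \emph{together with} the identity $v_1=R$. Only the trivial half $v_1\le R$ of that identity is true; the half you actually use, $R\le v_1$, is false, and consequently no version of Yomdin's theory yields $R(g^{-1})=0$ from $C^r$ regularity and $h_{\mathrm{top}}(g^{-1})=0$. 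Zero topological entropy simply does not constrain derivative growth: any $C^\infty$ zero-entropy diffeomorphism with a hyperbolic fixed point (a north--south map of $\ES^2$, a Morse--Smale diffeomorphism of $\T^m$) has $R>0$, while Yomdin's genuine inequality forces $v_1\le h_{\mathrm{top}}=0$ for it --- such maps are themselves counterexamples to $R\le v_1$. Hence the exponential growth of $\|Dg^{-n}\|_{C^0}$ you derived contradicts nothing, and the proof does not close. Your own final caveat --- that the delicate point is ``the precise form of Yomdin's inequality guaranteeing $R(g^{-1})=0$'' --- is exactly where it breaks, and not for lack of precision: the needed implication is false.

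The failure is structural, not repairable by a better citation, because Yomdin bounds volume growth by entropy \emph{plus} a derivative-growth error, so one needs a lower bound on volume growth, not on $R$. Your estimate can indeed be upgraded in that direction: taking a fixed small disc $S\subset\cT$ on which $\omega_\cT$ is non-degenerate, $\int_{g^{-n}(S)}\omega_\cT=\int_S e^{\sigma_n}\omega_\cT\ge e^{n|c|/2}\int_S\omega_\cT$ gives $v_2(g^{-1})\ge |c|/2>0$. For a $C^\infty$ torus this contradicts $v_2\le h_{\mathrm{top}}=0$ and the argument closes; but for $C^3$ Yomdin only gives $v_2\le h_{\mathrm{top}}+\frac{4}{3}R$ (the error $\frac{2j}{r}R$ with $j=2$, $r=3$), which is vacuous here since $R$ may well be positive. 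This is precisely why the paper does not use Yomdin's general theorem: its proof (following Arnaud--Fejoz) exploits the invariant characteristic foliation $\cG$ of $j^*\omega$. The exponentially growing $(j^*\omega)$-area of a transversal disc must be distributed among pieces, each crossing every leaf of a foliation chart at most once and hence of uniformly bounded area; therefore the number of realized itineraries through the foliation boxes grows exponentially, which bounds $h_{\mathrm{top}}(\varphi_t|_\cT)$ from below and contradicts the $C^0$-conjugacy to a rotation. It is this invariant foliation --- not general smoothness-versus-entropy estimates --- that converts area growth into entropy and lets $C^3$ regularity suffice.
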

\begin{proof}[Proof of Proposition \ref{PropositionYomdin}]
    We denote the
    canonical injection $\cT\hookrightarrow M$ by $j$. We endow $\cT$ with a
    Riemannian metric and
    denote by $d_\cG$ the distance along the leaves of the characteristic
    foliation $\cG$ of $j^*\omega.$ We  assume that $\cT$ is not isotropic. We
    denote the maximum rank of $j^*\omega$ by $r$ and by $U$ the open set 
$$
U=\{ x\in \cT\ |\  \rank (j^*\omega(x))=r\}
$$
As $\varphi_t^*(j^*\omega)=e^{r_t}j^*\omega$, this set is invariant by the flow. \\
A result of   Proposition \ref{Pasymptrota} 
is
\begin{equation}\label{ERN} r_{n}(x)=n\left(\langle [\eta],
  A(\cT)\rangle+o_{n\to\infty}(1)\right),\quad
\forall x\in\cT.
\end{equation}

There are two cases:
\begin{itemize}
    \item either $U=\cT$ is compact; we choose $x\in U$ and $\cK:=U$;
    \item or $U\neq\cT$. Then the closure of the orbit of a fixed point $x\in
        U$ is homeomorphic to a torus with dimension $k<m$. As
        $(\varphi_t|_\cT)$ is conjugate to a flow of rotation, there exists
        a compact invariant neighbourhood $\cK$ of $x$ in $U$ that is
        homeomorphic to $\T^k\times [-1, 1]^{m-k}$. 
\end{itemize}
In $\cK$, we consider the characteristic foliation $\cG$ of $\omega$.   
We now follow the arguments and notation of \cite{ArnaudFejoz}
(except that $\cF$ and the $\cF_i$'s are here denoted
$\cG$ and $\cG_i$).
We use a
finite covering of $\cK$ by foliated charts $\cW_1, \dots, \cW_I$ in $\cU$ and
denote by $\cG_i$ the foliation restricted to $\cW_i$. Then there exists a
constant $\mu>0$ such that every $(m-r)$-submanifold $\cS$ of $\cW_i$ that
intersects every leaf of $\cG_i$ at most once satisfies
$|\omega^\frac{r}{2}(\cS)|\leq \mu$. \\
Moreover, we may assume that there exists $\varepsilon>0$ such that:
\begin{enumerate}[(i)]
    \item\label{it:1}
if $x, y$ are in some $\cW_i$,  and such  that $d_\cG(x, y)<\varepsilon$,
then $x$ and $y$ are in the same leaf of $\cW_i$
\end{enumerate}
  where $d_\cG$ is the distance along the leaves.\\
We also have the existence of $\nu\in (0, \varepsilon)$ such that 
\begin{equation}\label{Edistfeuille}
    d_\cG(x,y)<\nu\Rightarrow d_\cG(\varphi_{-1}(x),
\varphi_{-1}(y))<\varepsilon,\quad
\forall x,y\in\cK.
\end{equation}

We then use a decomposition $(Q_j)_{1\leq j\leq J}$ of $\cK$ into submanifolds
with corners that may intersect only along their boundary such that every $Q_j$
is contained in at least one $\cW_i$ that satisfies:
\begin{enumerate}[(i)]
    \setcounter{enumi}{1}
    \item\label{it:2}
 if $Q_j\subset \cW_i$, then if $x, y\in Q_j$ are in the same leaf of $\cW_i$,
 we have $d_{\cG}(x, y)<\nu$.
\end{enumerate}
 If $\cS$ is a piece of $r$-dimensional submanifold   contained in  some $Q_{j_0}\subset
 \cW_{i_0}$ that is transverse to $\cG$ and intersects every leaf of
 $\cG_{i_0}$ at most once, let us consider $\cS'=\varphi_1(\cS\cap
 Q_{j_0})\cap Q_{j_1}$ for some $j_1$. Then $\cS'$ is also transverse to $\cG$.
 Let $\cW_{i_1}$ that contains $Q_{j_1}$ and let us assume that $x, y\in\cS'$
 are in a same leaf of $\cG_{i_1}$. Because of (\ref{it:2}) and
 \eqref{Edistfeuille},
 $d_\cG(\varphi_{-1}(x), \varphi_{-1}(y))<\varepsilon$ and by (\ref{it:1}), we
 have $\varphi_{-1}(x)=\varphi_{-1}(y)$ and $x=y$. Iterating this
 argument, we deduce that all the sets 
 $$\cS'=\varphi_k(\cS\cap Q_{j_0})\cap \varphi_{k-1}(Q_{j_1})\cap \dots \cap Q_{j_k}$$
 are such that if $Q_{j_k}\subset \cW_{i_k}$, $\cS'$ intersects every leaf of $\cF_{i_k}$ at most once and then $|\omega^\frac{r}{2}(\cS')|\leq\mu$. \\
  If now $N_k$ is the number of $k$-uples $(j_1, \dots, j_k)$ such that
  $\varphi_k(\cS\cap Q_{j_0})\cap \varphi_{k-1}(Q_{j_1})\cap \dots \cap
  Q_{j_k}\neq\emptyset$, then we have
  \begin{equation}\label{Einegomega}|\omega^\frac{r}{2}(\varphi_{k}(\cS))|\leq N_k\mu.\end{equation}
 By \eqref{ERN}, we have 
 \begin{equation}\label{Ecroissanceomega}
   \omega^\frac{r}{2}(\varphi_{k}(\cS))=\exp\Big(k  \frac{r}{2}\big(\langle [\eta],  A(\cT)\rangle+o_{k\to\infty}(1)\big)\Big)\omega^\frac{r}{2}(\cS).
\end{equation}
Combining \eqref{Einegomega} and \eqref{Ecroissanceomega}, we deduce that 
$$\limsup_{k\to\infty}\frac{1}{k}\log(N_k)\geq |\langle [\eta],   A(\cT)\rangle|>0$$
is a lower bound for the topological entropy of $(\varphi_t|_\cT)$. But this
contradicts the fact that $(\varphi_t|_\cT)$ is $C^0$-conjugate to a flow of
rotation and has zero entropy.
\end{proof}

\appendix

\section{Isotropic submanifolds}
\label{se:isotropic}

\subsection{Isotropic embeddings}

\begin{lemma}\label{lem:legendrianEmb}
    Given a manifold $N$ endowed with a closed $1$-form $\beta$,
    there exists a Legendrian embedding of $N$ in a contact manifold
    $(V,\alpha)$ endowed with a closed $1$-form, the
    pull-back to $N$ of which is $\beta$.
    This contact manifold is closed if $N$ is closed.
\end{lemma}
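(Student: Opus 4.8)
The plan is to realize $N$ as a conormal-type Legendrian inside the manifold of (unit) contact elements of an auxiliary space, using an extra circle factor to guarantee compactness. The naive model is the $1$-jet space $J^1N = T^*N\times\R$ with its contact form $\ud z-\lambda$: the zero section is Legendrian and the pull-back $\pi^*\beta$ of $\beta$ is a closed $1$-form restricting to $\beta$ there. This already solves everything \emph{except} compactness, so the whole point will be to close up both the $\R$-direction and the cotangent fibres without destroying the contact condition.

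First I would set $Q := N\times S^1$ and let $V := S^*Q$ be the unit cotangent bundle for some auxiliary Riemannian metric on $Q$, equipped with the contact form $\alpha := i^*\lambda$, where $\lambda$ is the Liouville form of $T^*Q$ and $i:S^*Q\hookrightarrow T^*Q$ the inclusion. This is exactly the standard contact structure already used in Section~\ref{ssLegendreattractors}. Since $\dim Q = \dim N + 1$, we get $\dim V = 2\dim N + 1$, the correct dimension for $N$ to be Legendrian, and $V$ is closed as soon as $Q$, hence $N$, is closed.

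Next I would consider the closed hypersurface $S := N\times\{*\}\subset Q$. Being two-sided, it admits a coorientation, and I take $\Lambda\subset V$ to be the corresponding unit conormal lift: the set of unit covectors based on points of $S$ that annihilate $TS$ and point to the chosen side. The bundle projection $\pi:V\to Q$ then identifies $\Lambda$ diffeomorphically with $S\cong N$; call this embedding $j:N\hookrightarrow V$. That $\Lambda$ is Legendrian is the standard computation: for $\xi$ tangent to $\Lambda$ one has $\ud\pi(\xi)\in TS$ while the underlying covector annihilates $TS$, whence $\alpha(\xi)=\lambda(\xi)=0$, and $\dim\Lambda=\dim N$ is maximal. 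Finally, letting $\mathrm{pr}:Q=N\times S^1\to N$ be the projection, I set $\gamma := (\mathrm{pr}\circ\pi)^*\beta$, a closed $1$-form on $V$; since $\mathrm{pr}\circ\pi\circ j$ is the identity of $N$ under the above identification, $j^*\gamma=\beta$, as required.

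I expect the only genuinely delicate point to be the compactness just discussed, which is resolved precisely by replacing $\R$ with $S^1$ and passing to the compact spherization $S^*Q$ rather than working in $J^1N$ or $T^*N$. A secondary point to verify carefully is that the coorientation of $S$ is what reduces the conormal fibre from $S^0$ to a single point, so that $\Lambda$ is one copy of $N$ and not a double cover; this uses that $N\times\{*\}$ is two-sided in $N\times S^1$.
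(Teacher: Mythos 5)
Your proof is correct and is essentially the paper's own argument: the paper also takes $Q=N\times S^1$ with a product metric, uses the unit tangent bundle $T^1(N\times S^1)$ with its standard contact form (metrically identified with your $S^*Q$), realizes $N$ as (one component of) the unit (co)normal lift of the hypersurface $N\times\{*\}$, and pulls $\beta$ back through the projection to $N$. The only cosmetic differences are that you work on the cotangent side and use a coorientation to select one copy of $N$, where the paper simply notes that the unit normal bundle is a disjoint union of two copies and takes one.
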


\begin{proof}
    Given a submanifold $M'$ of
    a Riemannian manifold $(M,g)$, we denote $\nu^1M'\subset T^1M$
    its unit normal bundle.
    Let us endow $N$ and $S^1$ with Riemannian metrics
    and let us consider unit tangent bundle $V$ of
    the product Riemannian manifold
    $(N\times S^1,g)$ endowed with its standard contact form.
    Let us recall that unit normal bundles of submanifolds of
    $N\times S^1$ are Legendrian submanifolds of $V$.
    Therefore, $\nu^1(N\times\{ x\})$, $x\in S^1$ fixed,
    is a Legendrian submanifold, it is the disjoint union of two copies
    of $N$.
    We lift the closed $1$-form $\beta$ to $V$ by pulling it back
    by the canonical projection $T^1(N\times S^1)\to N$.
\end{proof}

\begin{lemma}\label{lem:lagrangianEmb}
    Given a manifold $N$ endowed with a closed $1$-form $\beta$,
    there exists a Lagrangian embedding of $N\times S^1$
    in a conformal exact symplectic manifold $(M,\eta,\omega)$
    such that the pull-back of $\eta$ to $N\times S^1$
    is $\pi^*\beta-\ud\theta$ where
    $\pi$ is the projection on the first factor.
    This manifold $M$ is closed if $N$ is closed.
\end{lemma}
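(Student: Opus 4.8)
The plan is to build $M$ as a twisted conformal symplectization of the contact manifold furnished by Lemma~\ref{lem:legendrianEmb}, and to realize $N\times S^1$ as the isotropic lift of the Legendrian $N$, so that the desired Lagrangian embedding and the prescribed restriction of the Lee form come essentially for free.

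First I would apply Lemma~\ref{lem:legendrianEmb} to the pair $(N,\beta)$: this yields a Legendrian embedding $i:N\hookrightarrow (V,\alpha)$ together with a closed $1$-form $\beta'$ of $V$ whose pull-back satisfies $i^*\beta'=\beta$, and $V$ is closed whenever $N$ is closed. I then set $M:=\Sconf_{\beta'}(V,\alpha)=V\times S^1$, equipped with its standard structure $(\eta,\omega)=(\pi_V^*\beta'-\ud\theta,\,-\ud_\eta(\pi_V^*\alpha))$, where $\pi_V:V\times S^1\to V$ denotes the projection. Since $\omega=\ud_\eta(-\pi_V^*\alpha)$ is $\eta$-exact by the very definition of the twisted symplectization, $(M,\eta,\omega)$ is conformal exact symplectic, and it is closed when $N$ (hence $V$) is closed.

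Next I would consider the embedding $j:=i\times\id:N\times S^1\hookrightarrow V\times S^1=M$ and compute the two relevant pull-backs. On the one hand, $\pi_V\circ j$ factors as $i\circ\pi$, where $\pi:N\times S^1\to N$ is the projection, so $j^*(\pi_V^*\beta')=\pi^*(i^*\beta')=\pi^*\beta$ while $j^*\ud\theta=\ud\theta$; hence $j^*\eta=\pi^*\beta-\ud\theta$, as required. On the other hand, using again $\pi_V\circ j=i\circ\pi$ together with the fact that $i$ is Legendrian, hence isotropic, one gets $j^*(\pi_V^*\alpha)=\pi^*(i^*\alpha)=0$, whence
\[
    j^*\omega=-\,\ud\big(j^*\pi_V^*\alpha\big)+(j^*\eta)\wedge\big(j^*\pi_V^*\alpha\big)=0.
\]
Thus $N\times S^1$ is isotropic. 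Since $\dim N=n$ forces $\dim V=2n+1$ and $\dim M=2n+2$, the submanifold $N\times S^1$ has dimension $n+1=\tfrac12\dim M$, so being isotropic it is in fact Lagrangian, which completes the argument.

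I do not expect a genuine obstacle here: once Lemma~\ref{lem:legendrianEmb} supplies the Legendrian model and the closed $1$-form $\beta'$ with $i^*\beta'=\beta$, the statement reduces to the two routine pull-back computations above, the isotropy being an immediate consequence of $i^*\alpha=0$. The only points deserving a little care are bookkeeping ones: to record that the $\eta$-exactness of $\omega$ is built into the definition of the twisted symplectization, so that $M$ is genuinely conformal \emph{exact} symplectic, and to note that the $S^1$-coordinate $\theta$ on $M$ restricts to the $S^1$-coordinate on $N\times S^1$, which is exactly what produces the $-\ud\theta$ term in $j^*\eta$.
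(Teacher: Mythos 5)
Your proposal is correct and follows essentially the same route as the paper: apply Lemma~\ref{lem:legendrianEmb} to get a Legendrian embedding of $N$ with a closed $1$-form pulling back to $\beta$, then take the twisted conformal symplectization and the isotropic lift $N\times S^1$. The paper states this in three lines; you have simply filled in the pull-back computations and the dimension count that make the lift Lagrangian, all of which are accurate.
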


\begin{proof}
    Let $(V,\alpha)$ be the contact manifold of
    Lemma~\ref{lem:legendrianEmb}, we assume $N\subset V$
    and identify $\beta$ with its pull-back to $V$.
    Then the $\beta$-twisted symplectization $(M,\eta,\omega)$
    of $V$ endowed with its standard Lee form satisfies
    the statement.
\end{proof}

\subsection{Weinstein neighborhood of
isotropic submanifolds}
\label{se:weinstein}

We extend the usual Weinstein neighborhood theorem
\cite[Lecture~5]{Weinstein1977}
to the conformal setting
following and adapting \cite[Section~2.5.2]{Geiges2008}.
The special case of Lagrangian submanifolds was
already treated by Chantraine-Murphy \cite[Theorem~2.11]{ChantraineMurphy2019}.

Let us first describe the conformal structure of the
local model of a neighborhood of an isotropic manifold $Q^k\subset
(M^{2n},\eta,\omega)$.
Let us denote $T_Q M$ the restriction of the tangent bundle of $M$ to $Q$
and $TQ^\omega\subset T_Q M$ the $\omega$-orthogonal bundle of $TQ$.
Then the normal bundle $\pi:\nu Q\to Q$ can be non-canonically decomposed as
\begin{equation}\label{eq:decnuQ}
    \nu Q = T_Q M/TQ \simeq TQ^\omega/TQ \oplus T_Q M/TQ^\omega.
\end{equation}
In order to fix this decomposition, let us fix a complex
structure $J$ compatible with $\omega$,
\emph{i.e.} such that $g:=\omega(\cdot,J\cdot)$ defines a Riemannian
metric.
With respect to $g$, $\nu Q$ is canonically isomorphic to
the orthogonal vector bundle
$TQ^\bot$, $TQ^\omega /TQ$ is isomorphic to $(TQ\oplus J(TQ))^\omega$
and $T_Q M/TQ^\omega$ to $J(TQ)$ so that the decomposition
(\ref{eq:decnuQ}) takes the concrete form
\begin{equation*}
    TQ^\bot = (TQ\oplus J(TQ))^\omega \oplus J(TQ).
\end{equation*}
We will work through this identification.
Fibers of $TQ^\omega/TQ$ are symplectic vector spaces of dimension $2(n-k)$
for the structure induced by $\omega$.
Let $\beta$ be the pull-back of the Lee form to $Q$.
The fiber bundle $T_QM/TQ^\omega$ is diffeomorphic to $T^*_\beta Q$
under $(q,[v])\mapsto \omega_q(v,\cdot)$.
One can check in local coordinates that the direct sum of the fibered symplectic
form and the conformal symplectic form of $T^*_\beta Q$ defines
a $\pi^*\beta$-conformal symplectic form $\omega_{\nu Q}$ such that
$(\nu Q,\eta_{\nu Q}=\pi^*\beta,\omega_{\nu Q})$
is a well-defined conformal symplectic manifold.
Moreover, for all $q\in Q$, $(\omega_{\nu Q})_{(q,0)}=\omega_q$ by definition:
both structures agree along $Q$
(seeing $\nu Q$ as $TQ^\bot$, we recall that
$T_{(q,0)}(TQ^\bot)$ is naturally identified with $T_q Q$).
The structure of $\nu Q$ depends on the isotropic embedding $Q\subset M$,
when $Q$ is Lagrangian $\nu Q\simeq T^*_\beta Q$ only depends on $\eta|_Q$.

\begin{theorem}[Weinstein neighborhood]\label{thm:weinstein}
    Let $Q\subset (M,\eta,\omega)$ be an isotropic submanifold,
    there exist a neighborhood $U_1\subset M$ of $Q$, a neighborhood
    $U_2\subset \nu Q$ of the $0$-section and a conformal symplectomorphism
    $\varphi : (U_1,\eta,\omega)\to (U_2,\eta_{\nu Q},\omega_{\nu Q})$ 
    sending $Q$ to the $0$-section canonically.
\end{theorem}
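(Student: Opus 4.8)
The plan is to run a Moser-type isotopy argument adapted to the conformal setting, replacing the ordinary differential by the Lichnerowicz--de Rham differential $\ud_\eta$. First I would reduce everything to the normal bundle. Using a compatible complex structure $J$ and the identification $\nu Q\cong TQ^\bot$ described above, the tubular neighborhood theorem (with prescribed first jet along $Q$) produces a diffeomorphism $\psi_0$ from a neighborhood of $Q$ in $M$ onto a neighborhood of the $0$-section in $\nu Q$, equal to the identity on $Q$ and whose differential along $Q$ is the canonical identification $T_{(q,0)}(\nu Q)\cong T_q M$. Since $(\omega_{\nu Q})_{(q,0)}=\omega_q$ by construction, the forms $\omega_0:=\omega_{\nu Q}$ and $\omega_1:=(\psi_0^{-1})^*\omega$ agree along the $0$-section, and likewise the closed Lee forms $\eta_0:=\eta_{\nu Q}$ and $\eta_1:=(\psi_0^{-1})^*\eta$ agree there. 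It then suffices to find a diffeomorphism of $\nu Q$ fixing $Q$ that intertwines $(\eta_0,\omega_0)$ and $(\eta_1,\omega_1)$ up to gauge.

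Next I would normalise the Lee form. A tubular neighborhood retracts onto $Q$, so $[\eta_1]=[\eta_0]$ in its first cohomology, and the relative Poincar\'e lemma yields $\eta_1-\eta_0=\ud h$ with $h|_Q=0$ (indeed vanishing to second order along $Q$). The gauge transformation $(\eta_0,\omega_0)\mapsto(\eta_0+\ud h,e^h\omega_0)=(\eta_1,e^h\omega_0)$ replaces the model by a gauge-equivalent one, so after relabelling I may assume $\eta_0=\eta_1=:\eta$, the two $\eta$-closed non-degenerate forms $\omega_0,\omega_1$ still agreeing along $Q$ (as $e^h|_Q=1$). Set $\omega_t:=(1-t)\omega_0+t\omega_1$; these are $\eta$-closed and, agreeing with $\omega_0$ along $Q$, stay non-degenerate on a neighborhood of $Q$ for all $t\in[0,1]$ by openness. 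Because $\omega_1-\omega_0$ is $\ud_\eta$-closed and vanishes along $Q$, the relative $\ud_\eta$-Poincar\'e lemma (reduced to the ordinary one by the local isomorphism $\alpha\mapsto e^{-\theta}\alpha$ with $\ud\theta=\eta$, using $\ud_{\eta}\alpha=e^{\theta}\ud(e^{-\theta}\alpha)$) gives a $1$-form $\sigma$ vanishing along $Q$ with $\ud_\eta\sigma=\omega_1-\omega_0$.

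Then comes the Moser computation. Define $X_t$ by $\iota_{X_t}\omega_t=-\sigma$; it vanishes along $Q$, so its flow $\phi_t$ fixes $Q$ and is defined on a smaller neighborhood for $t\in[0,1]$. Using $\ud\omega_t=\eta\wedge\omega_t$ one finds
\begin{equation*}
\cL_{X_t}\omega_t=\ud(-\sigma)+\iota_{X_t}(\eta\wedge\omega_t)=-\ud_\eta\sigma+\eta(X_t)\,\omega_t,
\end{equation*}
so that $\cL_{X_t}\omega_t+\dot\omega_t=\eta(X_t)\,\omega_t$ and hence $\tfrac{\ud}{\ud t}(\phi_t^*\omega_t)=(\eta(X_t)\circ\phi_t)\,\phi_t^*\omega_t$. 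This scalar linear ODE integrates to $\phi_1^*\omega_1=e^{f}\omega_0$ with $f:=\int_0^1\eta(X_t)\circ\phi_t\,\ud t$, while the parallel computation $\tfrac{\ud}{\ud t}\phi_t^*\eta=\phi_t^*\cL_{X_t}\eta=\ud(\eta(X_t)\circ\phi_t)$ gives $\phi_1^*\eta=\eta+\ud f$; since $X_t|_Q=0$ one has $f|_Q=0$. Thus $\phi_1$ is a conformal symplectomorphism fixing $Q$, and $\varphi:=\phi_1^{-1}\circ\psi_0$, together with the gauge change made above, is the required map sending $Q$ to the $0$-section canonically.

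The genuinely conformal feature is the identity $\cL_{X_t}\omega_t=-\ud_\eta\sigma+\eta(X_t)\omega_t$: the leftover term $\eta(X_t)\omega_t$ is precisely what turns the Moser flow into a conformal rather than a strict symplectomorphism, and it is the same mechanism that produced the factor $e^{r_t}$ in Lemma~\ref{lem:rt}. I expect the main obstacle to be the bookkeeping in the two relative Poincar\'e lemmas, namely guaranteeing that the primitives $h$ and $\sigma$ vanish along $Q$ so that the isotopy fixes $Q$ pointwise and every intermediate $\omega_t$ remains non-degenerate near $Q$; the rest is the standard argument, with $\ud_\eta$ in place of $\ud$.
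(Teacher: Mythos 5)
Your overall architecture matches the paper's proof: reduce to the normal bundle, gauge-normalize the Lee form, produce a relative $\ud_\eta$-primitive of $\omega_1-\omega_0$ vanishing along $Q$, then run a Moser argument (the paper outsources this last step to the Chantraine--Murphy stability theorem, Theorem~\ref{thm:stability}, whereas you redo it by hand; your computation $\cL_{X_t}\omega_t+\dot\omega_t=\eta(X_t)\,\omega_t$, the resulting conformal factor $e^{f}$, and the remark that $X_t|_Q=0$ keeps the time-one flow defined near $Q$ are all correct). However, there is a genuine gap at the key step. Your justification of the relative $\ud_\eta$-Poincar\'e lemma --- ``reduced to the ordinary one by the local isomorphism $\alpha\mapsto e^{-\theta}\alpha$ with $\ud\theta=\eta$'' --- requires a primitive $\theta$ of $\eta$ on the \emph{whole} tubular neighborhood. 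After your normalization the common Lee form is $\eta_{\nu Q}=\pi^*\beta$, where $\beta$ is the pull-back of $\eta$ to $Q$, and $\pi^*\beta$ is exact on the neighborhood if and only if $\beta$ is exact on $Q$. This fails exactly in the situations the theorem is invoked for: in Proposition~\ref{prop:attractingPeriodicOrbits} it is applied to an embedded loop $\gamma$ with $\int_\gamma\eta<0$. When no global $\theta$ exists, local primitives of the twisted forms cannot be patched together (a partition of unity produces uncontrolled error terms $\sum_i\ud\chi_i\wedge\sigma_i$), so your construction of $\sigma$ collapses precisely in the non-symplectic case of interest.

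The statement you need is nevertheless true, and the correct argument is the one in the paper's Lemma~\ref{lem:extension}: with $\eta=\pi^*\beta$, apply the homotopy operator of the radial contraction $\phi_t:\exp_q(v)\mapsto\exp_q(tv)$. Its generating field $X_t$ is vertical, so $\ud\pi\cdot X_t=0$, hence $\eta(X_t)=0$ and $\phi_t^*\eta=\eta$; consequently
\begin{equation*}
    \frac{\ud}{\ud t}\left(\phi_t^*\tau\right)
    =\phi_t^*\bigl(\ud_\eta(\iota_{X_t}\tau)+\eta(X_t)\tau\bigr)
    =\ud_\eta\bigl(\phi_t^*(\iota_{X_t}\tau)\bigr),
\end{equation*}
and $\sigma:=\int_0^1\phi_t^*(\iota_{X_t}\tau)\,\ud t$ is a global primitive vanishing along $Q$, using $\phi_0^*\tau=0$ because $\tau$ vanishes pointwise along $Q$. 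In other words, the fix is the same verticality mechanism that kills the correction term in your own Moser computation; once you replace the twisting argument by it, your proof becomes essentially the paper's (Lemma~\ref{lem:extension} plus an inlined proof of the stability theorem). A minor further slip: the Lee forms $\eta_1=(\psi_0^{-1})^*\eta$ and $\eta_0=\pi^*\beta$ do \emph{not} agree along $Q$ --- they agree on $TQ$ but can differ on normal directions (take $Q$ a point with $\eta_q\neq 0$) --- so your $h$ need not vanish to second order; this is harmless, since your gauge step only uses $h|_Q=0$, which does hold after subtracting a locally constant function.
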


The proof is an adaptation of the symplectic case.
We will use the following stability theorem proven by Chantraine-Murphy.

\begin{theorem}[{\cite[Theorem~2.10]{ChantraineMurphy2019}}]\label{thm:stability}
    Let $(M,\eta)$ be a closed manifold endowed with a closed $1$-form
    and let $(\omega_t)_{t\in[0,1]}$ be a path of $\eta$-conformal symplectic
    forms such that $\omega_t = \omega_0 +\ud_\eta\lambda_t$.
    There exists an isotopy $\varphi_t:M\to M$ and functions
    $f_t:M\to\R$, $t\in[0,1]$, such that $\varphi_t^*\eta=\eta+\ud f_t$
    and $\varphi_t^*\omega_t = e^{f_t}\omega_0$.
\end{theorem}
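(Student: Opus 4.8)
The plan is to run Moser's homotopy method, adapted to the Lichnerowicz--de Rham differential $\ud_\eta$, producing the isotopy $(\varphi_t)$ and the functions $(f_t)$ simultaneously. I would seek $(\varphi_t)$ as the flow of a time-dependent vector field $X_t$, so that $\frac{\ud}{\ud t}\varphi_t = X_t\circ\varphi_t$ with $\varphi_0=\id$, and determine $f_t$ by an auxiliary scalar ODE along the way. Since $M$ is closed, any smooth family $X_t$ integrates to an isotopy defined for all $t\in[0,1]$, so the only real work is to choose $X_t$ and $f_t$ so that both target identities hold at once.

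First I would record the \emph{conformal Cartan identity}. Because each $\omega_t$ is $\eta$-closed, one has $\ud\omega_t = \eta\wedge\omega_t$, so for any vector field $X$,
\[
\cL_X\omega_t = \ud\iota_X\omega_t + \iota_X(\eta\wedge\omega_t)
= \ud\iota_X\omega_t + \eta(X)\,\omega_t - \eta\wedge\iota_X\omega_t,
\]
which rearranges to $\cL_X\omega_t - \eta(X)\,\omega_t = \ud_\eta(\iota_X\omega_t)$. Differentiating $\omega_t=\omega_0+\ud_\eta\lambda_t$ gives $\dot\omega_t = \ud_\eta\dot\lambda_t$. Now set $\Psi_t:=\varphi_t^*\omega_t$; the desired conformal identity is equivalent to the linear scalar ODE $\dot\Psi_t = (\eta(X_t)\circ\varphi_t)\,\Psi_t$, and since $\dot\Psi_t=\varphi_t^*(\cL_{X_t}\omega_t+\dot\omega_t)$, removing the invertible $\varphi_t^*$ shows this is equivalent to $\cL_{X_t}\omega_t+\dot\omega_t=\eta(X_t)\,\omega_t$, i.e. to $\ud_\eta(\iota_{X_t}\omega_t+\dot\lambda_t)=0$. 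Hence it suffices to impose the stronger pointwise condition $\iota_{X_t}\omega_t = -\dot\lambda_t$, which has a unique smooth solution $X_t$ by nondegeneracy of $\omega_t$.

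With this $X_t$ fixed, I would define $f_t$ by the ODE $\dot f_t = \eta(X_t)\circ\varphi_t$ with $f_0=0$. The Lee identity follows by differentiation: since $\eta$ is closed, $\cL_{X_t}\eta = \ud(\eta(X_t))$, so $\frac{\ud}{\ud t}\varphi_t^*\eta = \varphi_t^*\ud(\eta(X_t)) = \ud(\eta(X_t)\circ\varphi_t) = \ud\dot f_t$; integrating from $\varphi_0^*\eta=\eta$ gives $\varphi_t^*\eta = \eta + \ud f_t$. For the conformal identity, the computation of the previous paragraph now reads $\dot\Psi_t = \dot f_t\,\Psi_t$, and integrating this pointwise linear ODE from $\Psi_0=\omega_0$ yields $\Psi_t = e^{f_t}\omega_0$, as required.

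The tracking of the single scalar $f_t$ is the only genuinely conformal subtlety, and it is exactly what makes the argument go through: the definition $\dot f_t = \eta(X_t)\circ\varphi_t$ is \emph{forced} by the Lee equation, while the same quantity is precisely the logarithmic conformal factor appearing in the Moser equation for $\omega_t$, so one function governs both identities simultaneously. The main points to verify carefully are therefore the conformal Cartan identity above and the resulting reduction to $\iota_{X_t}\omega_t=-\dot\lambda_t$; once $M$ is compact and the $\omega_t$ are nondegenerate, the integration of $X_t$ and of the scalar ODE for $f_t$ is entirely routine.
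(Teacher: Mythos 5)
Your proof is correct, but note that the paper itself contains no proof of this statement to compare against: it is quoted verbatim from Chantraine--Murphy \cite[Theorem~2.10]{ChantraineMurphy2019} and used as a black box (in Lemma~\ref{lem:extension} and Theorem~\ref{thm:weinstein}). Your argument is the standard Moser deformation method adapted to the Lichnerowicz--de Rham differential, which is essentially the argument of the cited source: the conformal Cartan identity $\cL_X\omega_t - \eta(X)\,\omega_t = \ud_\eta(\iota_X\omega_t)$, the reduction to the pointwise equation $\iota_{X_t}\omega_t = -\dot\lambda_t$ (solvable by nondegeneracy), and the single scalar $f_t$ determined by $\dot f_t = \eta(X_t)\circ\varphi_t$ serving simultaneously as the Lee-form correction and the logarithmic conformal factor are all in order, with closedness of $M$ guaranteeing the flow exists on $[0,1]$. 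The only implicit hypothesis you rely on is that $t\mapsto\lambda_t$ is smooth (needed for $\dot\omega_t=\ud_\eta\dot\lambda_t$), which is the natural reading of the statement and holds in the paper's application, where $\lambda_t = t\sigma$.
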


Let us first prove the following conformal extension of
\cite[Lemma~3.14]{McDuffSalamon1998}.

\begin{lemma}\label{lem:extension}
    Let $(M,\eta)$ be a manifold endowed with a closed $1$-form
    and $Q\subset M$ be a compact submanifold.
    Let us assume that there exist two $\eta$-conformal symplectic
    forms $\omega_0$ and $\omega_1$ agreeing on $T_qM$ for all $q\in Q$.
    There exist two neighborhoods $U_0$ and $U_1$ of $Q$, a diffeomorphism
    $\psi:U_0\to U_1$ and a map $f:U_0\to\R$ vanishing on $Q$ such that
    $\psi|_Q = \id$, $\psi^*\omega_1 = e^f\omega_0$ and $\psi^*\eta=\eta+\ud f$.
\end{lemma}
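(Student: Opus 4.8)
The plan is to run a Moser-type deformation argument relative to $Q$, adapting the proof of Theorem~\ref{thm:stability} to the germ of $M$ along the compact submanifold $Q$. First I would interpolate linearly, setting $\omega_t := (1-t)\omega_0 + t\omega_1$ for $t\in[0,1]$. Since $\ud_\eta$ is linear, each $\omega_t$ is $\eta$-closed; and because $\omega_0$ and $\omega_1$ agree on $T_qM$ for every $q\in Q$, one has $\omega_t|_{T_qM} = \omega_0|_{T_qM}$, which is nondegenerate. As nondegeneracy is an open condition and $Q$ is compact, there is a neighborhood $U$ of $Q$ on which all the $\omega_t$ are conformal symplectic. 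Thus $(\omega_t)$ is a path of $\eta$-conformal symplectic forms on $U$ joining $\omega_0$ to $\omega_1$, and $\dot\omega_t = \omega_1-\omega_0 =: \tau$ is an $\eta$-closed $2$-form vanishing on $T_qM$ for $q\in Q$.

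The crucial step is a relative Lichnerowicz--Poincar\'e lemma: I claim $\tau = \ud_\eta\sigma$ for some $1$-form $\sigma$ vanishing on $Q$. This is where the twisting by $\eta$ must be controlled and is the main obstacle, since $\eta$ need not be exact on $U$. I would first reduce to the case where $\eta$ is basic. Identifying $U$ with a tubular neighborhood $p\colon \nu Q\to Q$ and writing $i\colon Q\hookrightarrow U$ for the zero section, the maps $p$ and $i$ are homotopy inverse, so $\eta$ and $p^*i^*\eta$ are cohomologous: $\eta = \eta' + \ud g$ with $\eta' := p^*(i^*\eta)$ and $g|_Q = 0$. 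Since $\ud_\eta\alpha = e^{g}\ud_{\eta'}(e^{-g}\alpha)$, solving $\ud_\eta\sigma = \tau$ is equivalent to solving $\ud_{\eta'}\tilde\sigma = \tilde\tau$ with $\tilde\tau := e^{-g}\tau$ and $\sigma := e^{g}\tilde\sigma$; here $\tilde\tau$ is $\ud_{\eta'}$-closed and vanishes along $Q$. Now the fiber-scaling isotopy $\phi_s(q,v) = (q,sv)$ has vertical generator $Y_s$, and as $\eta'$ is basic we get $\eta'(Y_s)=0$ and $\phi_s^*\eta' = \eta'$; hence $\phi_s^*$ commutes with $\ud_{\eta'}$ and the twisted Cartan identity $\cL_{Y_s} = \ud_{\eta'}\iota_{Y_s} + \iota_{Y_s}\ud_{\eta'} + \eta'(Y_s)$ loses its zeroth-order term. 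The standard homotopy computation then gives
\[
\tilde\tau = \phi_1^*\tilde\tau - \phi_0^*\tilde\tau = \ud_{\eta'}\!\left(\int_0^1 \phi_s^*\,\iota_{Y_s}\tilde\tau\,\ud s\right),
\]
the term $\phi_0^*\tilde\tau$ vanishing because $\tilde\tau=0$ along $Q$, and the integrand being integrable at $s=0$ by the usual estimate since $\tilde\tau$ vanishes on the zero section. Setting $\tilde\sigma := \int_0^1\phi_s^*\iota_{Y_s}\tilde\tau\,\ud s$ (which vanishes on $Q$, as $Y_s$ vanishes there) and $\sigma := e^{g}\tilde\sigma$ produces the desired primitive.

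Finally I would run the Moser trick. Define $X_t$ by $\iota_{X_t}\omega_t = -\sigma$; since $\omega_t$ is nondegenerate this determines $X_t$ uniquely, and $X_t|_Q = 0$ because $\sigma|_Q = 0$, so after shrinking $U$ the flow $\varphi_t$ is defined for $t\in[0,1]$ and fixes $Q$ pointwise. From $\ud\omega_t = \eta\wedge\omega_t$ one computes $\cL_{X_t}\omega_t = \ud_\eta(\iota_{X_t}\omega_t) + \eta(X_t)\,\omega_t = -\ud_\eta\sigma + \eta(X_t)\,\omega_t$, whence, using $\dot\omega_t = \tau = \ud_\eta\sigma$,
\[
\tfrac{\ud}{\ud t}\varphi_t^*\omega_t = \varphi_t^*\big(\dot\omega_t + \cL_{X_t}\omega_t\big) = \varphi_t^*\big(\eta(X_t)\,\omega_t\big) = \big(\eta(X_t)\circ\varphi_t\big)\,\varphi_t^*\omega_t.
\]
Integrating the scalar ODE $\dot g_t = \eta(X_t)\circ\varphi_t$ with $g_0 = 0$ yields $\varphi_t^*\omega_t = e^{g_t}\omega_0$; and since $\eta$ is closed, $\cL_{X_t}\eta = \ud(\eta(X_t))$, so $\tfrac{\ud}{\ud t}\varphi_t^*\eta = \ud\dot g_t$ and $\varphi_t^*\eta = \eta + \ud g_t$. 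Because $X_t$ vanishes along the fixed locus $Q$, each $g_t$ vanishes on $Q$. Taking $\psi := \varphi_1$, $f := g_1$, $U_0 := U$ and $U_1 := \psi(U_0)$ finishes the proof. The delicate point throughout is the second paragraph: arranging $\eta'(Y_s)=0$ by the gauge reduction is exactly what makes the Lichnerowicz homotopy formula close up, and everything downstream is the usual Moser bookkeeping.
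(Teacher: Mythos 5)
Your proof is correct and follows essentially the same route as the paper's: gauge reduction to a basic Lee form on a tubular neighborhood, the fiber-scaling (radial) homotopy formula to produce a $\ud_\eta$-primitive $\sigma$ of $\omega_1-\omega_0$ vanishing on $Q$, and a Moser deformation. The only difference is that the paper delegates the final Moser step to the stability theorem of Chantraine--Murphy (Theorem~\ref{thm:stability}), whereas you carry it out by hand --- which has the mild advantage of making explicit why the flow exists up to time $1$ near $Q$ (the Moser field vanishes along $Q$), a point left implicit in the paper since Theorem~\ref{thm:stability} is stated for closed manifolds.
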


\begin{proof}[Proof of Lemma~\ref{lem:extension}]
    Let us endow $M$ with a Riemannian metric and let us define a
    tubular neighborhood $U$ of $Q$ as the image under the diffeomorphism
    $(q,v)\mapsto \exp_q(v)$ of a neighborhood of the $0$-section of the normal
    bundle of $Q$.
    Let $\pi:U\to Q$ be the orthogonal projection.
    Since $\pi$ is a retraction by deformation, $\eta|_U$ is cohomologous
    to $\pi^*\beta$ where $\beta$ is the pull-back of $\eta$ to $Q$.
    One can assume $\eta = \pi^*\beta$:
    indeed, $\eta = \pi^*\beta +\ud g$ with $g:U\to\R$ vanishing on $Q$
    so one can do a gauge transformation.

    The remainder of the proof closely follows the non-conformal one
    \cite[Lemma~3.14]{McDuffSalamon1998}.
    We set $\tau:=\omega_1-\omega_0$ in order to show that $\tau=\ud_\eta\sigma$
    for some $\sigma$ and apply Theorem~\ref{thm:stability} with $\lambda_t =
    t\sigma$. In order to prove that, we consider the map
    $\phi_t : \exp_q(v)\mapsto\exp_q(tv)$ defined on $U$ for $t\in[0,1]$
    so that $\phi_0=\pi$, $\phi_1=\id$ and $\tau=\phi_1^*\tau-\phi_0^*\omega$
    ($\pi^*\omega_0 = \pi^*\omega_1$ by assumption).
    Let us set $X_t = \partial_t\phi_t\circ\phi_t^{-1}$,
    well-defined for $t\in (0,1]$. Then
    \begin{equation*}
        \frac{\ud}{\ud t}(\phi_t^*\tau) = \phi_t^*(
        \ud_\eta(\iota_{X_t}\tau) + \pi^*\beta(X_t)\tau)
        = \phi_t^*(\ud_\eta(\iota_{X_t}\tau)),
    \end{equation*}
    since $\ud\pi\cdot X_t = 0$.
    We remark that $\sigma_t := \phi_t^*(\iota_{X_t}\tau)$ is smoothly
    defined for $t\in[0,1]$ ($X_0$ is not well-defined but
    $X_0\circ\phi_0$ is). We conclude by setting $\sigma = \int_0^1\sigma_t\ud t$.
\end{proof}

\begin{proof}[Proof of Theorem~\ref{thm:weinstein}]
    Let us restrict ourselves to a tubular neighborhood $V\simeq \nu Q$
    of $Q\subset M$.
    Under the gauge change $\eta = \pi^*\beta$ for the structure
    $(\eta,\omega)|_V$, we are under the assumptions of Lemma~\ref{lem:extension}
    with $(V,\pi^*\beta)$, $(\eta,\omega)|_V$, and $(\eta_{\nu Q},\omega_{\nu Q})$.
\end{proof}

\bibliography{_biblio}
\bibliographystyle{amsplain}

\end{document}